\documentclass[12pt]{amsart}
\textheight22.5cm \textwidth14.4cm
\topmargin-0.2cm
\evensidemargin.7cm \oddsidemargin.7cm

\usepackage{amssymb}
\usepackage{amsmath}
\usepackage{latexsym}
\def\eps{\varepsilon}
\def\R{\mathbb R}
\def\C{\mathbb C}
\def\RR{\overline{{\mathbb R}^n}}
\def\N{\mathbb N}
\def\Z{\mathbb Z}
\def\dist{\operatorname{dist}}
\def\card{\operatorname{card}}
\def\capacity{\operatorname{cap}}
\def\deg{\operatorname{deg}}
\def\dim{\operatorname{dim}}
\def\diam{\operatorname{diam}}
\def\Real{\operatorname{Re}}
\def\Imag{\operatorname{Im}}
\def\qr{quasi\-regular }
\newtheorem{lemma}{Lemma}[section]
\newtheorem{theorem}{Theorem}[section]
\newtheorem{cor}{Corollary}[section]

\theoremstyle{definition}
\newtheorem{definition}{Definition}[section]

\newtheorem{example}{Example}[section]
\theoremstyle{remark}
\newtheorem{remark}{Remark}[section]

\numberwithin{equation}{section}
\begin{document}

\title[Iteration of entire quasiregular maps]{Foundations for an iteration theory of entire quasiregular maps}

\author{Walter Bergweiler}
\address{Mathematisches Seminar,
Christian--Albrechts--Universit\"at zu Kiel,
Lude\-wig--Meyn--Str.~4,
D--24098 Kiel,
Germany}
\email{bergweiler@math.uni-kiel.de}
\author{Daniel A. Nicks}
\address{School of Mathematical Sciences, University of Nottingham, Nottingham NG7 2RD, UK}
\email{Dan.Nicks@nottingham.ac.uk}
\thanks{The first author is
supported by the Deutsche Forschungsgemeinschaft, Be 1508/7-2 and
the ESF Networking Programme HCAA \\ \emph{Mathematics Subject Classification:} Primary 37F10; Secondary 30C65, 30D05}

\begin{abstract}
The Fatou-Julia iteration theory of rational functions has
been extended to quasiregular mappings in higher dimension
by various authors.
The purpose of this paper is an analogous extension
of the iteration theory of transcendental entire functions.
Here the Julia set is defined as the set of all points such that complement of
the forward orbit of any neighbourhood has capacity zero.
It is shown that for maps which are not of polynomial type the Julia
set is non-empty and has many properties of the classical Julia
set of transcendental entire functions.
\end{abstract}
\maketitle

\section{Introduction and main results}\label{Introduction}
In 1918-20, Fatou~\cite{Fatou19} and Julia~\cite{Julia18} wrote long memoirs on the iteration of rational functions and thereby created the field now known as complex dynamics. The analogies (and differences) that arise in the corresponding theory for transcendental entire functions were studied by Fatou~\cite{Fatou26} in 1926.

Many of the results of the Fatou-Julia theory for rational functions, considered as self-maps of the Riemann sphere $S^2$, have been extended to uniformly quasiregular self-maps of the $n$-sphere $S^n$ where $n\ge 2$ by Hinkkanen, Iwaniec, Martin, Mayer and others; see \cite[Chapter~21]{Iwaniec01}, \cite[Chapter~4]{Siebert04},  and \cite[Section~4]{Bergweiler10}
 for surveys. Here a quasiregular map $f\colon S^n\to S^n$ is called \emph{uniformly quasiregular} if there exists a uniform bound on the dilatation of the iterates $f^k$ of~$f$. (We will recall the definition of quasiregularity, in particular the notions of dilatation and inner dilatation, in section~\ref{quasiregular}.) In principle, it would also be possible to extend some of Fatou's results about transcendental entire functions $f\colon \C\to\C$ to uniformly quasiregular maps $f\colon \R^n\to\R^n$ where $n\ge 2$. However, for $n\ge 3$ no examples of such maps with an essential singularity at $\infty$ are known yet.

On the other hand, the iteration of quasiregular analogues of the exponential function (called the Zorich map) and the trigonometric functions were studied in~\cite{Bergweiler10a,Bergweiler11,FletcherSine}. Also~\cite{Bergweiler09} contains a general result about the \emph{escaping set}
\[I(f)=\{x\in \R^n:f^k(x)\to \infty \text{ as } k\to \infty\}\]
of a quasiregular map $f\colon \R^n\to\R^n$.

However, no systematic theory in the spirit of Fatou and Julia has been developed yet for quasiregular self-maps of $\R^n$. It
is the purpose of this paper to do precisely this. Here we build on~\cite{FJ}, which is concerned with a Fatou-Julia theory for (non-uniformly) quasiregular self-maps of $S^n$. The results in~\cite{FJ} are in turn inspired by results of Sun and Yang~\cite{Sun99,Sun00,Sun01}
dealing with the case $n=2$.

We call \qr self-maps of $\R^n$ \emph{entire \qr maps}. Such a map $f$ is said to be of \emph{polynomial type} if
$\lim_{x\to \infty}|f(x)|=\infty$ and of \emph{transcendental type} if
$\lim_{x\to\infty}|f(x)|$ does not exist.
Here and in the following $|y|$ denotes the Euclidean norm of a point
$y\in\R^n$.

The iteration of entire \qr maps of polynomial type was studied in~\cite{Fletcher11}. Such maps extend to \qr self-maps of the one-point-compactifi\-cation $\R^n\cup\{\infty\}$ of $\R^n$, which we identify with the $n$-sphere $S^n$ via stereographic projection.
Hence entire \qr maps of polynomial type are also covered by the results in~\cite{FJ}. Thus we will mainly restrict to entire \qr maps of transcendental type.

The capacity of a condenser, and the distinction between sets of positive capacity and sets of zero capacity, plays an important role in the theory of \qr maps; see section~\ref{quasiregular} for the definitions.
We write $\capacity C=0$ if $C$ is a set of capacity zero
and $\capacity C>0$ otherwise.
The following definition is the same as in~\cite[Definition~1.1.]{FJ}.

\begin{definition} \label{D1a}Let $f\colon \R^n\to\R^n$ be quasiregular. Then the \emph{Julia set} $J(f)$ of $f$ is defined to be the set of all $x\in \R^n$ such that
\begin{equation} \label{1a}
\capacity\!\left(\R^n\backslash \bigcup\limits^\infty_{k=1}f^k(U)\right)=0
\end{equation}
for every neighbourhood $U$ of~$x$.
\end{definition}
It was shown in~\cite{FJ} that if $f$ is of polynomial type and the degree of $f$ is larger than the inner dilatation of~$f$, then $J(f)$ is not empty and has many properties of the classical Julia set. Also, with this hypothesis the above definition agrees with the classical one for uniformly \qr maps and thus in particular for polynomials. These results also hold in the current setting:
\begin{theorem} \label{T1a}Let $f$ be an entire \qr map of transcendental type. Then $J(f)\neq \emptyset$. In fact, $\card J(f)=\infty$.
\end{theorem}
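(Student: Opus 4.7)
Since quasiregular maps are open, if $V$ is a neighbourhood of $y$ then $f(V)$ is a neighbourhood of $f(y)$; so if $f(y)\in J(f)$, applying the defining condition~\eqref{1a} at $f(y)$ to the neighbourhood $f(V)$ yields
\[
\capacity\!\left(\R^n\setminus\bigcup_{k\ge 1}f^k(V)\right) \le \capacity\!\left(\R^n\setminus\bigcup_{k\ge 0}f^k(f(V))\right) = 0,
\]
hence $y\in J(f)$. Thus $f^{-1}(J(f))\subset J(f)$, and $J(f)$ is closed directly from its definition; this mirrors the corresponding property from \cite{FJ}. It remains to exhibit a single Julia point outside an exceptional set; backward invariance together with a quasiregular Picard theorem will then multiply it to infinitely many.

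For the single Julia point I would appeal to the quasiregular big Picard theorem of Rickman and Miniowitz for maps with essential singularity at $\infty$: there is a finite exceptional set $\mathcal{E}\subset\R^n$ such that $f(\{x:|x|>R\})\supset\R^n\setminus\mathcal{E}$ for every $R>0$, and each value outside $\mathcal{E}$ has infinitely many $f$-preimages accumulating at $\infty$. Using this, I would build a Julia point by a nested-preimage scheme: choose a closed ball $B_0\subset\R^n\setminus\mathcal{E}$, and at each stage pick a branch of $f^{-1}$ on $B_{k-1}$ taking values at very large norm, so that quasiregular distortion estimates for a single map force the preimage $B_k\subset B_{k-1}$ to have diameter at most $2^{-k}$. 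Arranging the branches to nest---for instance by basing the construction at a repelling periodic orbit, or at a suitably chosen escaping orbit supplied by $I(f)\ne\emptyset$ (\cite{Bergweiler09})---gives $B_0\supset B_1\supset B_2\supset\cdots$ with $f^k(B_k)\supset B_0$ and $\diam B_k\to 0$. Let $\{x^\star\}=\bigcap_k B_k$. For any neighbourhood $U$ of $x^\star$ we have $B_k\subset U$ for large $k$, so $f^k(U)\supset B_0$; and arranging $B_0$ so that some further iterate of $f$ sends it onto a set containing $\{|x|>R\}$, one more application of $f$ covers $\R^n\setminus\mathcal{E}$. Since $\mathcal{E}$ is finite and $n\ge 2$, it has capacity zero, and therefore $x^\star\in J(f)$.

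By the freedom in the construction $x^\star$ can be taken outside $\mathcal{E}$, so $f^{-1}(x^\star)$ is infinite by the Picard-type theorem; backward invariance then places this full preimage inside $J(f)$, and $\card J(f)=\infty$ follows.

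The main obstacle is producing the single Julia point. Montel's theorem on the family $\{f^k\}$ cannot be invoked since the dilatation of $f^k$ grows like $K(f)^k$ and is unbounded. The nested-preimage construction must therefore rely on distortion estimates for individual quasiregular maps, and the geometric nesting $B_{k+1}\subset B_k$ has to be reconciled with the requirement of pushing each successive preimage far out in $\R^n$ so that the corresponding branch of $f^{-1}$ is contracting enough. It is precisely the transcendental-type hypothesis, via the Picard-type abundance of faraway preimages, that makes this reconciliation possible.
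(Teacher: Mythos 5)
Your reduction of the problem to finding a single Julia point (via backward invariance and Rickman's Picard theorem) is fine, but the construction of that single point does not work as described, and this is precisely where the whole difficulty of the theorem lies. First, the key claim that one can choose preimage components $B_k\subset B_{k-1}$ with $\diam B_k\le 2^{-k}$ "by quasiregular distortion estimates for a single map" is unsupported: there is no contraction principle for inverse branches of one quasiregular map (indeed single-valued branches of $f^{-1}$ need not exist off the branch set, and a component of $f^{-1}(B_{k-1})$ can be huge or even unbounded, as already happens for $e^z$ and a disc about the omitted value), and the alternative anchors you mention are unavailable: repelling periodic points are not known to exist at this stage (Siebert's theorem gives periodic points, not repelling ones), and an escaping orbit gives no nesting inside a fixed ball. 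You yourself flag this tension between nesting in $B_0$ and "pushing the preimages far out", but the proposal offers no way to resolve it. Second, even granting the nested scheme, the final step is false: you ask that some iterate of $f$ map $B_0$ onto a set containing $\{|x|>R\}$, but $B_0$ is compact and $f$ is an entire map, so every $f^j(B_0)$ is compact and can never contain a neighbourhood of infinity. Consequently the argument never reaches the conclusion $\capacity\bigl(\R^n\backslash\bigcup_k f^k(U)\bigr)=0$; knowing only that $f^k(U)\supset B_0$ for large $k$ is far weaker than \eqref{1a}.

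For comparison, the paper's proof uses a mechanism of a quite different nature, and it is this mechanism that actually exploits the capacity definition quantitatively. One rescales, $g_m(x)=f(R_mx)/R_m$, and splits into two cases according to the pits effect. In either case one produces many disjoint domains each of whose images under $g_m$ (respectively, proper maps $G$ of large degree $D$ onto a fixed ball) cover many of the others, so that the averaged counting function of $g_m^k$ (or $G^k$) on these domains grows like $(N-q)^k$ (Miniowitz's normal-family analogue of Rickman's theorem) or $D^k$. If a closure of one of these domains missed the Julia set, the forward orbit of a small ball would omit a set of positive capacity, and Lemma~\ref{lemma2c} would force the counting function to grow at most like $K_I(f)^k$; choosing $N>K_I(f)+q$ (or using $D>K_I(f)$) gives the contradiction, and the number of such domains tends to infinity, giving $\card J(f)=\infty$ directly rather than via backward invariance. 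Some comparison of this kind between a covering-degree growth rate and the inner dilatation is what your plan is missing.
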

Here $\card X$ denotes the cardinality of a set~$X$.
\begin{theorem}\label{T1b}
Let $f\colon \C\to \C$ be a transcendental entire function. Then the classical definition of $J(f)$ using non-normality agrees with the one given in Definition~\ref{D1a}.
\end{theorem}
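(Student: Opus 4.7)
The plan is to prove the two inclusions separately; write $J_{\mathrm{cl}}(f)$ for the classical Julia set defined via non-normality of $\{f^k\}$.

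For $J_{\mathrm{cl}}(f) \subseteq J(f)$, fix $x \in J_{\mathrm{cl}}(f)$ and a neighbourhood $U$ of $x$. The family $\{f^k\}$ is not normal on $U$, and since each iterate is entire it automatically omits the value $\infty$, so Montel's theorem forces at most one finite value to be omitted on $U$ by the whole family. This is the classical blowing-up property: $\C \setminus \bigcup_k f^k(U)$ contains at most one point, hence has capacity zero, which verifies \eqref{1a}. Therefore $x \in J(f)$.

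For the reverse inclusion I argue contrapositively. Suppose $x$ lies in the classical Fatou set, so $x$ is contained in some Fatou component $V$. The Fatou set is forward invariant, so $f^k(V)\subseteq \C\setminus J_{\mathrm{cl}}(f)$ for all $k$. Hence for any neighbourhood $U\subseteq V$ of $x$,
\[
\C \setminus \bigcup_{k=1}^{\infty} f^k(U) \supseteq J_{\mathrm{cl}}(f).
\]
It therefore suffices to verify that $\capacity J_{\mathrm{cl}}(f) > 0$ to conclude $x \notin J(f)$.

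The positive-capacity step is the real content of the theorem. Here I would appeal to known results in transcendental dynamics to conclude that $J_{\mathrm{cl}}(f)$ has positive Hausdorff dimension (indeed at least $1$, by work of Baker, Stallard and Rippon--Stallard). A subset of $\C$ of positive Hausdorff dimension carries a Frostman measure of finite logarithmic energy, hence has positive logarithmic capacity, which in dimension two coincides with the conformal capacity used in Definition~\ref{D1a}.

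The main obstacle is precisely this last step: since polar subsets of $\C$ can be uncountable or even perfect, the density of repelling periodic points in $J_{\mathrm{cl}}(f)$ does not on its own imply positive capacity, and one cannot avoid some genuinely dynamical input. A self-contained alternative to citing the Hausdorff-dimension result would be to construct an invariant Cantor subset of $J_{\mathrm{cl}}(f)$ near a repelling periodic point via Koenigs linearisation and iterated inverse branches, with explicit contraction ratios yielding a lower bound for its logarithmic capacity.
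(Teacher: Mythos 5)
Your proposal is correct and follows essentially the same route as the paper: Montel's blow-up property gives $J_{\mathrm{cl}}(f)\subseteq J(f)$, and the reverse inclusion is reduced (in the paper directly, in your version contrapositively, which is equivalent) to showing $\capacity J_{\mathrm{cl}}(f)>0$. The paper handles that capacity step a bit more directly, citing Baker's theorem that $J_{\mathrm{cl}}(f)$ contains non-degenerate continua and then Lemma~\ref{lemma2h}, so your detour through Hausdorff dimension at least one, Frostman measures and the equivalence of logarithmic and conformal capacity, while valid, is not needed.
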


As in the classical case, it is easily seen that $J(f)$ is completely invariant; that is, $f(x)\in J(f)$ if and only if $x\in J(f)$. We also note that if $\phi\colon \R^n\to \R^n$ is a quasiconformal homeomorphism and $g=\phi \circ f\circ \phi^{-1}$, then $J(g)=\phi(J(f))$.

Besides the escaping set $I(f)$ we also consider the set
\[BO(f)=\{x\in \R^n:(f^k(x)) \text{ is bounded}\}\]
of points with bounded orbits. For polynomials this set is called the \emph{filled Julia set} and is usually
denoted by $K(f)$, but we reserve the notation $K(f)$ for the dilatation. For polynomials and transcendental entire functions we have~\cite{Eremenko89}
$$J(f)=\partial I(f)=\partial BO(f).$$
This does not hold in the present context, as Examples~\ref{Ex7a} and~\ref{Ex7b} will show that we may have $(\partial I(f)\cap \partial BO(f))\backslash J(f)\neq \emptyset$. However, we have the following result.
\begin{theorem}\label{T1c}
Let $f$ be an entire \qr map of transcendental type. Then $J(f)\subset \partial I(f)\cap\partial BO(f)$.
\end{theorem}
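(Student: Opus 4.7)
Fix $x\in J(f)$. The plan is to show that for every open neighbourhood $U$ of $x$, both $U\cap I(f)\ne\emptyset$ and $U\cap BO(f)\ne\emptyset$. Since $I(f)\cap BO(f)=\emptyset$, this yields $x\in\overline{I(f)}\cap\overline{BO(f)}\subset\partial I(f)\cap\partial BO(f)$, because $\overline{I(f)}\subset\overline{\R^n\setminus BO(f)}$ and symmetrically. To establish the two intersection statements, observe first that both $I(f)$ and $BO(f)$ are completely invariant, since prepending or deleting the initial term of an orbit changes neither its boundedness nor its escape to infinity. By Definition~\ref{D1a} the set $E_U:=\R^n\setminus\bigcup_{k=1}^\infty f^k(U)$ has capacity zero, so for every $y\in\R^n\setminus E_U$ there exist $k\ge 1$ and $z\in U$ with $f^k(z)=y$. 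Complete invariance then forces $z\in I(f)$ whenever $y\in I(f)$, and likewise for $BO(f)$. Hence the theorem reduces to showing that neither $I(f)$ nor $BO(f)$ is contained in a set of capacity zero.

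For the escaping set, the plan is to build on~\cite{Bergweiler09}, which gives $I(f)\ne\emptyset$ for a transcendental entire \qr map. Using the transcendental growth of the maximum modulus of $f$ together with an analogue of the Wiman--Valiron method for \qr maps, one can select a region $A\subset\R^n$ near infinity on which $f$ admits controlled branches of $f^{-1}$ and iterate these branches to construct a Cantor-type forward invariant subset of $I(f)$. Standard \qr modulus estimates should then ensure that this Cantor set has positive capacity, so $I(f)\not\subset E_U$, producing some $y\in I(f)\setminus E_U$ and hence a point of $U\cap I(f)$.

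The approach to $BO(f)$ is analogous in spirit: the aim is to produce a bounded invariant set $\Lambda\subset BO(f)$ of positive capacity by constructing a hyperbolic Cantor repeller in a bounded region of $\R^n$. Such a configuration should be extractable from the proof of Theorem~\ref{T1a}, which already constructs nested preimage trees yielding infinitely many Julia points; the additional task is to arrange the construction so that all chosen preimages remain within a fixed ball, placing $\Lambda$ inside $J(f)\cap BO(f)$. Positive capacity of $\Lambda$ then follows from the same \qr modulus estimates. The main obstacle I expect lies exactly here, namely producing a bounded positive-capacity invariant set in the quasiregular transcendental setting without the classical tools of normal families or the density of repelling periodic points in $J(f)$. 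Once both positive-capacity assertions are established, the reduction described in the first paragraph completes the proof.
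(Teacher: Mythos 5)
Your first paragraph is exactly the paper's argument: every neighbourhood $U$ of a point of $J(f)$ has $\capacity\!\left(\R^n\backslash O^+_f(U)\right)=0$, so by complete invariance of $I(f)$ and $BO(f)$ the whole theorem reduces to the two facts $\capacity I(f)>0$ and $\capacity BO(f)>0$. That reduction is correct. The gap is that you do not actually establish these two facts, and the second one is the real content of the theorem. For $I(f)$ there is an easy repair you missed: the result of~\cite{Bergweiler09} (Lemma~\ref{lemma5a}) is not merely $I(f)\neq\emptyset$ but that $I(f)$ has an unbounded component, hence contains a non-degenerate continuum, and positive capacity follows at once from Lemma~\ref{lemma2h}; your proposed Wiman--Valiron/Cantor-set construction is both unproven in the quasiregular setting and unnecessary.

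For $BO(f)$ the situation is worse: the statement $\capacity BO(f)>0$ is precisely Theorem~\ref{T1d}, and the paper spends sections~\ref{F w/o p e} and~\ref{F w p e} proving it; your sketch of ``extracting a bounded hyperbolic Cantor repeller from the proof of Theorem~\ref{T1a}'' does not go through as stated. Two concrete obstacles: (a) keeping preimages in a fixed ball controls backward orbits, not forward ones, and the paper has to anchor the construction at a periodic point (via Siebert's theorem~\cite{Siebert06}, or a fixed point in the pits-effect case) to obtain a bounded, forward-invariant set $\overline W$ of positive capacity; (b) in the pits-effect case the ``many well-separated preimages'' mechanism (Lemma~\ref{lemma2d} with Lemma~\ref{lemma2f}) can fail outright when the number $L_k$ of components satisfies $L_k\le K_I(G)^k$, and there the paper needs a genuinely different argument, estimating diameters of the nested sets $V^k_\ell$ through moduli of Gr\"otzsch condensers (Lemmas~\ref{lemma2i}, \ref{lemma2k}, \ref{lemma2l}) and deriving a contradiction from $D>K_I(G)^2$. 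So the step you yourself flag as ``the main obstacle'' is a genuine missing proof, not a routine application of standard modulus estimates.
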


One ingredient in the proof of Theorem~\ref{T1c} is the following result of independent interest.
\begin{theorem}\label{T1d} Let $f$ be an entire \qr map of transcendental type. Then $\capacity BO(f)>0$.
\end{theorem}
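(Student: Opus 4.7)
The plan is to construct a compact forward-invariant subset of $BO(f)$ of positive capacity, exploiting the rich covering behaviour of a transcendental-type quasiregular map.

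First, I would use Rickman's quasiregular analogue of Picard's theorem (which bounds the number of omitted values of $f$) together with the transcendental-type hypothesis (which produces sequences $x_j \to \infty$ with $(f(x_j))$ bounded) to extract a bounded topological ball $U$ and a ball $B = B(a,r)$ with $\overline{U} \subset B$ such that $f^N \colon U \to B$ is a proper surjection of degree $d \geq 2$ for some iterate $N \geq 1$. The mechanism here is a quasiregular Ahlfors-type islands theorem (Rickman, Miniowitz, Bergweiler--Eremenko), which yields many bounded ``islands'' over any domain; combined with the transcendental-type return behaviour, a sufficiently deep iterate provides an island $U$ lying strictly inside its target ball. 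Since $BO(f) = BO(f^N)$, I may replace $f$ by $f^N$ and assume $N = 1$.

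Given such $U$ and $B$, I would define
\[
K := \bigcap_{k \geq 0} f^{-k}(\overline{U}).
\]
At level $k$, $f^{-k}(\overline{U}) \cap \overline{U}$ contains at least $d^k$ connected components (one per address sequence in $\{1,\ldots,d\}^k$), so the nested intersection $K$ is a non-empty compact Cantor-like set contained in $\overline{U} \subset B$. For any $x \in K$ and every $k \geq 0$ we have $f^k(x) \in \overline{U}$, so every orbit starting in $K$ stays bounded; thus $K \subset BO(f)$.

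To conclude, I would show $\operatorname{cap} K > 0$. The quasiregular inverse branches satisfy Mori-type distortion inequalities, so the $d^k$ components of $f^{-k}(\overline{U}) \cap U$ have diameters bounded below at a geometric rate. This gives $K$ positive Hausdorff dimension, and a Frostman-style energy estimate adapted to the $n$-conformal capacity used in quasiregular theory then upgrades this to positive conformal capacity. The hardest step is the first: securing an island $U$ with $\overline{U} \subset B$. Simple examples such as the exponential map have no bounded preimage component of any ball sitting inside that ball, so a single iterate need not suffice, and the argument must pass to suitably deep iterates of $f$ and use the full strength of the Rickman islands machinery to extract the required self-covering configuration from the transcendental-type hypothesis alone.
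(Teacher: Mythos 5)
Your reduction to a single proper ``island'' $f^N\colon U\to B$ of degree $d\ge 2$ and the subsequent distortion argument contain the decisive gap. For quasiregular maps there is no uniform control of inverse branches under iteration: the dilatation of $f^k$ may grow like $K(f)^k$, and each iterate is only locally H\"older continuous with exponent $K_I^{1/(1-n)}<1$ (Lemma~\ref{lemma2m}). Consequently the diameters of the nested preimage components can decay super\-geometrically, and the set $K=\bigcap_k f^{-k}(\overline U)$ may well have Hausdorff dimension zero; a ``Mori-type'' bounded-distortion estimate for the branches of $f^{-k}$ simply does not exist in this setting. Moreover the count of $d^k$ components ``one per address sequence'' fails because of the branch set: $f^{-1}(\overline U)\cap U$ can consist of a single component on which $f$ has degree $d$, so components need not proliferate at all, and even when they do, the separation needed to apply a preimage-counting lemma is not automatic. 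This is precisely why the paper's proof is more delicate: one needs each point to have more than $K_I(f)$ well-separated preimages, so that Lemma~\ref{lemma2d}(ii) gives positive Hausdorff measure for a logarithmic gauge strong enough for Wallin's criterion (Lemma~\ref{lemma2f}) --- note this yields positive capacity, \emph{not} positive dimension --- and, in the unavoidable case where the preimage components do not multiply ($L_k\le K_I(G)^k$), the paper instead needs total degree $D>K_I(f)^2$ and a Gr\"otzsch-condenser estimate (Lemmas~\ref{lemma2i}--\ref{lemma2l}) to show that $\capacity X=0$ would force the nested sets to shrink faster than the degree growth permits. A degree-$2$ self-covering carries no such information when $K_I(f)$ is large, so your final step cannot be repaired without reintroducing hypotheses of this quantitative kind.

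The first step is also unsupported: in dimension $n\ge 3$ there is no Ahlfors-type islands theorem for quasiregular maps of the strength you invoke, and the paper never produces your configuration in general. It obtains proper maps onto a ball only in the pits-effect case, via the rescalings $g_m(x)=f(R_mx)/R_m$, Harnack's inequality and quasinormality; in the non-pits case it works instead with coverings $g_m(U_j)\supset U_i$ of $N>4K_I(f)$ pairwise separated domains (via Miniowitz's theorem) and then uses Siebert's theorem on periodic points to build a forward-invariant set of positive capacity inside $BO(f)$. So both the construction of the self-covering and the passage from it to positive capacity require arguments that your outline does not supply.
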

Theorem~\ref{T1c} follows from Theorem~\ref{T1d} together with the result of~\cite{Bergweiler09} that $I(f)$ contains continua, implying that $\capacity I(f)>0$; cf. Lemma~\ref{lemma2h} and the remark following it.

 Even for entire functions in the complex plane, $BO(f)$ need not contain continua. In this setting the Hausdorff dimension of $BO(f)$ is positive, but can be arbitrarily small~\cite{Bergweiler12}.

We say that $\xi\in \R^n$ is a \emph{periodic point} of $f\colon \R^n\to \R^n$, if there exists $p\in \N$ such that $f^p(\xi)=\xi$.
The smallest $p$ with this property is called the \emph{period} of~$\xi$.
A periodic point of period $1$ is called a \emph{fixed point}.
If a periodic point $\xi$ of period $p$ has a neighbourhood $U$ such that
$f^{pk}|_U\to \xi$  uniformly as $k\to\infty$,
then $\xi$ is called \emph{attracting} and the set of all $x\in \R^n$ satisfying $f^{pk}(x)\to \xi$ as $k\to \infty$ is called the \emph{attracting basin} of $\xi$ and denoted by $A(\xi)$. As in \cite[Theorem~1.3]{FJ} we have the following result.
\begin{theorem}\label{T1e} Let $f$ be an entire \qr map of transcendental type
with an attracting fixed point $\xi$. Then $J(f)\cap A(\xi)=\emptyset$ and $J(f)\subset \partial A(\xi)$.
\end{theorem}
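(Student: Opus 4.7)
The plan is to verify the two conclusions separately. For $J(f)\cap A(\xi)=\emptyset$, I exhibit, for each point of $A(\xi)$, a neighbourhood whose entire forward orbit is bounded, so that Definition~\ref{D1a} fails. For $J(f)\subset\partial A(\xi)$, I use that a point of $J(f)$ admits neighbourhoods whose forward orbits fill everything up to a capacity-zero set, combined with the complete invariance of $A(\xi)$.

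For the first inclusion, fix $x\in A(\xi)$ and let $V$ be a bounded neighbourhood of $\xi$ on which $f^k\to\xi$ uniformly, as supplied by the definition of attracting fixed point. Since $f^N(x)\in V$ for some $N$, continuity lets me choose a bounded neighbourhood $U$ of $x$ with $f^N(U)\subset V$. The tail $\bigcup_{k\ge N}f^k(U)\subset\bigcup_{k\ge 0}f^k(V)$ is bounded, since uniform convergence places all but finitely many of the sets $f^k(V)$ into a fixed neighbourhood of $\xi$, while the remaining finitely many $f^k(V)$ are bounded by continuity; the initial segment $f(U)\cup\dots\cup f^{N-1}(U)$ is also bounded because $U$ is. Hence $\bigcup_{k\ge 1}f^k(U)$ is bounded, so its complement contains a neighbourhood of $\infty$ and thus has positive capacity. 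By Definition~\ref{D1a}, $x\notin J(f)$.

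For the second inclusion, fix $x\in J(f)$ and an arbitrary neighbourhood $U$ of $x$, and set $S=\bigcup_{k\ge 1}f^k(U)$, so that $\capacity(\R^n\setminus S)=0$. The non-empty open set $V$ lies in $A(\xi)$, hence has positive capacity and cannot be contained in the capacity-zero set $\R^n\setminus S$. Therefore $f^k(y)\in V\subset A(\xi)$ for some $y\in U$ and some $k\ge 1$, and the complete invariance of $A(\xi)$ then forces $y\in A(\xi)$. Thus $U\cap A(\xi)\neq\emptyset$; as $U$ was arbitrary and $x\notin A(\xi)$ by the first part, $x\in\partial A(\xi)$.

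The argument is largely formal once one recalls three standard facts from the \qr literature: non-empty open subsets of $\R^n$ have positive capacity, a set of positive capacity cannot be contained in a set of capacity zero, and the complement of a bounded set in $\R^n$ has positive capacity. No substantial obstacle arises here, since the capacity-based Definition~\ref{D1a} of the Julia set is tailored to exactly this kind of argument.
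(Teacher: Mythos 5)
Your proposal is correct and follows essentially the same route as the paper: for $x\in A(\xi)$ one finds a neighbourhood whose forward orbit is bounded, so its complement has positive capacity and $x\notin J(f)$; for $x\in J(f)$ one uses that an open subset of $A(\xi)$ has positive capacity, hence meets $O^+_f(U)$, and complete invariance of $A(\xi)$ then gives $U\cap A(\xi)\neq\emptyset$. The only cosmetic differences are that you argue the second part directly with the bounded neighbourhood $V$ of $\xi$ rather than by contradiction with $A(\xi)$ itself, and you spell out the boundedness of the forward orbit in more detail.
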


Again we have equality for entire functions in the complex plane, but not in the current setting; see Example~\ref{Ex7c} below.

The \emph{forward orbit} $O^+_f(x)$ and the \emph{backward orbit} $O^-_f(x)$
of a point $x\in \R^n$ are defined by
\[O^+_f(x)=\left\{f^k(x): k \in \N\right\}\]
and
\[O^-_f(x)=\bigcup^\infty_{k=0} f^{-k}(x)=\bigcup^\infty_{k=0}\left\{y\in \R^n:f^k(y)=x\right\}.\]
For $A\subset \R^n$ we put
\[
O^\pm_f(A)=\bigcup_{x\in A} O^\pm_f(x).
\]
With this terminology~\eqref{1a} takes the form
\begin{equation} \label{1a1}
\capacity\!\left(\R^n\backslash O^+_f(U)\right)=0.
\end{equation}

The \emph{exceptional set} $E(f)$ of a \qr map $f\colon \R^n\to \R^n$ is the set of
all points with finite backward orbit.
It is a simple consequence of Picard's theorem that the exceptional set of a
non-linear entire function in the complex plane contains at most one point. Rickman~\cite{Rickman80} has extended Picard's theorem to \qr maps and shown that there exists a constant $q=q(n,K)$ such that if $f\colon \R^n\to\R^n$ is a $K$-quasiregular map of transcendental type and if $a_1,a_2,\dots, a_q\in \R^n$ are distinct, then there exists $j\in\{1,\dots,q\}$ such that $f^{-1}(a_j)$ is infinite. This implies that $E(f)$ contains at most $q-1$ points.

As in~\cite{FJ} we obtain the analogues of some further standard results of complex dynamics under the additional hypothesis of (local) Lipschitz continuity.

\begin{theorem}\label{T1f} Let $f$ be an entire \qr map of transcendental type.
Suppose that $f$ is locally Lipschitz continuous. Then
\begin{itemize}
\item[(i)\ ]
$J(f)\subset\overline{O^-_f(x)}$ for all $x\in \R^n\backslash E(f)$,
\item[(ii)\ ]
$J(f)=\overline{O^-_f(x)}$ for all $x\in J(f)\backslash E(f)$,
\item[(iii)\ ]
$\R^n\backslash O^+_f(U) \subset E(f)$ for every open set $U$ intersecting $J(f)$,
\item[(iv)\ ]
$J(f)$ is perfect,
\item[(v)\ ]
$J(f^p)=J(f)$ for all $p\in\N$.
\end{itemize}
\end{theorem}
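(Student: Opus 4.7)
The plan is to prove the five parts in the order (i), (iii), (ii), (iv), (v), closely following the corresponding arguments in \cite{FJ} for uniformly quasiregular self-maps of $S^n$. The locally Lipschitz hypothesis plays the role that the compactness of $S^n$ plays there: together with the quasiregularity of $f$, it guarantees that sets of capacity zero are preserved in both directions---forward under $f$ (from the Lipschitz side) and backward under $f^{-1}$ (from the quasiregular side).

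The crux is (i). Fix $x \in \R^n \setminus E(f)$ and take $y \in J(f)$ with a neighborhood $U$. I argue by contradiction, supposing $O^-_f(x) \cap U = \emptyset$. A short definition-chase shows $O^-_f(x) \cap O^+_f(U) = \emptyset$, since any $u \in U$ with $f^{j+k}(u) = x$ would witness both sets; hence $O^-_f(x) \subset \R^n \setminus O^+_f(U)$, which has capacity zero by the defining property of $J(f)$ at $y$. The main obstacle is then turning $\capacity O^-_f(x) = 0$ into a contradiction with $x \notin E(f)$; this is where the locally Lipschitz hypothesis is essential. Following \cite{FJ}, I would use Rickman's quasiregular version of Picard's theorem together with the two-sided capacity preservation above to show that any set of capacity zero that is closed under backward iterates is contained in $E(f)$, contradicting $x \in O^-_f(x)$.

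Parts (iii) and (ii) are short consequences of (i). For (iii), take $U$ open with $U \cap J(f) \neq \emptyset$ and suppose some $z \in \R^n \setminus O^+_f(U)$ lies outside $E(f)$; shrinking $U$ to exclude $z$ if necessary, (i) applied with $x = z$ produces $u \in O^-_f(z) \cap U$, giving $z = f^k(u) \in f^k(U) \subset O^+_f(U)$ for some $k \geq 1$, a contradiction. For (ii), (i) supplies $J(f) \subset \overline{O^-_f(x)}$; the reverse follows from the complete invariance of $J(f)$ (which yields $O^-_f(x) \subset J(f)$ for $x \in J(f)$) together with the closedness of $J(f)$, which is direct from the definition (if $x_n \to x$ with $x_n \in J(f)$, every neighborhood of $x$ is eventually a neighborhood of some $x_n$, so the capacity condition persists).

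For (iv), $J(f)$ is closed by the above, so only the absence of isolated points needs proof. Suppose toward contradiction that $y \in J(f)$ is isolated with $V \cap J(f) = \{y\}$ for some open $V$; using openness of $f$ and complete invariance ($f(V) \cap J(f) = \{f(y)\}$, and iteratively), every forward iterate $f^k(y)$ is isolated in $J(f)$, so if $y \in E(f)$ and some $f^k(y) \notin E(f)$ we replace $y$ by this iterate. I then claim $J(f) \setminus (E(f) \cup O^+_f(y)) \neq \emptyset$: in the remaining subcase $y \in E(f)$ with $O^+_f(y) \subset E(f)$, the union $E(f) \cup O^+_f(y)$ is finite and Theorem~\ref{T1a} gives this directly; in the case $y \notin E(f)$, otherwise $f^{-1}(y) \subset E(f) \cup O^+_f(y)$, and any preimage in $O^+_f(y)$ would force $y$ periodic (whence $J(f)$ finite, contradicting Theorem~\ref{T1a}), so $f^{-1}(y) \subset E(f)$, and inductively $O^-_f(y) \subset \{y\} \cup E(f)$ is finite, forcing $y \in E(f)$, a contradiction. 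Picking $w \in J(f) \setminus (E(f) \cup O^+_f(y))$, (ii) gives $y \in \overline{O^-_f(w)}$, while $w \notin O^+_f(y)$ implies $y \notin O^-_f(w)$, so $y$ is a limit point of $O^-_f(w) \subset J(f)$, producing elements of $J(f) \cap V$ other than $y$, a contradiction. Finally for (v), $J(f^p) \subset J(f)$ is immediate from $O^+_{f^p}(U) \subset O^+_f(U)$; the reverse hinges on showing $J(f^p)$ is $f$-invariant, which I expect to be the main obstacle here and would tackle by combining the $f^p$-complete-invariance of $J(f^p)$ with the two-sided capacity preservation, after which picking $\tilde x \in J(f^p) \setminus E(f^p)$ (nonempty as $f^p$ is also of transcendental type and locally Lipschitz, with $E(f^p)$ finite by Rickman) gives $\tilde x \in J(f) \setminus E(f)$, and (ii) applied to $f$ yields $J(f) = \overline{O^-_f(\tilde x)} \subset J(f^p)$.
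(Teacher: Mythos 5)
Your reduction of (i)--(iv) to a single statement is essentially the paper's own strategy (its Theorem~\ref{T5a}), but the statement you reduce to -- that $\capacity\overline{O^-_f(x)}>0$ for every $x\in\R^n\backslash E(f)$, equivalently that a backward-invariant set of capacity zero must lie in $E(f)$ -- is precisely the hard core of the theorem, and your proposal does not prove it. The gesture ``Rickman's Picard theorem plus two-sided capacity preservation'' is not a proof and does not reflect what is done in \cite{FJ} or here: Rickman's theorem controls finitely many omitted values and says nothing about an infinite capacity-zero backward orbit, and capacity preservation under $f$ gives no lower bound on the size of $\overline{O^-_f(x)}$. Moreover, the role you assign to the Lipschitz hypothesis (forward preservation of capacity zero) is misplaced -- quasiregularity alone already gives that, via Lemma~\ref{lemma2i} -- so if your soft argument worked it would prove the statement with no Lipschitz hypothesis at all, i.e.\ it would settle the conjecture the authors explicitly leave open. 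In the paper the Lipschitz condition is used quantitatively: one needs the rescaling/covering construction of sections~\ref{F w/o p e} and~\ref{F w p e} (rescaled maps $g_m$, Miniowitz's normality theorem or, in the pits-effect case, the proper maps $G\colon V_j\to B(0,M)$) to produce, for every point of a suitable compact set, many $\delta$-separated preimages, and then Lemma~\ref{lemma2d}\,(i) with the Lipschitz constant gives $\dim\overline{O^-_f(x)}\geq \log m/\log L>0$ (Theorem~\ref{T5b}), whence positive capacity by Lemma~\ref{lemma2h}. None of this machinery appears in your proposal, so the crux of (i), and with it (ii)--(iv), is missing.

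The surrounding soft arguments are mostly fine and close to the paper's: your deduction of (ii) and the isolated-point analysis for (iv) parallel Theorem~\ref{T5a} (the paper's (iv) is shorter: non-periodic points of $J(f)\backslash E(f)$ exist and are non-isolated by (ii)). Two smaller points. In (iii), ``shrinking $U$ to exclude $z$'' can destroy $U\cap J(f)\neq\emptyset$ if $z$ is the only point of $J(f)$ in $U$; the clean route is the paper's: from $O^+_f(U)\cap O^-_f(z)\neq\emptyset$ one gets $z\in O^+_f(U)$ directly, without modifying $U$. In (v), your route (prove $f^{-1}(J(f^p))\subset J(f^p)$, then apply (ii) at some $\tilde x\in J(f^p)\backslash E(f^p)$, nonempty by Theorem~\ref{T1a} for $f^p$) is a genuine and workable alternative to the paper's Theorem~\ref{T5a1} -- the needed backward invariance does follow from the fact that quasiregular maps send capacity-zero sets to capacity-zero sets together with $\R^n\backslash f^l(\R^n)\subset E(f^l)$ -- but you only flag it as ``the main obstacle'' and again invoke the vague ``two-sided capacity preservation'' rather than supplying the argument; note also that it still rests on (ii), hence on the unproven crux.
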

Note that~(iii) is considerably stronger than the property~\eqref{1a} used in the definition of $J(f)$. It follows from~(iii), together with the complete invariance of $J(f)$, that
\[J(f)\backslash E(f)\subset O^+_f(U\cap J(f)) \subset J(f)\]
for every open set $U$ intersecting $J(f)$.

We also note that (v) implies that -- under the hypotheses of  Theorem~\ref{T1f} --
the conclusion of Theorem~\ref{T1e} holds not
only for attracting fixed points, but also for attracting periodic
points.

We could achieve that (v) always holds by modifying Definition~\ref{D1a} as
follows: instead of requiring that~\eqref{1a1} holds for all neighbourhoods
$U$ of $x$ we would require that
$\capacity\!\left(\R^n\backslash O^+_{f^p}(U)\right)=0$
for all neighbourhoods $U$ of $x$ and all $p\in\N$.
Suitable modifications of our arguments show that
with this definition of $J(f)$ our results about the Julia set
would also hold, and (v) would be true automatically.
However, we conjecture that Theorem~\ref{T1f} holds without the
hypothesis of local Lipschitz continuity. If this is true,
 then, in particular, (v) always holds
and so this modified definition agrees with the one given in Definition~\ref{D1a}.
Therefore we have used the
somewhat simpler definition of $J(f)$ in Definition~\ref{D1a}.

We denote the Hausdorff dimension of a subset $X$ of $\R^n$ by $\dim X$.
\begin{theorem}\label{T1g} Let $f$ be as in Theorem~\ref{T1f}. Then $\dim J(f)>0$.
\end{theorem}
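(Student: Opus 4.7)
My plan is to construct a Cantor-type subset of $J(f)$ of positive Hausdorff dimension as the attractor of a two-branch iterated function system built from inverse branches of some iterate $f^p$, using the Lipschitz property of $f^p$ to bound the dimension of this attractor from below.

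To begin, I would pick $x_0\in J(f)\setminus E(f)$; this is possible because Theorem~\ref{T1a} gives $\card J(f)=\infty$ while Rickman's extension of Picard's theorem implies that $E(f)$ is finite. Working in a fixed ball $B$ about $x_0$, let $L\ge 1$ be a Lipschitz constant for $f$ on a neighbourhood of $B$, so that $f^p$ is $L^p$-Lipschitz on $B$ for every $p\in\N$. Theorem~\ref{T1f}(ii) says that $O^-_f(x_0)$ is dense in $J(f)$, and Theorem~\ref{T1f}(iv) that $J(f)$ is perfect, so preimages of $x_0$ accumulate at $x_0$ itself.

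Using this, I would locate two distinct non-critical points $\xi_1,\xi_2\in f^{-p}(x_0)\cap B(x_0,r_0)$ at a common level $p$, where $r_0$ is as small as we like. (Criticality can be avoided because the branch set of $f^p$ has topological, hence Hausdorff, dimension at most $n-2$.) For each $\xi_j$ there is a local inverse branch $\phi_j$ of $f^{-p}$ at $x_0$, defined on a neighbourhood of $x_0$, with $\phi_j(x_0)=\xi_j$ and, by the Lipschitz condition on $f^p$,
\[
|\phi_j(u)-\phi_j(v)|\ge\frac{|u-v|}{L^p}
\]
on its domain. After shrinking to a ball $B(x_0,r)\subset B(x_0,r_0)$ I would arrange that both $\phi_j$ are defined on $B(x_0,r)$, that the images $\phi_j(B(x_0,r))$ are disjoint, and that each $\phi_j(B(x_0,r))\subset B(x_0,r)$. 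The attractor of the resulting IFS,
\[
C=\bigcap_{k\ge 1}\bigcup_{(i_1,\ldots,i_k)\in\{1,2\}^k}\phi_{i_1}\circ\cdots\circ\phi_{i_k}\bigl(\overline{B(x_0,r)}\bigr),
\]
then lies inside $\overline{O^-_f(x_0)}\subset J(f)$. At the $k$-th level $C$ is partitioned into $2^k$ cells whose pairwise separation is at least $c/L^{kp}$, with $c$ coming from $|\xi_1-\xi_2|$, by iterated application of the inverse-Lipschitz estimate above. A standard Moran/mass-distribution argument then yields $\dim C\ge \log 2/(p\log L)>0$, from which $\dim J(f)>0$.

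The main obstacle will be ensuring the self-mapping property $\phi_j(B(x_0,r))\subset B(x_0,r)$. Since inverse branches of a Lipschitz quasiregular map are, in general, only H\"older continuous, they may locally expand and need not send any given ball into itself. My plan to overcome this is to exploit Theorem~\ref{T1f}(ii) and (iv) to take $\xi_1,\xi_2$ arbitrarily close to $x_0$, so that after shrinking $r$ each image $\phi_j(B(x_0,r))$ becomes concentrated in a tiny neighbourhood of $\xi_j$ and hence is confined to $B(x_0,r)$. Making this quantitative---possibly by replacing the single ball $B(x_0,r)$ by a nested sequence of balls of varying radii, along which the same Hausdorff-dimension lower bound can still be read off---is the technical heart of the proof.
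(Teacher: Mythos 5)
There is a genuine gap, and it sits exactly where you locate it: the nesting property $\phi_j(B(x_0,r))\subset B(x_0,r)$ is not a technicality that can be recovered by taking $\xi_1,\xi_2$ close to $x_0$ and shrinking $r$. From the Lipschitz bound $|f^p(a)-f^p(b)|\le L^p|a-b|$ you get only the lower bound $|\phi_j(u)-\phi_j(v)|\ge |u-v|/L^p$; the only upper control on an inverse branch is a local H\"older estimate (coming from quasiconformality of the branch, cf.\ Lemma~\ref{lemma2m}), say $\diam \phi_j(B(x_0,r))\le C r^\alpha$ with $\alpha<1$, where $C$ is governed by the behaviour of $f^p$ near $\xi_j$ and has nothing to do with $|\xi_j-x_0|$. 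If $f^p$ is strongly contracting near $\xi_j$, then $\phi_j$ expands and $Cr^\alpha\gg r$ for all small $r$, so no choice of small $r$, and no choice of $\xi_j$ closer to $x_0$, gives $\phi_j(B(x_0,r))\subset B(x_0,r)$. Without some containment of this kind you cannot even define the compositions $\phi_{i_1}\circ\cdots\circ\phi_{i_k}$, and the iterated preimages may leave every compact set on which the local Lipschitz constant $L$ is valid ($f$ is only locally Lipschitz), which destroys both the separation estimate and the constant in the Moran argument; the fallback ``nested balls of varying radii'' is precisely the missing proof, since nothing in the proposal controls the expansion of the branches uniformly over all levels. Two further loose ends: conclusions (ii) and (iv) of Theorem~\ref{T1f} give infinitely many points of $O^-_f(x_0)$ near $x_0$, but these are preimages at varying levels, not two preimages at a common level $p$ (repairable by using two branches of different levels); and the claim that the $\xi_j$ can be taken outside $B_{f^p}$ because the branch set has dimension at most $n-2$ is not an argument, since $f^{-p}(x_0)$ is a discrete set which could a priori consist of branch points.

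By contrast, the paper's proof avoids inverse branches entirely. It first proves (Theorem~\ref{T5b}) that $\dim\overline{O^-_f(x)}>0$ for every $x\in\R^n\backslash E(f)$: the constructions of sections~\ref{F w/o p e} and~\ref{F w p e} (the domains $U_j$ with the Rickman--Miniowitz covering property in the no-pits case, respectively the components $V_j$ with $G\colon V_j\to B(0,M)$ proper in the pits case) produce a fixed compact set in which every point has $m\ge 2$ preimages that are $\delta$-separated and again lie in that set; Lemma~\ref{lemma2d}(i) then gives the dimension bound using only the forward Lipschitz constant and this separation -- no local injectivity, no self-mapping balls. Theorem~\ref{T1g} follows by combining this with conclusion (ii), since $J(f)\backslash E(f)\neq\emptyset$ by Theorem~\ref{T1a} and the finiteness of $E(f)$. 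To salvage your local IFS approach you would essentially have to re-prove such a ``many separated preimages inside a fixed compact set'' statement, and that is exactly where the pits-effect dichotomy and the global machinery enter.
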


In our proof of Theorem~\ref{T1a} and subsequent results we have to distinguish two cases.
It turns out that the hypothesis of Lipschitz continuity in Theorem~\ref{T1f} is needed only in one of these cases.
In order to motivate the terminology, we note that an entire function is said to have the ``pits effect'' if $|f(z)|$ is ``large'' except in ``small'' domains (which are called pits).
This concept was introduced by Littlewood and
Offord~\cite{Littlewood48}; see also~\cite{Eremenko07}. We adapt this terminology, although our definition of ``large'' and ``small'' is different from the one in the papers cited.
\begin{definition}\label{D1b}
A \qr map $f\colon \R^n\to \R^n$ of transcendental type is said to have the \emph{pits effect}
if there exists $N\in \N$ such that, for all $c>1$ and all $\varepsilon>0$, there exists $R_0$ such that if $R>R_0$, then
\[\left\{x\in \R^n:R\le |x|\le cR,\ |f(x)|\le 1\right\}\]
can be covered by $N$ balls of radius $\varepsilon R$.
\end{definition}

For example, it follows directly that if there exists a sequence $(x_k)$ tending to~$\infty$ such that $|f(x_k)|\le 1$ for all $k\in \N$ and
$\limsup_{k\to \infty}|x_{k+1}|/|x_k| <\infty$,
then $f$ does not have the pits effect. In the definition of the pits effect,
we could replace the condition that
$|f(x)|\le 1$ by $|f(x)|\le C$ for any positive constant $C$ and in fact by
$|f(x)|\le R^\alpha$ for any $\alpha>0$; see Theorem~\ref{T8a}.

\begin{theorem}\label{T1h} Let $f$ be an entire \qr map of transcendental type which does not have the pits effect. Then the conclusion of Theorem~\ref{T1f} holds.
\end{theorem}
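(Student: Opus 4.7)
The plan is to revisit the proof of Theorem~\ref{T1f} and, wherever the local Lipschitz hypothesis was invoked, to supply instead an argument that draws on the failure of the pits effect. As the passage preceding Definition~\ref{D1b} signals, the arguments in this paper split into two cases according to whether $f$ has the pits effect, and the Lipschitz hypothesis in Theorem~\ref{T1f} enters only in the pits-effect case. Under the present hypothesis we sit always in the complementary case, so the task is to spell out the ``no pits effect'' side of the proof in a self-contained way.

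The first step is to convert the negation of Definition~\ref{D1b} into usable geometric data. Unpacking, and using Theorem~\ref{T8a} to upgrade the bound $|f(x)|\le 1$ to $|f(x)|\le R^\alpha$ for some $\alpha>0$, we obtain a sequence $N_m\to\infty$ together with constants $c_m>1$, $\varepsilon_m>0$ and arbitrarily large radii $R$ such that the set $\{x\in\R^n:R\le|x|\le c_m R,\ |f(x)|\le R^\alpha\}$ cannot be covered by $N_m$ balls of radius $\varepsilon_m R$. Choosing a maximal $\varepsilon_m R$-separated subset yields at least $N_m$ points in the annulus $\{R\le|x|\le c_m R\}$, mutually separated by $\varepsilon_m R$, on each of which $|f|\le R^\alpha$. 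This abundant, well-spread supply of near-zeros of $f$ in annuli going to infinity is the substitute for the derivative control provided by Lipschitz continuity.

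With this tool in hand, items (i)--(v) are established in turn. For (i), fix $x\in\R^n\setminus E(f)$ and let $U$ be any open set meeting $J(f)$; by Definition~\ref{D1a}, $O^+_f(U)$ omits only a set of capacity zero. Applying Rickman's Picard-type theorem to a suitable iterate $f^k$, combined with the large collection of well-separated near-zeros constructed above, one shows that $O^+_f(U)$ actually omits only a finite set, which by backward invariance must lie in $E(f)$. Hence $x\in O^+_f(U)$, so some $y\in U$ has $f^k(y)=x$, which gives $U\cap O^-_f(x)\ne\emptyset$ and proves (i). Statement (ii) follows from (i) using complete invariance of $J(f)$ and the fact that $J(f)$ is closed; (iii) follows from (i) applied to the point in question, since $U$ meets $J(f)\subset\overline{O^-_f(a)}$; (iv) follows from (ii) together with $\card J(f)=\infty$ from Theorem~\ref{T1a}; and (v) is a standard consequence of (iii), using that $E(f^p)\supset E(f)$ is still finite and that forward orbits under $f^p$ are cofinal enough in those under $f$.

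The main obstacle will be the quantitative step that converts the spread-out near-zeros into control on what $O^+_f(U)$ can omit. If $O^+_f(U)$ were to miss a set of positive capacity, then quasiregular distortion estimates on annuli should allow one to pull this set back through many iterates while retaining enough of the well-separated near-zeros, contradicting Rickman's Picard theorem for a suitable $f^k$. Managing this iteration of distortion estimates --- in particular tracking how the parameters $c_m$, $\varepsilon_m$, $N_m$ propagate under composition and at which scales the negation of the pits effect is applied --- is the heart of the proof.
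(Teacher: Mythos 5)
Your overall strategy (show that when the pits effect fails, forward orbits of open sets meeting $J(f)$ can omit only exceptional points, then read off (i)--(iv)) is in the right spirit, but the step that carries all the weight is asserted rather than proved, and the mechanism you propose for it would not work as described. The paper derives (i)--(iv) from Theorem~\ref{T1i}: failure of the pits effect implies $\capacity \overline{O^-_f(x)}>0$ for every $x\in\R^n\backslash E(f)$, and then Theorem~\ref{T5a} turns this into (i)--(iv) by the capacity-zero/positive-capacity intersection argument with $O^+_f(U)$. Your substitute -- ``apply Rickman's Picard-type theorem to a suitable iterate $f^k$, combined with the well-separated near-zeros, to show $O^+_f(U)$ omits only a finite set'' -- is exactly the place where the quantitative content lives, and you acknowledge you have not supplied it. Moreover, Rickman's theorem applied to iterates is problematic in principle: $f$ is not uniformly quasiregular, so $K(f^k)$ and hence the Rickman constant $q(n,K(f^k))$ grow with $k$, and an iterate omitting many values yields no contradiction; also Rickman's theorem concerns maps defined on all of $\R^n$, not restrictions to $U$. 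The paper avoids this by rescaling ($h_m(x)=f(R_mx)$, $g_m=h_m/R_m$), using Miniowitz's normal-family theorem to get each $U_j$ mapped over at least $N-q$ of the $U_i$, and then converting this covering combinatorics into a capacity bound via the H\"older continuity of quasiregular maps (Lemma~\ref{lemma2m}), the backward-orbit Hausdorff-measure estimate (Lemma~\ref{lemma2d}(ii)) and Wallin's lemma (Lemma~\ref{lemma2f}); note also that a capacity-zero omitted set can be infinite, so finiteness of $\R^n\backslash O^+_f(U)$ cannot be extracted from Definition~\ref{D1a} without an input of exactly this kind.

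Conclusion (v) is a second genuine gap: it is not ``a standard consequence of (iii).'' Knowing $\R^n\backslash O^+_f(U)\subset E(f)$ says nothing directly about $\R^n\backslash O^+_{f^p}(U)$, since the images $f^k(U)$ with $k\not\equiv 0 \pmod p$ need not lie in $O^+_{f^p}(U)$. The paper proves (v) by showing that $g(x)=f^p(Cx)/C$ with $C=M(1,f^{p-1})$ again fails the pits effect (because $|f(x)|\le 1$ forces $|f^p(x)|\le C$), so Corollary~\ref{C1a} gives $\capacity J(f^p)>0$, and then Theorem~\ref{T5a1} -- itself a nontrivial argument pushing forward capacity-zero sets by $f^l$ -- yields $J(f^p)=J(f)$. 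Your proposal contains no substitute for either of these ingredients. (A minor point: invoking Theorem~\ref{T8a} to pass from $|f(x)|\le 1$ to $|f(x)|\le R^\alpha$ in the negation of the pits effect is unnecessary -- the latter set contains the former -- and Theorem~\ref{T8a} is used in the paper in the opposite direction.)
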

Theorem~\ref{T1h} is a corollary of the following result.
\begin{theorem}\label{T1i}
Let $f$ be an entire \qr map of transcendental type which does not have the pits effect.
Then
$\capacity \overline{O^-_f(x)}>0$ for all $x\in \R^n\backslash E(f)$.
\end{theorem}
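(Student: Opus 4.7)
The plan is to argue by contradiction: assume $x\in\R^n\setminus E(f)$ and $E:=\overline{O^-_f(x)}$ has $\capacity E=0$. A closed set of capacity zero is nowhere dense, and the openness and discreteness of $f$ imply that $E$ is backward invariant: if $y\in E$ is approached by $y_m\in O^-_f(x)$ and $z\in f^{-1}(y)$, local quasiregular branches of $f$ produce $z_m\in f^{-1}(y_m)\subset O^-_f(x)$ with $z_m\to z$. Hence $f^k(U)\subset\R^n\setminus E$ for every $k\in\N$ and every open $U\subset\R^n\setminus E$, and such $U$ are abundant. I then fix $N:=q(n,K)+2$, where $q(n,K)$ is Rickman's quasi-Picard constant from the discussion preceding Definition~\ref{D1b}. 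By the failure of the pits effect there exist $c>1$, $\varepsilon>0$ and $R_k\to\infty$ such that the sets $P_k=\{y:R_k\le|y|\le cR_k,\ |f(y)|\le 1\}$ admit no covering by $N$ balls of radius $\varepsilon R_k$; a maximal-separation argument produces $N+1$ points $y_{k,1},\dots,y_{k,N+1}\in P_k$ with pairwise distances exceeding $\varepsilon R_k$.

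I next introduce the rescalings $g_k(z):=f(R_k z)$ on the fixed annulus $\tilde A:=\{1/2<|z|<2c\}$; these are $K$-quasiregular, and the points $z_{k,j}:=y_{k,j}/R_k\in\{1\le|z|\le c\}$ are pairwise $\varepsilon$-separated with $|g_k(z_{k,j})|\le 1$. Viewing $(g_k)$ as $K$-quasimeromorphic maps into $S^n$, a Rickman--Miniowitz-type normal family argument gives a subsequential limit $g\colon\tilde A\to S^n$. The existence of $N+1>q(n,K)+1$ separated limit points $z_j$ with $g(z_j)\in\overline{B(0,1)}$ rules out $g\equiv\infty$, so $g$ is a non-constant quasiregular map, and Rickman's Picard theorem implies $g$ attains every value of $\R^n$ on $\tilde A$ except at most $q(n,K)$. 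Arranging that $x$ is not among the omitted values (if need be by perturbing the sequence $R_k$ or shifting the centre of rescaling) furnishes $z^*\in\tilde A$ with $g(z^*)=x$; by uniform convergence and the open-discrete structure of quasiregular maps, each $z^*$ lifts to a genuine preimage of $x$ under $f$ in the annulus $\{R_k/2\le|y|\le 2cR_k\}$ for all large $k$. To convert accumulation at $\infty$ into accumulation inside $\R^n$---which is what positive capacity requires---I iterate, using each such preimage $\eta$ as the centre of an analogous construction: the no-pits-effect hypothesis is scale-invariant (cf.\ Theorem~\ref{T8a} referred to after Definition~\ref{D1b}), and so a tree of iterated preimages of $x$ under $f^m$ with controlled branching and separation is obtained inside $\overline{O^-_f(x)}$. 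Standard capacity estimates for Cantor-like sets then show the closure of this tree has positive conformal capacity, contradicting $\capacity E=0$.

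The main obstacle is the normal-family and limit-identification step: establishing that the rescaled family $(g_k)$ admits a non-constant quasiregular subsequential limit $g$ on $\tilde A$ which actually attains the specified value $x$. Because $f$ is transcendental, $g_k$ may take very large values on substantial parts of $\tilde A$, so one must work on $S^n$ throughout. The $N+1$ bounded-value points prevent the degenerate limit $g\equiv\infty$, and Rickman's Picard theorem controls the omitted values of $g$, but guaranteeing $x\in g(\tilde A)$ requires care---as does the inductive transfer of the no-pits-effect hypothesis across scales needed to iterate the construction and to obtain the final capacity lower bound.
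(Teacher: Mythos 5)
Your plan hinges on extracting a non-constant quasiregular limit $g$ of the rescalings $g_k(z)=f(R_kz)$ on the annulus $\tilde A$, and then invoking Rickman's Picard theorem to say that $g$ omits at most $q(n,K)$ values; both halves of this step fail. First, nothing gives normality (or even quasinormality) of $(g_k)$ here: by Lemma~\ref{lemma2a} the maximum modulus of $f$ grows faster than any power of $R_k$, so on any subdomain of $\tilde A$ containing one of your bounded-value points $z_{k,j}$ together with a point where $|g_k|$ is comparable to the maximum modulus, no subsequence converges locally uniformly in $S^n$. The rescaled family is precisely \emph{non}-normal in such regions, and this non-normality is the engine of the paper's argument rather than an obstacle to be removed. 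Second, even if a non-constant limit $g$ existed on $\tilde A$, Lemma~\ref{rickman-thm} concerns maps defined on all of $\R^n$; a quasiregular map on a bounded annulus is merely open and discrete and may omit an open set of values, so there is no control on $g(\tilde A)$ and no way to ``arrange that $x$ is not among the omitted values''. The correct tool is Miniowitz's theorem (Lemma~\ref{lemma2b}) applied directly to the non-normal sequence: since $O^-_f(x)$ is infinite for $x\notin E(f)$, each non-normal $h_m$ must take a value of $O^-_f(x)$ in each chosen domain $U_j$ (giving a point $v=R_mu\in O^-_f(x)$), and likewise each $g_m(U_j)$ must cover all but at most $q$ of $N$ pairwise separated domains $U_i$.

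There is also a quantitative gap: your choice $N=q(n,K)+2$ and the concluding ``standard capacity estimates for Cantor-like sets'' ignore that quasiregular maps can decrease capacity by a factor tied to $K_I(f)$ (Lemma~\ref{lemma2i}) and are only H\"older continuous with exponent $K_I(f)^{1/(1-n)}$ (Lemma~\ref{lemma2m}). The paper's proof arranges, via a counting argument requiring $N\ge 8q$, that every point of $X=\bigcup_{j\le L}\overline U_j$ has at least $N/4$ preimages in $X$ that are $\delta$-separated, and then needs $N>4K_I(f)$ so that Lemma~\ref{lemma2d}(ii) combined with Lemma~\ref{lemma2f} yields $\capacity\overline{O^-_{g_m}(y)}>0$; with branching number only $q+2$ the gauge exponent $(n-1)\log m/\log K_I(f)$ need not exceed $n-1$, and the passage from Hausdorff measure to capacity breaks down. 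Finally, your multi-scale ``tree of iterated preimages'' with re-centred rescalings is unsupported: Theorem~\ref{T8a} permits replacing the bound $1$ by $R^\alpha$, not re-centring the annuli, and no such construction is needed, since a single scale $R_m$ suffices once one point $v\in O^-_f(x)$ is located in some $R_mU_j$, because then $\overline{O^-_f(v)}\subset\overline{O^-_f(x)}$ already has positive capacity.
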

Together with the complete invariance of $J(f)$ we obtain the following
result from Theorem~\ref{T1i}.

\begin{cor} \label{C1a} Let $f$ be
as in Theorem~\ref{T1i}.
Then $\capacity J(f)>0$.
\end{cor}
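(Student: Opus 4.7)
The plan is to deduce the corollary almost immediately from Theorem~\ref{T1i} by locating a point of $J(f)$ whose entire backward orbit lies in $J(f)$ and to which Theorem~\ref{T1i} can be applied. The three ingredients I intend to combine are the complete invariance of $J(f)$, Theorem~\ref{T1a}, and Rickman's version of Picard's theorem (as recalled in the excerpt after the definition of $E(f)$).

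First I would verify that $J(f)$ is a closed subset of $\R^n$. This is a direct consequence of Definition~\ref{D1a}: if $x\notin J(f)$, then some neighbourhood $U$ of $x$ satisfies $\capacity(\R^n\setminus O^+_f(U))>0$, and since $U$ is a neighbourhood of every one of its points, no point of $U$ lies in $J(f)$. Thus $\R^n\setminus J(f)$ is open.

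Next, by Theorem~\ref{T1a} we have $\card J(f)=\infty$, while Rickman's theorem (quoted in the excerpt) gives $\card E(f)\le q-1<\infty$. Consequently $J(f)\setminus E(f)\neq\emptyset$, so we may choose some $x\in J(f)\setminus E(f)$. By the complete invariance of $J(f)$, which is stated immediately after Theorem~\ref{T1b}, every preimage $f^{-k}(x)$ is contained in $J(f)$, so $O^-_f(x)\subset J(f)$. Because $J(f)$ is closed, this yields
\[
\overline{O^-_f(x)}\subset J(f).
\]

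Finally, Theorem~\ref{T1i} applied to this particular $x$ gives $\capacity\overline{O^-_f(x)}>0$, and the monotonicity of capacity then forces $\capacity J(f)>0$, as required.

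There is no real obstacle here; the only point that needs a moment of care is the elementary observation that $J(f)$ is closed, which is needed to pass from $O^-_f(x)\subset J(f)$ to $\overline{O^-_f(x)}\subset J(f)$ before invoking monotonicity of capacity.
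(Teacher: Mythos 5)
Your argument is correct and is essentially the paper's intended derivation: the paper deduces Corollary~\ref{C1a} from Theorem~\ref{T1i} "together with the complete invariance of $J(f)$", and you have simply made explicit the implicit details (closedness of $J(f)$, and the existence of a point of $J(f)\backslash E(f)$ via Theorem~\ref{T1a} and the finiteness of $E(f)$). No issues.
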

Theorems~\ref{T1h} and \ref{T1i} apply in particular to higher dimensional analogues
of  the exponential and the trigonometric functions considered
in~\cite{Bergweiler10a,Bergweiler11,FletcherSine}; see
Examples~\ref{Ex7e} and \ref{Ex7d} below.
These functions are also locally Lipschitz continuous so that we
could apply Theorem~\ref{T1f} as well.

Another condition yielding that the conclusion of Theorem~\ref{T1i} and thus
Theorem~\ref{T1f} holds involves the \emph{branch set} $B_f$ which is defined as
the set of all points where $f$ fails to be locally injective.
The \emph{local index} $i(x,f)$ of a quasi\-regular map $f$ at
a point $x$ is defined by
$$
i(x,f) =\inf_U \sup_{y\in\R^n}\card \left( f^{-1}(y)\cap U\right),
$$
where the infimum is taken over all neighbourhoods $U$ of~$x$.
Thus
$$B_f=\{x: i(x,f)>1\}.$$
With this notation we have the following analogue of~\cite[Theorem~1.8]{FJ}.
\begin{theorem}\label{T1j}
Let $f$ be an entire \qr map of transcendental type.
If $J(f)\cap B_f=\emptyset$ or, more generally, if
the local index is bounded on $J(f)$,
then conclusions~{\rm(ii)}, {\rm(iv)} and~{\rm(v)} of Theorem~\ref{T1f} hold and $\capacity J(f)>0$.

If the local index is bounded on $\R^n$, then the conclusions of Theorems~\ref{T1f} and~\ref{T1i} hold.
\end{theorem}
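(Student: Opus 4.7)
The plan is to model the argument on~\cite[Theorem~1.8]{FJ}, where the same conclusions are established for \qr self-maps of $S^n$; the adaptation to the transcendental setting replaces compactness arguments by Rickman's Picard-type theorem. The central technical input is a consequence of Rickman's $K_O$-inequality: if $f$ has local index bounded by $M$ on an open set $\Omega$, then preimages under $f$ of condensers lying in $f(\Omega)$ are controlled in capacity by a constant depending only on $M$, $K_I(f)$ and~$n$. Iterating this estimate along chains of preimages that remain in $\Omega$ yields a lower bound on $\capacity\overline{O^-_f(x)}$ whenever the full backward orbit of $x$ lies in $\Omega$. Note that the hypothesis ``local index bounded on $J(f)$'' extends by upper semicontinuity of $i(\cdot,f)$ to a bound on an open neighbourhood of $J(f)$, so such an $\Omega$ exists.

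Assume first that the local index is bounded on $J(f)$. The first step is to show $\capacity J(f)>0$. By Theorem~\ref{T1a} and Rickman's theorem, $J(f)\setminus E(f)$ is infinite; pick $y_0$ in it. Then $O^-_f(y_0)\subset J(f)$ is infinite, and iterating the pullback estimate along preimages of $y_0$ (which all stay in $J(f)$, hence in $\Omega$) produces $\capacity\overline{O^-_f(y_0)}>0$, so $\capacity J(f)>0$. Conclusion~(ii) then follows: for $x\in J(f)\setminus E(f)$ and $y\in J(f)$ with neighbourhood~$U$, set $W=\R^n\setminus O^+_f(U)$; this set is closed, has $\capacity W=0$ by the definition of $J(f)$, and is $f$-backward invariant. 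If $x\notin O^+_f(U)$, then $\overline{O^-_f(x)}\subset W$, contradicting $\capacity\overline{O^-_f(x)}>0$ given by the pullback estimate. Hence $x\in O^+_f(U)$, proving $J(f)\subset\overline{O^-_f(x)}$; the reverse inclusion is complete invariance. Conclusion~(iv) is then immediate: an isolated $y\in J(f)$ cannot be a limit point of $O^-_f(z)\subset J(f)$, so $y\in O^-_f(z)$ for every $z\in J(f)\setminus E(f)$, forcing $J(f)\setminus E(f)\subset O^+_f(y)$, a countable set, which contradicts $\capacity J(f)>0$. For~(v), the inclusion $J(f^p)\subset J(f)$ is immediate from $\bigcup_k f^{pk}(U)\subset\bigcup_k f^k(U)$; the reverse follows by applying the whole argument with $f^p$ in place of~$f$, noting that the local index of $f^p$ is bounded by $M^p$ on the $f$-invariant set $J(f)$, which is available before we know $J(f)=J(f^p)$.

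For the stronger hypothesis that the local index is bounded on all of $\R^n$, the pullback estimate applies globally. One then reruns the proofs of Theorems~\ref{T1f} and~\ref{T1i}, replacing each appeal to local Lipschitz continuity by the pullback estimate: in both arguments Lipschitz continuity was used only to give a capacity lower bound on preimages, which the $K_O$-inequality with bounded multiplicity supplies directly. In particular Theorem~\ref{T1i} yields $\capacity\overline{O^-_f(x)}>0$ for all $x\in\R^n\setminus E(f)$, and conclusions (i)--(v) of Theorem~\ref{T1f} follow as there.

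The main obstacle I anticipate is iterating the pullback estimate cleanly: one must ensure that the normal domains arising along the preimage chain do not degenerate geometrically, which is where the non-compactness of $\R^n$ and the transcendental growth of $f$ demand more care than in the spherical setting of~\cite{FJ}. A secondary subtlety appears in~(v), where the bound on $i(\cdot,f^p)$ is needed on $J(f)$ before the equality $J(f)=J(f^p)$ has been established; forward invariance of $J(f)$ under $f$ resolves this cleanly.
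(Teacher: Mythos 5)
There is a genuine gap at the heart of your argument: the ``pullback estimate'' you take as the central technical input does not do the work you ask of it, and you never supply the ingredient that actually makes bounded local index useful. An inequality of the type in Lemma~\ref{lemma2i} (or the $K_O$-inequality) controls capacities of condensers whose plates are full preimages of compact sets; it says nothing about $\capacity\overline{O^-_f(x)}$, which is the closure of a \emph{countable} set, and iterating it ``along chains of preimages'' cannot by itself produce a positive lower bound for that closure. What the paper actually does (Theorem~\ref{T5c}) is: reduce via the pits-effect dichotomy -- if $f$ does not have the pits effect, Theorem~\ref{T1i} already gives the conclusion with no index hypothesis; if it does, the rescalings $g_m(x)=f(R_mx)/R_m$ of section~\ref{F w p e} yield proper maps $G\colon U\to B(0,M)$ whose degree $D=n(MR_m,0,f)\to\infty$ by transcendence, so one may take $D>K_I(f)\max\{i(y,f):y\in\overline{O^-_f(x)}\}$. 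Only then does the index bound bite: if preimages of points of $X=\overline{O^-_G(x/R_m)}$ could be covered by fewer than $P$ balls ($P$ the least integer exceeding $K_I(f)$), a limit point would have local index at least $D/K_I(f)$, contradicting the choice of $D$. The resulting $P$ uniformly separated preimages are then fed into Lemma~\ref{lemma2d}(ii) (via H\"older continuity, Lemma~\ref{lemma2m}) and Wallin's theorem (Lemma~\ref{lemma2f}) to get $\capacity X>0$. Your proposal contains neither the large-degree proper maps (nowhere do you use that $f$ is of transcendental type, beyond citing Rickman's theorem, which plays no role here) nor the separated-preimages/gauge-Hausdorff-measure mechanism; with only ``local index bounded by $M$ near $J(f)$'' and the $K_I$/$K_O$ inequalities, nothing prevents every point from having just one or two preimages, and no capacity lower bound for $\overline{O^-_f(x)}$ follows. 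The difficulty you flag at the end (``normal domains degenerating geometrically'') is a symptom of exactly this missing reduction.

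Two smaller points. For conclusion (v), ``apply the whole argument with $f^p$ in place of $f$'' only yields properties of $J(f^p)$; it does not give the inclusion $J(f)\subset J(f^p)$. The paper needs the separate argument of Theorem~\ref{T5a1}: starting from $x\in J(f)$ and $\capacity J(f^p)>0$, one finds $y=f^m(z)\in J(f^p)$ with $z\in U$, writes $m=kp-l$, and pushes the capacity-zero set $\R^n\backslash O^+_{f^p}(f^m(U))$ forward by $f^l$, using that quasiregular maps preserve capacity zero and that $E(f^l)$ is finite; your sketch omits this entirely. Finally, your treatment of the second part (``rerun Theorems~\ref{T1f} and~\ref{T1i}, replacing Lipschitz continuity by the pullback estimate'') mischaracterizes those proofs -- Theorem~\ref{T1i} uses no Lipschitz hypothesis, and in Theorem~\ref{T1f} the Lipschitz condition enters through Lemma~\ref{lemma2d}(i), not through a capacity pullback -- whereas in the paper this part is immediate: once the index is bounded on all of $\R^n$, Theorem~\ref{T5c} gives $\capacity\overline{O^-_f(x)}>0$ for every $x\notin E(f)$, and Theorem~\ref{T5a} then delivers conclusions (i)--(iv). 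Your soft deductions of (ii) and (iv) from the capacity statements are fine, but they rest on the unproved core.
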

We note that rational functions $f$ for which $B_f$ is contained
in attracting basins (and thus, in particular, $J(f)\cap B_f=\emptyset$)
are called \emph{hyperbolic} or \emph{expanding}. They play an important role in complex
dynamics; cf.~\cite{Milnor06,Steinmetz93}. The concept of hyperbolicity has also
been extended to transcendental dynamics, see, e.g.,~\cite{Rippon1999}.

Additional hypotheses like Lipschitz continuity or not having
the pits effect are not needed in dimension~$2$.
\begin{theorem}\label{T1k} Let $f\colon \C\to \C$ be a \qr map of
transcendental type.
Then the conclusions of Theorems~\ref{T1f} and~\ref{T1i} hold
and $\capacity J(f)>0$.
\end{theorem}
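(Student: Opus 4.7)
The plan is to exploit the Stoilow factorization theorem, a phenomenon available only in dimension two, to bypass the Lipschitz, pits-effect and bounded-local-index hypotheses used in the higher-dimensional arguments. Since $f\colon\C\to\C$ is quasiregular, Stoilow's theorem produces a factorization $f=h\circ\psi$ in which $\psi\colon\C\to\C$ is a quasiconformal homeomorphism and $h\colon\C\to\C$ is holomorphic; because $f$ is of transcendental type, $h$ must be a transcendental entire function. A direct computation gives $\psi\circ f\circ\psi^{-1}=\psi\circ h=:F$, so $\psi$ is a global quasiconformal conjugacy between $f$ and~$F$. As $J(\cdot)$, perfectness, density properties of backward orbits and positivity of capacity are all quasiconformal invariants, it suffices to prove all the claims for $F$.

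The key observation about $F=\psi\circ h$ is that its branch structure is inherited from the holomorphic $h$: $B_F=B_h=\{h'=0\}$ is discrete, and $i(x,F)=i(x,h)$ is finite at every point of $\C$. Although this local index need not be bounded globally, three planar tools compensate for the hypotheses used in higher dimensions. First, Koebe's distortion and quarter theorems, applied to univalent branches of $h^{-k}$ on simply connected domains avoiding the post-critical set, yield uniform lower bounds on diameters of preimage components in terms of derivatives, with no metric assumption on~$f$. Second, Miniowitz's planar version of Montel's theorem for quasiregular families -- a family of $K$-quasiregular planar maps omitting a set of positive capacity is normal -- replaces normal-family arguments that required Lipschitz control. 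Third, Rickman's Picard theorem in the plane reduces to the classical statement that $\card E(F)\le 1$ for transcendental~$F$. With these tools, the proofs of (i)--(iii) of Theorem~\ref{T1f} go through by applying Miniowitz to iterates of $F$ on a neighbourhood of a point of $J(F)$ and using density of the backward orbit; conclusions (iv) and (v) then follow formally as in \cite{FJ}, using Theorem~\ref{T1a} to ensure $\card J(F)=\infty$.

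The main obstacle is the conclusion of Theorem~\ref{T1i}, namely $\capacity\overline{O^-_F(x)}>0$ for $x\in\C\setminus E(F)$. Here the pits-effect hypothesis used in higher dimensions is replaced by a direct geometric construction: working near infinity, where $h$ grows rapidly, one exhibits infinitely many disjoint simply connected domains on each of which a univalent branch of $h^{-1}$ is defined, and then applies Koebe's quarter theorem to obtain lower bounds on the diameters of the resulting pre-image domains. Composing with the quasiconformal $\psi^{-1}$, whose H\"older behaviour preserves capacity positivity, these pre-image domains persist at every level of the iteration $F^{-k}$ and produce a Cantor-type subset of $\overline{O^-_F(x)}$ of positive capacity. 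Finally, $\capacity J(f)>0$ follows from $\capacity\overline{O^-_f(x)}>0$ together with complete invariance of $J(f)$, as in Corollary~\ref{C1a}.
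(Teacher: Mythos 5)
There is a genuine gap, and it sits exactly where the real difficulty of the theorem lies. Your Stoilow factorization $f=h\circ\psi$ and the conjugacy $F=\psi\circ f\circ\psi^{-1}=\psi\circ h$ are fine, but they do not reduce the dynamics to that of the entire function $h$: the iterates $F^k=(\psi\circ h)^k$ interleave the quasiconformal factor at every step, so an inverse branch of $F^k$ is a $k$-fold alternating composition of branches of $h^{-1}$ with $\psi^{-1}$, not a branch of $h^{-k}$. Koebe distortion controls only the holomorphic pieces, and the dilatation of the composition grows like $K^k$, so there is no uniform distortion (and no uniform H\"older exponent) along the backward orbit. Consequently the key assertion that your preimage domains ``persist at every level of the iteration $F^{-k}$ and produce a Cantor-type subset of $\overline{O^-_F(x)}$ of positive capacity'' is precisely the quantitative statement that needs proof, and nothing in the sketch supplies it. What is needed is a count of well-separated preimages per point that beats the inner dilatation, so that Lemma~\ref{lemma2d}(ii) (with the H\"older exponent of Lemma~\ref{lemma2m}) combined with Lemma~\ref{lemma2f} yields positive capacity. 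The paper obtains this count in dimension $2$ by first reducing, via Theorem~\ref{T1h}, to maps with the pits effect, and then applying the Riemann--Hurwitz formula to the proper maps $G\colon V_j\to B(0,M)$ of Section~\ref{F w p e}: counting critical points gives $L_2\geq (L-1)D\geq D>K_I(G)^2\geq K_I(G^2)$, i.e.\ enough pairwise disjoint second-level domains to beat $K_I(G^2)$. Your sketch has no analogue of this counting step.

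A second, independent problem is the proposed use of Miniowitz's theorem ``applied to iterates of $F$'': Lemma~\ref{lemma2b} requires a uniform dilatation bound for the family, and the family $\{F^k\}$ is not uniformly quasiregular, so normality arguments for the iterates are unavailable. (This is the very reason the paper defines $J(f)$ through capacity rather than non-normality, and applies Miniowitz only to the rescalings $h_m$, $g_m$, which all have the same dilatation as $f$.) In the paper, conclusions (i)--(iv) of Theorem~\ref{T1f} need no normal-family argument at all: they follow from the capacity statement $\capacity\overline{O^-_f(x)}>0$ via Theorem~\ref{T5a}, and (v) follows from $\capacity J(f^p)>0$ via Theorem~\ref{T5a1}. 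So your route for (i)--(iii) is both invalid as stated and unnecessary once the capacity statement is proved; the missing content of your proposal is exactly that capacity statement.
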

This paper is organized as follows.
In section~\ref{quasiregular} we recall the definition of
quasiregularity, capacity and some other concepts needed and we
collect a number of results that are used in the sequel.
Section~\ref{F w/o p e} deals with functions not having the pits
effect. We prove Theorems~\ref{T1a} and~\ref{T1d} for such
functions, and we also prove Theorem~\ref{T1i}.
In section~\ref{F w p e} we consider functions with pits
effect and prove Theorems~\ref{T1a} and~\ref{T1d} for such
functions.
Theorems~\ref{T1b}, \ref{T1c}, \ref{T1e}, \ref{T1f}, \ref{T1g},
\ref{T1h} and \ref{T1j} are proved in section~\ref{ProofT1bcefgh}.
The $2$-dimensional case is then considered in section~\ref{2dcase},
where Theorem~\ref{T1k} is proved. Various examples illustrating our
results are considered
in section~\ref{Examples}.
Finally, some consequences of Harnack's inequality
are discussed in section~\ref{Harnack}. In particular, we show
that the definition of the pits effect can be modified as
indicated above.

\section{Quasiregular maps, capacity and Hausdorff measure}\label{quasiregular}
We refer to the monographs~\cite{Reshetnyak89,Rickman93} for
a detailed treatment of quasi\-regular maps.
Here we only recall the definition and the main properties
needed.

For $n\geq 2$, a domain $\Omega\subset \R^n$ and
$1\leq p<\infty$, the \emph{Sobolev space}  $W^1_{p,\text{loc}}(\Omega)$
consists of the functions  $f=(f_1,\dots,f_n)\colon \Omega\to \R^n$ for which
all first order weak partial derivatives
$\partial_k f_j$ exist and are locally in $L^p$.
A~continuous map $f\in W^1_{n,\text{loc}}(\Omega)$ is called \emph{quasi\-regular}
if there exists a constant $K_O\geq 1$ such that
\begin{equation}\label{2a}
|Df(x)|^n\leq K_O J_f(x)
\quad \text{ a.e.},
\end{equation}
where $Df(x)$ denotes the derivative,
\[
|Df(x)|=\sup_{|h|=1} |Df(x)(h)|
\]
its norm, and $J_f(x)$ the Jacobian determinant.
Put
\[
\ell(Df(x))=\inf_{|h|=1}|Df(x)(h)|.
\]
The condition that~\eqref{2a} holds for some $K_O\geq 1$ is equivalent to
the condition that
\begin{equation}\label{2b}
J_f(x)\leq K_I\ell(Df(x))^n
\quad \text{ a.e.},
\end{equation}
for some $K_I\geq 1$. The smallest constants $K_O$ and $K_I$
satisfying~\eqref{2a} and~\eqref{2b} are called the \emph{outer and inner dilatation}
of $f$ and are denoted by $K_O(f)$ and $K_I(f)$.  We call
$K(f)=\max\{K_I(f),K_O(f)\}$ the (maximal) \emph{dilatation} of~$f$ and,
for $K\geq 1$, say that $f$ is $K$-\emph{quasi\-regular} if $K(f)\leq K$.

If $f$ and $g$ are quasi\-regular, with $f$ defined in the range of~$g$, then
$f\circ g$ is also quasi\-regular and~\cite[Theorem~II.6.8]{Rickman93}
\begin{equation}\label{2c}
K_I(f\circ g)\leq K_I(f)K_I(g)
\quad\text{and}\quad
K_O(f\circ g)\leq K_O(f)K_O(g)
\end{equation}
so that $K(f\circ g)\leq K(f)K(g)$.

Quasiregularity can be defined more generally for maps between
Riemannian manifolds. Here we only need the case of
quasi\-regular maps $f\colon \Omega \to \overline{\R^n}$ where $\Omega\subset\overline{\R^n}$.
Such maps are called \emph{quasi\-meromorphic}.  It turns out that
a non-constant continuous map $f\colon \Omega \to \overline{\R^n}$
is quasi\-meromorphic if $f^{-1}(\infty)$ is discrete and $f$ is quasi\-regular
in $\Omega\backslash (f^{-1}(\infty)\cup\{\infty\} )$.

Many properties  of holomorphic  functions carry over to quasi\-regular maps.
For example, non-constant quasi\-regular maps are open and discrete.
A key result already mentioned in the introduction
is Rickman's analogue~\cite{Rickman80,Rickman85} of Picard's Theorem.
\begin{lemma}\label{rickman-thm}
For $n\geq 2$ and $K\geq 1$  there
exists a constant $q=q(n,K)$ with the following property: if
$a_1,\dots,a_q\in\R^n$ are distinct, then every $K$-quasi\-regular map
$f\colon \R^n\to \R^n\backslash \{a_1,\dots,a_q\}$ is constant and every
$K$-quasi\-regular map $f\colon \R^n\to \R^n$ for which
$f^{-1}(\{a_1,\dots,a_q\})$ is finite is of polynomial type.
\end{lemma}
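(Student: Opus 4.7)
The strategy is to set up a Nevanlinna-type value distribution theory for $K$-quasi-regular maps $f\colon\R^n\to\R^n$ and then read off both statements from a second main theorem. For $r>0$ and $a\in\R^n$, define the counting function $n(r,a)=\sum_{x\in f^{-1}(a)\cap \overline{B(0,r)}} i(x,f)$ and its logarithmic average $N(r,a)$; introduce also the spherical area
$$A(r)=\int_{B(0,r)} \frac{J_f(x)}{(1+|f(x)|^2)^n}\,dx$$
and the characteristic $T(r,f)=\int_0^r A(t)\,t^{1-n}\,dt$. A First Main Theorem, obtained by integrating the pull-back under $f$ of the spherical volume form on $\R^n\cup\{\infty\}$, yields $N(r,a)\le T(r,f)+O(1)$ for every $a\in\R^n$.

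The central step, and the main obstacle, is Rickman's Second Main Theorem: there should be a constant $C=C(n,K)$ such that for any distinct points $a_1,\dots,a_q\in\R^n$,
$$(q-C)\,T(r,f)\le \sum_{j=1}^q N(r,a_j)+S(r,f),$$
with an error $S(r,f)=O\bigl(\log(r\,T(r,f))\bigr)$ off an exceptional set of finite linear measure. In the absence of a lemma on the logarithmic derivative, Rickman's proof proceeds by the \emph{hunt} technique: for each $a_j$, one locates a finite chain of small balls on which $f$ covers a fixed neighbourhood of $a_j$ with controlled distortion, and then estimates the modulus of the family of paths joining these balls to the sphere of radius $r$ using the Poletski-type modulus inequality and V\"ais\"al\"a's modulus comparison estimates. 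These bounds, combined with the First Main Theorem, deliver the defect inequality above; this is where the dependence of $q$ on $n$ and $K$ enters through the constants in the modulus estimates.

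Both conclusions of the lemma now follow by choosing $q=q(n,K)$ strictly greater than $C$. If $f(\R^n)\subset \R^n\setminus\{a_1,\dots,a_q\}$, then $N(r,a_j)\equiv 0$ for every $j$, so the second main theorem forces $T(r,f)$ to remain bounded as $r\to\infty$; hence $A(r)\equiv 0$, so $J_f=0$ almost everywhere, and then quasi-regularity forces $f$ to be constant. For the second assertion, finiteness of each preimage set $f^{-1}(a_j)$ gives $N(r,a_j)=O(\log r)$, whence $T(r,f)=O(\log r)$. A standard growth dichotomy for entire quasi-regular maps (obtained from the V\"ais\"al\"a modulus inequality applied to rings centred at a putative limit point) then shows that such slow growth of $T$ rules out the existence of a sequence $x_k\to\infty$ with $f(x_k)$ bounded, so $\lim_{|x|\to\infty}|f(x)|=\infty$ and $f$ is of polynomial type, as required.
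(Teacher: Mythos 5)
The paper does not prove this lemma at all: it is Rickman's Picard-type theorem, quoted with citations to Rickman's papers, and it is one of the deep external inputs of the whole theory. Your proposal does not supply an independent proof either, because it routes everything through ``Rickman's Second Main Theorem'' for quasiregular maps, a result at least as deep as the lemma itself, and the paragraph describing the hunt technique and modulus estimates is a description of where such a proof lives, not an argument. So the central step is a black box, and nothing in the proposal could be checked or filled in without essentially reproducing Rickman's work.

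Even taking the defect relation for granted, several of your reductions are not sound as stated. First, the pointwise First Main Theorem $N(r,a)\le T(r,f)+O(1)$ for every single value $a$ is not available for quasiregular maps in dimension $n\ge 3$; what exists are averaged statements of Mattila--Rickman type (compare Lemma~\ref{lemma2c} of the paper), and Rickman's defect relation is formulated in terms of $n(r,a_j)/A(r)$ along radii outside a set of finite logarithmic measure, not with an error term $S(r,f)=O(\log (rT(r,f)))$, which is a feature of the classical $n=2$ theory. Second, with your normalization $T(r,f)=\int_0^r A(t)\,t^{1-n}dt$ the weight $t^{1-n}$ is integrable at infinity when $n\ge 3$, so boundedness of $T$ does \emph{not} force $A\equiv 0$: a nonconstant map with $A(t)\ge c>0$ for large $t$ still has bounded $T$. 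Moreover, even your own inequality with all $N(r,a_j)\equiv 0$ would only yield $T(r)=O(\log r)$ off the exceptional set, not boundedness, so the chain ``omitted values $\Rightarrow T$ bounded $\Rightarrow A\equiv 0 \Rightarrow J_f=0$ a.e.\ $\Rightarrow f$ constant'' breaks at its first two links. Third, the polynomial-type assertion is handled by an appeal to a ``standard growth dichotomy'' that is not made precise; deducing $\lim_{x\to\infty}|f(x)|=\infty$ from slow growth of $T$ (or from bounded $A(r)$) requires a genuine argument, and in Rickman's treatment this second statement has its own proof. In short, the proposal identifies the right circle of ideas but contains no self-contained proof, and the concrete deductions it does spell out would fail as written.
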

The number $q(n,K)$ is called the \emph{Rickman constant}.
Note that Picard's  theorem says that $q(2,1)=2$.

The following normal family analogue of Rickman's Theorem was obtained by
Miniowitz~\cite[Theorem~5]{Miniowitz82}.
Here $\chi(x,y)$ denotes the chordal distance of two points $x,y\in\RR$.
Thus  $\chi(x,y)=|\pi^{-1}(x)-\pi^{-1}(y)|$ with the
stereographic projection $\pi\colon S^n\to\RR$.
\begin{lemma}\label{lemma2b}
Let $\Omega\subset \R^n$ be a domain and
let $\mathcal{F}$ be a family of $K$-quasi\-mero\-morphic mappings
on~$\Omega$.
Let $q=q(n,K)$ be the Rickman constant and
suppose that there exists $\delta>0$ with the following property: for each
$f\in\mathcal{F}$ there exist $a_1^f,\dots,a_{q+1}^f\in\RR$
satisfying $\chi(a_i^f,a_j^f)\geq\delta$ for $i\neq j$
such that $f(x)\neq a_j^f$ for all $x\in \Omega$ and $1\leq j\leq q+1$.
Then $\mathcal{F}$ is normal.
\end{lemma}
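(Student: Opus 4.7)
The plan is to follow the strategy of Miniowitz's original paper, which is the quasiregular analogue of the Zalcman--Montel approach to normality. Suppose for contradiction that $\mathcal{F}$ is not normal at some point $x_0\in\Omega$. Then in the chordal metric on $\overline{\R^n}$ the family fails to be equicontinuous at $x_0$.

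The key tool is a Zalcman-type rescaling lemma for $K$-quasimeromorphic maps. This yields a sequence $f_k\in\mathcal{F}$, points $x_k\to x_0$, and radii $\rho_k\to 0^+$ such that the rescaled maps
\[
g_k(y)=f_k(x_k+\rho_k y)
\]
converge locally uniformly on $\R^n$ in the chordal metric to a non-constant $K$-quasimeromorphic map $g\colon\R^n\to\overline{\R^n}$. The existence of this limit uses only the uniform bound on dilatation and the precompactness properties of $K$-quasimeromorphic families, together with the standard Zalcman trick of choosing $\rho_k$ via the maximum of a chordal derivative-type quantity.

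Next, for each $k$ the map $f_k$ omits $q+1$ values $a_1^{f_k},\dots,a_{q+1}^{f_k}$ that are pairwise $\delta$-separated in the chordal metric. Since $\overline{\R^n}$ is compact, after passing to a subsequence we may assume $a_j^{f_k}\to a_j$ as $k\to\infty$ for each $j$, and the closed condition $\chi(a_i,a_j)\ge\delta$ persists, so the limits $a_1,\dots,a_{q+1}$ are distinct. A Hurwitz-type argument (using that non-constant quasimeromorphic maps are open and discrete) shows that the limit $g$ must omit each $a_j$: if $g(y_0)=a_j$ for some $y_0$, then openness of $g$ forces $g_k$ to take values arbitrarily close to $a_j$ on small neighbourhoods of $y_0$ for large $k$, and hence eventually to take values equal to the nearby $a_j^{f_k}$, contradicting the omission hypothesis.

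Finally, apply Rickman's Picard theorem (Lemma~\ref{rickman-thm}). Choose a Möbius transformation $\phi$ of $\overline{\R^n}$ sending $a_{q+1}$ to $\infty$; since $\phi$ is $1$-quasiconformal in the chordal metric, the composition $\phi\circ g\colon\R^n\to\R^n$ is a non-constant $K$-quasiregular map that omits the $q$ distinct values $\phi(a_1),\dots,\phi(a_q)$. Lemma~\ref{rickman-thm} forces $\phi\circ g$ to be constant, contradicting non-constancy of $g$. The main obstacle is the first step: establishing the Zalcman-type rescaling lemma in the quasimeromorphic setting. This is precisely the technical heart of Miniowitz's argument, and the rest of the proof is, essentially, a direct substitute of Rickman's theorem for Picard's in the classical Montel proof.
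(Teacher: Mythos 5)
The paper gives no proof of this lemma at all — it is quoted from Miniowitz — and your sketch is essentially Miniowitz's original argument: a Zalcman-type rescaling lemma for $K$-quasimeromorphic maps, a subsequence along which the $\delta$-separated omitted values converge to $q+1$ distinct limits, a Hurwitz-type argument showing that the non-constant limit map omits these limits, and Rickman's theorem (Lemma~\ref{rickman-thm}) applied after a M\"obius change of variable to reach a contradiction; this is the standard route and it is correct. Two small points of care: the rescaling lemma you defer to is itself the main technical content of Miniowitz's paper, so your argument is a reduction to that result rather than a self-contained proof; and the Hurwitz step should be carried out via the topological degree (choose a small ball $B$ about $y_0$ with $a_j\notin g(\partial B)$, which is possible by discreteness, and use uniform convergence on $\overline{B}$ together with $a_j^{f_k}\to a_j$ to conclude $g_k$ takes the value $a_j^{f_k}$ in $B$), since ``values arbitrarily close, hence eventually equal'' is not by itself a valid inference; finally, choose the M\"obius transformation sending $a_{q+1}$ to $\infty$ sense-preserving so that the composition is again $K$-quasiregular.
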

A family $\mathcal{F}$ of functions quasiregular (or quasi\-mero\-morphic) in a domain
$\Omega$ is called \emph{quasinormal} if
every sequence in $\mathcal{F}$
has a subsequence which converges locally uniformly in
$\Omega\backslash E$ for some finite subset $E$ of $\Omega$.
Here the subset $E$ may depend on the sequence.
The following simple consequence of the maximum principle
is useful in dealing with sequences converging outside a finite set;
see, e.g., \cite[Lemma~3.2]{Bergweiler06}.
\begin{lemma} \label{lemma2g}
Let $\Omega\subset \R^n$ be a domain and let $(f_k)$ be
a non-normal sequence of functions
which are $K$-quasi\-regular in~$\Omega$.
If $(f_k)$ converges locally
uniformly in $\Omega\backslash E$ for some finite set~$E$,
then $f_k\to\infty$ in $\Omega\backslash E$.
\end{lemma}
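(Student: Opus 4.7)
The plan is to argue by contradiction: assume $(f_k)$ does not tend to $\infty$ on $\Omega\setminus E$, and deduce that $(f_k)$ is normal on $\Omega$. Denote by $g$ the (chordal) limit of $(f_k)$ on $\Omega\setminus E$; our assumption provides some $x_0\in\Omega\setminus E$ with $g(x_0)\neq\infty$. The first step is to invoke the standard convergence theorem for quasi\-regular mappings (see, e.g., \cite{Rickman93}): a locally uniform chordal limit of $K$-quasi\-regular maps on a connected domain is either identically $\infty$ or is $K$-quasi\-meromorphic. Since $\Omega\setminus E$ is connected (as $n\geq 2$ and $E$ is finite) and $g(x_0)\neq\infty$, $g$ is quasi\-meromorphic on $\Omega\setminus E$, so its pole set $P=g^{-1}(\infty)$ is a discrete, at most countable subset of $\Omega\setminus E$.

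Next, for each $e\in E$ I would choose $r_e>0$ small enough that $\overline{B(e,r_e)}\subset\Omega$, the closed balls $\overline{B(e,r_e)}$ for $e\in E$ are pairwise disjoint and meet $E$ only at their centres, and $\partial B(e,r_e)\cap P=\emptyset$; this last condition is achievable because $P$ is countable. Then $g$ is finite and continuous on the compact set $\partial B(e,r_e)$, hence bounded by some $M_e<\infty$, and the uniform convergence $f_k\to g$ on $\partial B(e,r_e)$ yields $|f_k|\le M_e+1$ on $\partial B(e,r_e)$ for all sufficiently large $k$. Since every non-constant quasi\-regular map is open, the maximum principle for quasi\-regular maps gives $|f_k|\le M_e+1$ throughout $\overline{B(e,r_e)}$ for all such $k$. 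Thus $(f_k)$ is locally uniformly bounded on a neighbourhood of every point of $E$, and a standard equicontinuity result for $K$-quasi\-regular maps bounded by a fixed constant makes $(f_k)$ normal there.

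Combined with the fact that local uniform convergence on $\Omega\setminus E$ trivially makes $(f_k)$ normal on $\Omega\setminus E$, and since normality is a local property, $(f_k)$ is normal on the whole of $\Omega$, contradicting the hypothesis. The main obstacle is the first step: one needs the convergence theorem for $K$-quasi\-regular maps to rule out a pathological limit $g$ and know that the pole set $P$ is discrete, so that spheres avoiding $P$ are available arbitrarily close to each $e\in E$. Given that, the bounding of $(f_k)$ near $E$ and the deduction of normality are routine applications of the maximum principle for open mappings.
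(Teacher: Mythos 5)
Your argument is correct and takes essentially the approach the paper intends for this lemma, which it states as ``a simple consequence of the maximum principle'' and quotes from \cite[Lemma~3.2]{Bergweiler06}: bound $(f_k)$ on small spheres around the points of $E$ using the finite limit there, push the bound inside by the maximum principle for open (quasiregular) maps, deduce normality near $E$, and contradict the assumed non-normality. Your additional care in choosing spheres that avoid the (discrete) pole set of the limit is a harmless, slightly more self-contained variant that avoids appealing to a Hurwitz-type theorem for quasiregular limits; otherwise the argument coincides with the cited proof.
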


The next result is the analogue of Liouville's Theorem for \qr maps;
see, e.g., \cite[Lemma~3.4]{Bergweiler06}.
Here  $M(r,f)$ denotes the maximum modulus of a \qr map~$f$; that is,
$M(r,f)=\max_{|x|=r}|f(x)|$.
\begin{lemma}\label{lemma2a}
Let $f$ be an entire \qr map of transcendental type.
Then
$$
\lim_{r\to\infty}\frac{\log M(r,f)}{\log r}=\infty .
$$
\end{lemma}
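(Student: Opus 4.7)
My plan is to argue by contradiction, using a Hadamard-type three-circle inequality for \qr maps together with the removable singularity theorem at $\infty$. Suppose the conclusion fails: then there exist $\alpha>0$ and a sequence $r_k\to\infty$ with $M(r_k,f)\le r_k^\alpha$. The aim is to propagate this polynomial bound along the sequence into a polynomial bound valid for every large $r$, which would force $f$ to extend to a \qr self-map of $S^n$ fixing $\infty$, and hence to be of polynomial type, contradicting the hypothesis.

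The key tool is a quasi-convexity of $r\mapsto\log M(r,f)$ in $\log r$: for some constant $K'=K'(K(f))$ and for $0<r_1<r_2<r_3$,
\[
\log M(r_2,f)-\log M(r_1,f)\le K'\,\frac{\log(r_2/r_1)}{\log(r_3/r_1)}\bigl(\log M(r_3,f)-\log M(r_1,f)\bigr).
\]
This is a consequence of the V\"ais\"al\"a modulus inequality applied to the radial path family in the spherical shell $\{r_1<|x|<r_3\}$, together with the fact that $\log|f|$ is an $\mathcal A$-subharmonic function for a suitable elliptic operator $\mathcal A$. Setting $r_1=1$, $r_2=r$ arbitrary with $r>1$, and $r_3=r_k\to\infty$, and substituting $\log M(r_k,f)\le\alpha\log r_k$, the right-hand side is bounded by $K'\alpha\log r - K'(\log r/\log r_k)\log M(1,f)$; the last term tends to zero as $k\to\infty$. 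Hence
\[
\log M(r,f)\le\log M(1,f)+K'\alpha\log r \qquad\text{for all }r>1,
\]
so $M(r,f)=O(r^{K'\alpha})$.

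Polynomial growth now forces $f$ to extend quasiregularly across $\infty$: apply the removable singularity theorem for \qr maps to the composition $\phi\circ f\circ\phi^{-1}$ near $y=0$, where $\phi(x)=x/|x|^2$ is the standard inversion. The extension $\bar f\colon S^n\to S^n$ must satisfy $\bar f(\infty)=\infty$; otherwise $f$ would be bounded near $\infty$, hence bounded on $\R^n$, hence constant by the \qr Liouville theorem, contradicting the non-triviality of $f$. Therefore $f$ is of polynomial type, a contradiction. The principal technical obstacle is the three-circle inequality in the \qr setting; although not listed among the facts collected in Section~\ref{quasiregular}, it is classical and follows from V\"ais\"al\"a's modulus estimates (cf.\ \cite{Rickman93}). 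An alternative, more elementary route would rescale $h_k(x)=f(r_kx)/r_k^\alpha$ on $\overline{B(0,1)}$, invoke Lemma~\ref{lemma2b} to extract a limit, and then use the abundance of preimages of generic values guaranteed by Lemma~\ref{rickman-thm} to force a contradiction; but preimages escaping to $\partial B(0,1)$ make this approach more delicate than the three-circle route.
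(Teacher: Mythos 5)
Your argument hinges on two steps that are not actually available, and unfortunately they are the two load-bearing ones. First, the ``three-circles inequality'' with a single multiplicative constant $K'$ in front of the ratio $\log(r_2/r_1)/\log(r_3/r_1)$ is asserted, not proved, and it is not among the quoted tools of the paper nor a standard quotable fact in this precise form. The precise form is critical: $\log|f|$ is not $n$-subharmonic but only a subsolution of a variable-coefficient equation of $n$-Laplacian type, and the comparison/harmonic-measure or modulus estimates one can actually extract for quasiregular maps (as in the qr Schwarz lemma, or Lemma~\ref{lemma2i} combined with the Gr\"otzsch estimates of Lemmas~\ref{lemma2k} and~\ref{lemma2l}) typically come with an exponent $1/(n-1)$, i.e.\ an estimate of the shape $\log M(r_2)-\log M(r_1)\le C\,(\log(r_2/r_1)/\log(r_3/r_1))^{1/(n-1)}(\log M(r_3)-\log M(r_1))$. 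With such an exponent your limiting step fails completely: taking $r_3=r_k\to\infty$ with $\log M(r_k)\le\alpha\log r_k$ gives an upper bound of order $(\log r_k)^{1-1/(n-1)}\to\infty$, not a polynomial bound on $M(r,f)$. So the reduction from ``$\liminf$ finite'' to ``$\limsup$ finite'' is unsupported.

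Second, even granting $M(r,f)=O(r^\beta)$ for \emph{all} large $r$, the final step is circular. The removable singularity theorem you invoke needs an upper bound on the inverted map near the singularity, but the polynomial bound on $|f|$ gives $|\phi(f(\phi^{-1}(y)))|=1/|f(y/|y|^2)|\ge |y|^\beta/C$, a \emph{lower} bound only; nothing prevents, a priori, an essential singularity at $\infty$ at which $|f|$ oscillates while $M(r,f)$ grows polynomially (note that on $\{|x|\ge R\}$ no value is omitted, so Rickman's big Picard theorem is not triggered either). Ruling out an essential singularity with polynomially bounded maximum modulus is exactly the content of the lemma, so at this point you are assuming what is to be proved; in the classical case this step is done with Cauchy estimates on the Laurent coefficients, which have no qr analogue. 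For what it is worth, the paper does not prove this lemma at all but cites \cite[Lemma~3.4]{Bergweiler06}; the standard proof runs through value distribution rather than interpolation of $\log M$: by Lemma~\ref{rickman-thm} a map of transcendental type has points with arbitrarily many preimages in some ball $B(0,\rho)$, and the capacity/counting-function machinery (the $K_I$-inequality of Lemma~\ref{lemma2i} together with Gr\"otzsch-type bounds, or the averaged counting function $A(r,f)$) converts this into super-polynomial growth of $M(r,f)$. Your closing ``more elementary route'' via rescaling and Lemma~\ref{lemma2b} points in roughly that direction, but as you yourself note it is only a sketch and does not yet handle the degeneration of preimages, so as it stands the proposal does not constitute a proof.
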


An important tool in the theory of \qr mappings is
the capacity of a condenser. We recall this concept briefly.
For an open set $A\subset\R^n$ and a non-empty compact subset $C$ of $A$, the pair
$(A,C)$ is called a \emph{condenser} and its  \emph{capacity}
$\capacity (A,C)$ is defined by
\[
\capacity (A,C)=\inf_u\int_A\left|\nabla u\right|^n dm,
\]
where the infimum is taken over all non-negative functions
$u\in C^\infty_0(A)$ satisfying $u(x)\geq 1$ for all $x\in C$.
Equivalently, one may take the infimum over all non-negative
$u\in W^1_{n,\text{loc}}(A)$ with compact support
and $u(x)\geq 1$ for all $x\in C$.
It follows directly from the definition that
\begin{equation}\label{lemma2j}
\capacity(A',C')\leq \capacity(A,C)
\quad\text{if}\
A'\supset A\ \text{and}\ C'\subset C.
\end{equation}
If $\capacity (A,C)=0$ for some bounded open set $A$ containing~$C$,
then  $\capacity (A',C)=0$ for every bounded open set $A'$ containing~$C$;
see~\cite[Lemma III.2.2]{Rickman93}.
In this case
we say that $C$ is of  \emph{capacity zero}. Otherwise we say that $C$ has
\emph{positive capacity}. We denote this by
$\capacity C=0$ or $\capacity C>0$, respectively.

For an unbounded closed subset $C$ of $\R^n$ we say that $C$ has capacity zero
if every compact subset of $C$ has capacity zero. Equivalently, we can
define condensers in $\RR$ and consider $\capacity (A,C\cup\{\infty\})$
for open subsets $A$ of $\RR$ with $\overline{A}\neq \RR$ which
contain $C\cup\{\infty\}$.

For $a\in\R^n$ and $r>0$, let $B^n(a,r)=\{x\in\R^n:|x-a|<r\}$ be the open ball,
$\overline{B^n}(a,r)$ the closed ball and $S^{n-1}(a,r)=\partial
B^n(a,r)$ the sphere of radius $r$ centred at~$a$.
We write $B^n(r)$, $\overline{B^n}(r)$ and $S^{n}(r)$
instead of $B^n(0,r)$, $\overline{B^n}(0,r)$ and $S^{n}(0,r)$.
If there is no risk of confusion, we omit the superscript $n$ for
the dimension.

For a quasi\-regular map $f\colon \Omega \to \RR$, a point $y\in\RR$ and
a Borel set $E$ such that $\overline{E}$ is a compact
subset of $\Omega$, we
denote by $n(E,y,f)$ the number of $y$-points of $f$ in~$E$,
counted according to multiplicity. Thus
$$
n(E,y,f)=\sum_{x\in f^{-1}(y)\cap E} i(x,f).
$$
The average of $n(E,y,f)$ over all $y\in \RR$ is denoted by $A(E,f)$.
Thus~\cite[p.~80]{Rickman93}
$$
A(E,f)=\frac{1}{\omega_{n}}\int_{\R^n} \frac{n(E,y,f)}{(1+|y|^2)^n}dy
=\frac{1}{\omega_{n}}\int_{E}
\frac{J_f(x)}{(1+|f(x)|^2)^n}dx,
$$
where $\omega_n$ denotes the volume of the $n$-dimensional unit ball.

Note that Rickman~\cite{Rickman93} identifies $\RR$ with $S^n(e_{n+1}/2,1/2)$
while we  have used $S^n=S^n(0,1)$, implying that the formulas differ by
a factor~$2^n$. Here and in the following $e_k$ denotes the $k$-th unit vector.

We will write $n(r,y,f)$ and  $A(r,f)$
instead of $n\!\left(\overline{B}(r),y,f\right)$
and $A\!\left(\overline{B}(r),f\right)$.

The following result~\cite[Theorem~II.10.11]{Rickman93} gives a
connection between capacity and quasiregularity.
\begin{lemma}\label{lemma2i}
Let $f\colon \Omega\to\R^n$ be quasi\-regular, let
$(A,C)$ be a condenser in $\Omega$ and put $m=\inf_{y\in f(C)}n(C,y,f)$. Then
$$
\capacity (f(A),f(C))\leq\frac{K_I(f)}{m} \capacity (A,C).
$$
\end{lemma}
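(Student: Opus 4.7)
The strategy is the standard pushforward-of-test-functions technique: from any admissible function $u$ for the preimage condenser $(A,C)$, build an admissible function $v$ for the image condenser $(f(A),f(C))$ whose Dirichlet $n$-energy is controlled by $K_I(f)/m$ times that of $u$.

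First, fix a non-negative $u\in W^1_{n,\mathrm{loc}}(A)$ with compact support in $A$ satisfying $u\geq 1$ on $C$, so that $u$ is admissible for $\capacity(A,C)$. Since non-constant quasiregular maps are discrete, the set $f^{-1}(y)\cap A$ is finite for every $y\in\R^n$. Define
\[
v(y)=\frac{1}{m}\sum_{x\in f^{-1}(y)\cap A}u(x)\,i(x,f),\qquad y\in f(A).
\]
Then $v$ vanishes outside the compact set $f(\mathrm{supp}\,u)\subset f(A)$. Restricting the sum to preimages lying in $C$ (where $u\geq 1$) gives, for $y\in f(C)$,
\[
v(y)\geq \frac{1}{m}\sum_{x\in f^{-1}(y)\cap C}i(x,f)=\frac{n(C,y,f)}{m}\geq 1,
\]
by the definition of $m$. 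Hence $v$ is admissible for $\capacity(f(A),f(C))$.

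The heart of the proof is the energy comparison
\[
\int_{f(A)}|\nabla v(y)|^n\,dy\ \leq\ \frac{K_I(f)}{m}\int_A|\nabla u(x)|^n\,dx.
\]
Off the branch set $B_f$, which is closed and has zero $n$-Lebesgue measure, $f$ is locally a quasiconformal homeomorphism, so locally $v=m^{-1}\sum_i u\circ g_i$ for inverse branches $g_i$ whose derivatives satisfy $|Dg_i(y)|\leq 1/\ell(Df(g_i(y)))$. The chain rule then yields the pointwise bound $|\nabla v(y)|\leq m^{-1}\sum_i |\nabla u(g_i(y))|/\ell(Df(g_i(y)))$. Applying H\"older's inequality to this sum of at most $n(A,y,f)$ terms, then the change-of-variables formula $\int_A h(x)J_f(x)\,dx=\int \sum_{x\in f^{-1}(y)}h(x)\,dy$, and finally the inner dilatation inequality \eqref{2b} $J_f\leq K_I(f)\ell(Df)^n$, produces the required estimate once the multiplicities $i(x,f)$ are tracked through the general (branched) version of the above sum.

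The main obstacle is that $v$ must be shown to lie in $W^1_{n,\mathrm{loc}}(f(A))$ despite the possibly bad behaviour of the lifting across $f(B_f)$, and the pointwise computation on the complement of $f(B_f)$ has to be promoted to a genuine Sobolev norm inequality. Both issues are controlled by the fact that $B_f$ (and hence $f(B_f)$) is negligible for $n$-dimensional Lebesgue measure, together with the continuity of $v$; this is precisely the part of the argument carried out in Rickman's monograph. Once the energy bound above is established, taking the infimum over admissible $u$ yields $\capacity(f(A),f(C))\leq (K_I(f)/m)\capacity(A,C)$, which is the claim.
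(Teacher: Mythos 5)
There is a genuine gap at the heart of your energy estimate, and it is exactly where the constant $K_I(f)/m$ has to come from. With your index-weighted average $v(y)=m^{-1}\sum_{x\in f^{-1}(y)\cap A}u(x)\,i(x,f)$, the pointwise bound off the branch set is $|\nabla v(y)|\le m^{-1}\sum_i|\nabla u(g_i(y))|/\ell(Df(g_i(y)))$, a sum with $N(y)$ terms where $N(y)$ is the number of preimages of $y$ in $A$ (or in $\mathrm{supp}\,u$). H\"older then gives $|\nabla v(y)|^n\le m^{-n}N(y)^{n-1}\sum_i|\nabla u(g_i(y))|^n/\ell(Df(g_i(y)))^n$, and after the change of variables and \eqref{2b} you arrive at a constant of the form $K_I(f)\,m^{-n}\sup_y N(y)^{n-1}$, not $K_I(f)/m$. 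The quantity $m=\inf_{y\in f(C)}n(C,y,f)$ is a \emph{lower} bound for the number of preimages lying in $C$; it gives no upper bound on $N(y)$, which is typically much larger than $m$ and is not controlled by the hypotheses. Indeed the failure is already pointwise: if at some $y$ there are $N>m$ branches with comparable gradients, the averaged function has $n$-energy density of order $(N/m)^n$ times that of a single branch, while the target inequality only allows $N/m$; so no bookkeeping of the multiplicities $i(x,f)$ can rescue the averaging ansatz. (A test-function argument can be made to produce the right constant if one replaces the average by order statistics of $u$ on the fibre: the functions $v_j(y)=j$-th largest value of $u$ on $f^{-1}(y)\cap A$, $1\le j\le m$, are all admissible for $(f(A),f(C))$, satisfy $\sum_{j=1}^m|\nabla v_j|^n\le\sum_i|\nabla u(g_i)|^n/\ell(Df(g_i))^n$ a.e.\ off $f(B_f)$, and a pigeonhole over $j$ gives the factor $1/m$ — but even then the Sobolev regularity of such fibrewise-defined functions is a serious matter, not a routine consequence of ``$f(B_f)$ has measure zero plus continuity''.)

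Your closing attribution is also inaccurate: the paper offers no proof of this lemma but cites Rickman's Theorem II.10.11, and Rickman's argument is not a pushforward of test functions. It runs through the modulus of path families: the capacity of a condenser equals the modulus of the family of paths connecting $C$ to $\partial A$ in $A$, every path connecting $f(C)$ to $\partial f(A)$ admits at least $m$ essentially separate maximal liftings, and V\"ais\"al\"a's inequality $M(\Gamma')\le (K_I(f)/m)\,M(\Gamma)$ (itself resting on Poletsky's absolute-continuity theorem for lifted paths) yields the claim. That is a genuinely different, and substantially deeper, route than the one you sketch.
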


The following result was proved in~\cite[Theorem 3.2]{FJ}, based on ideas
from~\cite{Mattila79}.
\begin{lemma}\label{lemma2c}
Let $F\subset \RR$ be a set of positive capacity and let
$\theta>1$. Then there exists a constant $C$ depending only
on~$n$, $F$ and $\theta$ such that if $f\colon B^n(\theta r)\to \RR\backslash F$
is quasi\-regular, then
$A(r,f)\leq C\, K_I(f)$.
\end{lemma}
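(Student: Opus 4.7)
The inequality $A(r,f)\le CK_I(f)$ has linear dependence on $K_I(f)$, which matches the shape of Rickman's capacity inequality (Lemma~\ref{lemma2i}). My plan is to derive the bound by applying that inequality, together with the fact that $f$ avoids the set $F$ of positive capacity. As a first step, since $\capacity F>0$, I would replace $F$ by a compact subset of positive capacity, and then apply a fixed Möbius transformation of $\RR$ (depending only on $F$) so that $F$ lies inside some bounded ball $\overline{B^n}(R_0)\subset \R^n$. Such a Möbius change affects $A(r,f)$ only by a constant depending on $F$, via the bilipschitz nature of Möbius transformations in the chordal metric and the chordal form of the definition of $A(r,f)$.

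Next, I would apply Lemma~\ref{lemma2i} to the condenser $(A,C)=(B^n(\theta r),\overline{B^n}(r))$, whose capacity is a constant $\kappa_n(\theta)=\omega_{n-1}(\log\theta)^{1-n}$ depending only on $n$ and $\theta$. This yields
\[
\capacity\bigl(f(A),f(C)\bigr)\cdot \min_{y\in f(C)} n(r,y,f)\le K_I(f)\,\kappa_n(\theta),
\]
so $\min_{y\in f(C)} n(r,y,f)$ is controlled once we have a lower bound on $\capacity(f(A),f(C))$. The crucial observation is that $F\subset\R^n\setminus f(A)$: since $F$ has positive capacity and its complement contains $f(A)$, any condenser of the form $(f(A),K)$ with $K\subset f(A)$ non-degenerate has capacity bounded below by a positive constant depending only on $n$ and $\capacity F$, a standard consequence of capacity monotonicity combined with the fact that sets of positive capacity cannot be avoided by arbitrarily ``fat'' condensers.

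To pass from this pointwise counting bound to a bound on the averaged quantity $A(r,f)$, I would apply the same argument to a family of target condensers centred at different points $y\in\RR$, obtained from the base condenser by auxiliary Möbius transformations, and integrate the resulting pointwise estimates against the chordal area element $(1+|y|^2)^{-n}dy$ that appears in the definition of $A(r,f)$. Each parameter $y$ then yields an inequality of the form $n(r,y,f)\le C(n,F,\theta)K_I(f)$ over a region of controlled chordal measure, and the weight $(1+|y|^2)^{-n}$ ensures that the contribution of the remaining region is bounded. The main obstacle will be the bookkeeping of this averaging: arranging the auxiliary Möbius transformations so that they depend only on $y$ and $F$, while controlling how they distort the domain condenser, so that the final constant depends only on $n$, $F$ and $\theta$ and the dependence on $K_I(f)$ stays linear. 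Keeping the latter point clean is the reason Lemma~\ref{lemma2i} must be invoked in its sharp form (with the $1/m$ factor) rather than via a Miniowitz-normality detour, which would introduce a dependence on the Rickman constant $q(n,K)$ and hence on $K$.
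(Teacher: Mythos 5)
Your reduction to Lemma~\ref{lemma2i} founders on the claimed lower bound for $\capacity(f(A),f(C))$. A condenser has small capacity not only when the complement of its open plate is thin, but also when its compact plate is small relative to the open plate, and nothing in the hypotheses prevents $f$ from compressing $\overline{B}(r)$ far more strongly than $B(\theta r)$. Concretely, take $n=2$, $\theta=2$, $r=1/4$, $F=\{z\in\C:|z|\ge 1\}\cup\{\infty\}$ and $f(z)=z^d$ on $B(1/2)$. Then $f$ is quasiregular (holomorphic, so $K_I(f)=1$) and omits $F$, yet
\[
\capacity\!\left(f(B(\theta r)),f\!\left(\overline{B}(r)\right)\right)
=\capacity\!\left(B(2^{-d}),\overline{B}(4^{-d})\right)=\frac{2\pi}{d\log 2}\longrightarrow 0 ,
\]
so no lower bound depending only on $n$ and $F$ exists. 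Worse, the intermediate conclusion you draw from it is itself false: here $n(r,y,f)=d$ for every $y\in f(\overline{B}(r))$, so $\min_{y\in f(C)}n(r,y,f)=d$ admits no bound of the form $C(n,F,\theta)K_I(f)$. The lemma survives in this example only because $f(\overline{B}(r))$ has chordal measure that is exponentially small in $d$, i.e.\ the large multiplicity sits on a set of tiny measure; this shows the averaging is not bookkeeping but the entire content of the statement.

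Consequently the second half of your plan is where the real work lies, and the sketch offers no workable mechanism for it: Lemma~\ref{lemma2i} pushes forward one fixed condenser in the domain and only ever controls $\inf_{y\in f(C)}n(C,y,f)$; post-composing with M\"obius maps of the target does not convert this into pointwise bounds $n(r,y,f)\le C\,K_I(f)$ at prescribed points $y$, and replacing the inner plate by preimages of small chordal balls about $y$ destroys the feature that $\capacity(A,C)$ is the explicit constant $\omega_{n-1}(\log\theta)^{1-n}$, leaving a quantity as hard to control as the original problem. The paper does not reprove the lemma; it quotes \cite[Theorem~3.2]{FJ}, whose proof is the Mattila--Rickman argument \cite{Mattila79}: one compares the averages of $y\mapsto n(r,y,f)$ with respect to two measures of finite energy, namely the normalized chordal measure (giving $A(r,f)$) and an equilibrium-type measure on a compact subset of $F$ of positive capacity (giving $0$, since $f$ omits $F$), the difference being bounded linearly in $K_I(f)$ with constants depending only on $n$, $\theta$ and the measure on $F$. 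That potential-theoretic comparison of averages, not the condenser inequality, is the missing ingredient in your proposal.
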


The condenser
$$
E_{G}(t)=\left(B^n(1),[0,t e_1]\right)
$$
is called the \emph{Gr\"otzsch condenser}. It has the following important
extremal property~\cite[Lemma~III.1.9]{Rickman93}.
\begin{lemma}\label{lemma2k}
Let $(A,C)$ be a condenser with $A\subset B^n(r)$.
Suppose that $C$ is connected and that $0,x\in C$ where $x\in B^n(r)\backslash\{0\}$.
Then
$$
\capacity (A,C)\geq \capacity E_G(|x|/r).
$$
\end{lemma}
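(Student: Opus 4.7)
The plan is to prove this extremal property of the Gr\"otzsch condenser by spherical symmetrization, which is the standard route by which the Gr\"otzsch configuration is identified as the extremum among condensers $(A,C)$ whose core is a continuum joining the origin to a prescribed point.

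First I would reduce to the case $r=1$. The $n$-capacity is invariant under dilations: the substitution $y=x/r$ in $\int_A|\nabla u|^n\,dm$ produces $\int_{A/r}|\nabla \tilde u|^n\,dm$ with $\tilde u(y)=u(ry)$, because $|\nabla\tilde u|$ gains a factor $r$ while $dm$ loses a factor $r^n$ in dimension $n$. After rescaling we may therefore assume $A\subset B^n(1)$, $0,x\in C$, and $t:=|x|\in(0,1)$.

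Next, for any non-negative admissible $u\in C^\infty_0(A)$ (so $u\geq 1$ on $C$), I would pass to its spherical rearrangement $u^*$ about the ray $\{\rho e_1:\rho\geq 0\}$. This is the unique function such that, for each $\rho>0$ and each $s\geq 0$, the super-level set $\{y\in S^{n-1}(\rho):u^*(y)\geq s\}$ is a closed spherical cap centred at $\rho e_1$ having the same $(n-1)$-dimensional Hausdorff measure as $\{y\in S^{n-1}(\rho):u(y)\geq s\}$. Rearranging $A$ and $C$ by the same recipe yields a condenser $(A^*,C^*)$ for which $u^*$ is admissible; in particular $A\subset B^n(1)$ forces $A^*\subset B^n(1)$. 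The essential analytic input, which I expect to be the main obstacle, is the Polya--Szeg\H{o} type inequality
\[
\int|\nabla u^*|^n\,dm\leq\int|\nabla u|^n\,dm
\]
for spherical symmetrization. Its proof uses the coarea formula to split the gradient into its radial and spherical components, applies a one-dimensional rearrangement inequality on each sphere $S^{n-1}(\rho)$, and exploits the fact that symmetrization does not increase the perimeter of super-level sets on each sphere. I would invoke this as a classical result (going back to Sperner and Gehring in this setting) rather than reproving it. Passing to the infimum over admissible $u$ then gives $\capacity(A^*,C^*)\leq\capacity(A,C)$.

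Finally, the hypothesis that $C$ is a compact connected set containing $0$ and $x$ implies, by a standard separation argument, that $C\cap S^{n-1}(\rho)\neq\emptyset$ for every $\rho\in[0,t]$; if some $S^{n-1}(\rho)$ missed $C$, then $C\cap B^n(\rho)$ and $C\setminus\overline{B^n(\rho)}$ would disconnect $C$. The definition of spherical symmetrization then forces $\rho e_1\in C^*$ for each such $\rho$, so $C^*\supset[0,te_1]$. Combined with $A^*\subset B^n(1)$, the monotonicity property \eqref{lemma2j} yields
\[
\capacity(A^*,C^*)\geq\capacity(B^n(1),[0,te_1])=\capacity E_G(t),
\]
and chaining this with $\capacity(A,C)\geq\capacity(A^*,C^*)$ produces the desired inequality.
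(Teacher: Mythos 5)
This lemma is not proved in the paper at all: it is quoted verbatim from Rickman's monograph \cite[Lemma~III.1.9]{Rickman93}, so there is no internal argument to compare yours against. Your spherical-symmetrization proof is the standard route to that cited result, and the outline is sound: dilation invariance of the $n$-capacity reduces matters to $r=1$; the separation argument correctly shows that a compact connected set containing $0$ and $x$ meets every sphere $S^{n-1}(\rho)$ for $0\le\rho\le|x|$, so the symmetrized core contains the segment $[0,|x|e_1]$; and the monotonicity property \eqref{lemma2j} then gives $\capacity(A^*,C^*)\ge\capacity E_G(|x|/r)$. Be aware, though, that essentially all of the analytic work is concentrated in the step you invoke as a black box, namely the cap-symmetrization (P\'olya--Szeg\H{o} type, due to Sperner, Gehring and Sarvas) inequality for the $n$-Dirichlet integral, together with the routine but not entirely trivial verifications that $(A^*,C^*)$ is again a condenser with $C^*\subset A^*\subset B^n(r)$ and that $u^*$ is admissible; since smoothness is destroyed by rearrangement, one should work with the equivalent $W^1_{n}$ formulation of admissibility recalled in section~\ref{quasiregular}. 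That symmetrization theorem is of roughly the same depth as the lemma itself, so your argument is best read as a correct reduction of the lemma to the classical symmetrization result -- which is indeed how the statement is established in the source the paper cites.
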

We shall also need the following bound for the capacity of the Gr\"otzsch
condenser~\cite[Lemma~III.1.2]{Rickman93}.
\begin{lemma}\label{lemma2l}
There exists a constant $\lambda_n$ depending only on $n$ such that
$$
\capacity E_G(t)\geq \omega_{n-1}\left(\log\frac{\lambda_n}{t}\right)^{1-n}
$$
for $0<t<1$.
\end{lemma}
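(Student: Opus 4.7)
This is the classical Gr\"otzsch-ring estimate, going back to Gehring. The plan is to reduce the problem to the explicit capacity of a spherical ring via the $n$-modulus formulation.

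First, by the identification of $n$-capacity with the $n$-modulus of the corresponding curve family, we have
\[
\capacity E_G(t)=M(\Gamma_t),
\]
where $\Gamma_t$ is the family of locally rectifiable paths in $B^n(1)\setminus [0,te_1]$ joining $[0,te_1]$ to $S^{n-1}(1)$. The target becomes the lower bound $M(\Gamma_t)\geq \omega_{n-1}(\log(\lambda_n/t))^{1-n}$.

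Next, I would compare with a spherical ring condenser. The $n$-modulus of the curve family of $(B^n(1),\overline{B^n(s)})$ equals $\omega_{n-1}(\log(1/s))^{1-n}$, as is verified directly from the radial extremal metric $\rho(x)=1/(|x|\log(1/s))$ on the annulus together with the co-area formula. To produce a lower bound on $M(\Gamma_t)$, fix a small half-angle $\theta>0$ and consider the subfamily $\Gamma^*_\theta\subset\Gamma_t$ of paths that remain inside the half-cone $\{x\in\R^n:x_1>(1-\theta)|x|\}$; since admissibility for $\Gamma_t$ implies admissibility for $\Gamma^*_\theta$, we have $M(\Gamma^*_\theta)\leq M(\Gamma_t)$. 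Inside the cone the paths are essentially radial, and a length-area estimate with the radial test metric $\rho(x)=1/(|x|\log(1/t))$ yields $M(\Gamma^*_\theta)\geq c(n,\theta)(\log(1/t))^{1-n}$. Absorbing the angular correction $c(n,\theta)$, after optimising over $\theta$, into a dimensional constant then produces the asserted lower bound with constant $\lambda_n$.

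The main obstacle is the execution of the length-area argument on the cone, which requires a Loewner-type modulus estimate on the sphere $S^{n-1}$ to control the angular oscillation of admissible paths. In dimension two the computation is explicit and yields the sharp value $\lambda_2=4$ through the classical relation between the Gr\"otzsch ring modulus and the elliptic modulus. In higher dimensions the constant $\lambda_n$ is finite but not explicit; its mere existence is precisely what the statement asserts, and this is sufficient for the later applications of the lemma.
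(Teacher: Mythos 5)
Your reductions at the start are fine: for this condenser $\capacity E_G(t)$ does equal the modulus $M(\Gamma_t)$ of the family of curves joining $[0,te_1]$ to the unit sphere, and since $\Gamma^*_\theta\subset\Gamma_t$ one indeed has $M(\Gamma^*_\theta)\leq M(\Gamma_t)$, so a lower bound for the subfamily would suffice. The gap is that the step carrying all the weight, $M(\Gamma^*_\theta)\geq c(n,\theta)(\log(1/t))^{1-n}$, is only asserted, and the mechanism you propose cannot deliver it. A ``test metric'' such as $\rho(x)=1/(|x|\log(1/t))$ can only bound a modulus from \emph{above}, since the modulus is an infimum over admissible metrics; a lower bound requires estimating $\int\rho^n$ for \emph{every} admissible $\rho$, i.e.\ a length--area (Fuglede/H\"older) argument over a concrete subfamily of curves of $\Gamma^*_\theta$. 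But the curves of $\Gamma^*_\theta$ are not ``essentially radial'': apart from the axis itself, no radial ray of the cone meets the segment $[0,te_1]$, so the radial curves of the truncated cone do not belong to $\Gamma^*_\theta$ at all. What the radial argument genuinely estimates is the family of curves joining $S^{n-1}(0,t)$ to $S^{n-1}(0,1)$ inside the cone; every curve of $\Gamma^*_\theta$ merely \emph{contains} a subcurve of that family, and by the standard overflowing comparison this gives $M(\Gamma^*_\theta)\leq M(\text{ring family in the cone})$ -- an inequality in the wrong direction. The entire difficulty of the lemma is that the inner plate is a one-dimensional continuum rather than the ball $\overline{B}(t)$; intersecting with a cone leaves that difficulty untouched, and the ``Loewner-type modulus estimate on $S^{n-1}$'' you defer to is precisely the missing content, essentially equivalent to the lemma itself.

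There is also a structural obstruction which shows the route cannot yield the inequality as stated even if the cone bound were proved. By the overflowing comparison above, $M(\Gamma^*_\theta)\leq |\Omega_\theta|\,(\log(1/t))^{1-n}$, where $|\Omega_\theta|$ is the surface measure of the spherical cap cut out by your half-cone, which is strictly less than $\omega_{n-1}$ (at most half of it). The lemma requires the full constant $\omega_{n-1}$, with the loss appearing only as an additive term $\log\lambda_n$ inside the logarithm; a multiplicative loss in front, $c\,(\log(1/t))^{1-n}$ with $c<\omega_{n-1}$, is strictly weaker than $\omega_{n-1}(\log(\lambda_n/t))^{1-n}$ for small $t$ no matter how $\lambda_n$ is chosen, so ``absorbing $c(n,\theta)$ into $\lambda_n$ after optimising over $\theta$'' is not possible. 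For comparison, the paper does not prove this lemma at all: it is quoted from Rickman [Lemma III.1.2], where the bound comes from the theory of the Gr\"otzsch ring -- one shows that the modulus of the Gr\"otzsch ring exceeds $\log(1/t)$ by at most a bounded additive constant $\log\lambda_n$, using symmetrization and the Gr\"otzsch--Teichm\"uller comparison -- which is genuinely more delicate than a radial length--area estimate; your explicit two-dimensional remark is correct but does not address the higher-dimensional case that the lemma is used for.
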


We denote the (Euclidean) diameter of a subset $A$ of $\R^n$ by $\diam A$.
For $\eta>0$,
an increasing, continuous function $h\colon (0,\eta)\to (0,\infty)$
satisfying $\lim_{t\to 0}h(t)=0$ is called a \emph{gauge function}.
For $A\subset\R^n$ and $\delta>0$, we call a sequence $(A_j)$ of subsets of $\R^n$ a
$\delta$-\emph{cover} of $A$ if $\diam A_j <\delta$ for all $j\in\N$ and
$$
A\subset\bigcup_{j=1}^{\infty}A_j.
$$
We put
$$
H_h^{\delta}(A)=
\inf\left\{\sum_{j=1}^{\infty}h(\diam A_j):
(A_j)\text{ is }\delta\text{-cover of }A\right\}
$$
and call
$$
H_h(A)=\lim_{\delta\to 0}H_h^{\delta}(A)
$$
the \emph{Hausdorff
measure} of $A$ with respect to the gauge function~$h$.
Note that since $H_h^\delta(A)$ is a non-increasing function of $\delta$,
the limit defining $H_h(A)$ exists.
The possibility that $H_h^\delta(A)=\infty$ is allowed,
meaning that $\sum_{j=1}^\infty h(\diam A_j)$ diverges for all $\delta$-covers $(A_j)$.

In the special case that $h(t)=t^s$ for some $s>0$, we call $H_h(A)$ the
$s$-\emph{dimensional Hausdorff measure}. There exists $d\in [0,n]$ such that
$H_{t^s}(A)=\infty$ for $0<s<d$ and $H_{t^s}(A)=0$ for $s>d$. This value $d$ is
called the \emph{Hausdorff dimension} of $A$ and is denoted by $\dim A$.

Recall that a function $f\colon X\to\R^n$ where $X\subset \R^n$ is said to
be \emph{H\"older continuous} with exponent $\alpha$
if there exists $L>0$ such that
$$
|f(x)-f(y)|\leq L|x-y|^\alpha\quad \text{for all}\  x,y\in X.
$$
In the special case that $\alpha=1$ we say that $f$ is
\emph{Lipschitz continuous} with \emph{Lipschitz constant}~$L$.
The following result was proved in~\cite[Corollaries 9.1 and 9.2]{FJ}.
\begin{lemma}\label{lemma2d}
Let $X\subset \R^n$ be compact, $f\colon X\to \R^n$ and $\delta>0$.
Suppose that each $y\in X$ has $m$ pre-images $x_1,\dots,x_m$ satisfying
$|x_i-x_j|\geq\delta$ for $i\neq j$. Let $x\in X$.
\begin{itemize}
\item[(i)]
If $f$ satisfies a Lipschitz condition with Lipschitz constant~$L>1$, then
$$
\dim \overline{O^-_f(x)}\geq \frac{\log m}{\log L}.
$$
\item[(ii)]
If $f$ satisfies a H\"older condition with exponent $\alpha<1$, then
$$
H_h\left(\overline{O^-_f(x)}\right)>0
$$
for
$$
h(t)
=
\left(\log\frac{1}{t}\right)^{(\log m)/(\log \alpha)}.
$$
\end{itemize}
\end{lemma}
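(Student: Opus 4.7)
The plan is to build a backward tree of pre-images of $x$ and to transport along it a measure that certifies the claim via the mass distribution principle.

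First, I recursively label points by strings over $\{1,\ldots,m\}$: set $x_\emptyset=x$, and for $\sigma=(\sigma_1,\ldots,\sigma_k)$ let $x_\sigma$ be the $\sigma_k$-th of the $m$ pre-images of $x_{(\sigma_1,\ldots,\sigma_{k-1})}$ provided by the hypothesis. If two length-$k$ strings $\sigma,\tau$ first disagree at position $i+1$, then $f^{k-i-1}(x_\sigma)=x_{(\sigma_1,\ldots,\sigma_{i+1})}$ and $f^{k-i-1}(x_\tau)=x_{(\tau_1,\ldots,\tau_{i+1})}$ are two distinct pre-images of their common length-$i$ ancestor, hence are at least $\delta$ apart. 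Iterating the Lipschitz bound $k-i-1$ times yields, in case (i),
\[
|x_\sigma-x_\tau|\geq \delta L^{-(k-i-1)},
\]
while iterating the H\"older condition gives, in case (ii),
\[
|x_\sigma-x_\tau|\geq \delta^{1/\alpha^{k-i-1}}\, L^{-(1-\alpha^{k-i-1})/((1-\alpha)\alpha^{k-i-1})}.
\]
In particular the $m^k$ points $S_k:=\{x_\sigma:|\sigma|=k\}$ are pairwise distinct.

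Next I assign mass $m^{-k}$ to each point of $S_k$, obtaining a probability measure $\mu_k$ on the compact set $X$; a subsequence converges weakly to a probability measure $\mu$ supported in $\overline{O^-_f(x)}$. For a ball $B(y,r)$, two distinct points of $B(y,r)\cap S_k$ lie at distance less than $2r$, which by the separation estimate forces their common prefix to have length at most $k-J(r)-1$, where $J(r)=\lceil\log(\delta/(2r))/\log L\rceil$ in case (i) and, after inverting the doubly-exponential estimate, $J(r)\asymp \log\log(1/r)$ in case (ii). Hence $B(y,r)\cap S_k$ contains at most one representative per prefix of length $k-J(r)$, giving the uniform (in $k$) bound
\[
\mu_k(B(y,r))\leq m^{k-J(r)}\cdot m^{-k}=m^{-J(r)}.
\]
Substituting the formula for $J(r)$ yields $\mu_k(B(y,r))\leq C r^{\log m/\log L}$ in case (i) and $\mu_k(B(y,r))\leq C(\log(1/r))^{\log m/\log\alpha}=Ch(r)$ in case (ii). Weak convergence transfers these bounds to $\mu$, and the mass distribution principle then gives positivity of the $s$-dimensional Hausdorff measure of $\overline{O^-_f(x)}$ with $s=\log m/\log L$, proving (i); and $H_h(\overline{O^-_f(x)})>0$, proving (ii).

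The main obstacle is executing the separation estimate cleanly in the H\"older case. Iterating $|f(u)-f(v)|\leq L|u-v|^\alpha$ produces the compound exponent $\alpha^{k-i-1}$, and thus a doubly-exponential collapse of the lower bound on $|x_\sigma-x_\tau|$. The gauge $h(t)=(\log(1/t))^{\log m/\log\alpha}$ is calibrated precisely so that inverting this estimate returns an upper bound on $\mu_k(B(y,r))$ that is independent of $k$---exactly what the mass distribution principle requires.
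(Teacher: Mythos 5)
Your argument is correct, and it is essentially the standard route that the paper relies on by citing \cite{FJ}: build the tree of $\delta$-separated backward-orbit points $x_\sigma$, derive the separation estimates by iterating the Lipschitz (respectively H\"older) bound, and push the uniform counting bound for balls through a weak limit of the measures $\mu_k$ via the mass distribution principle, with the gauge $h(t)=(\log(1/t))^{(\log m)/(\log\alpha)}$ calibrated exactly to absorb the doubly-exponential loss in the H\"older case. No essential gap; only routine details (e.g.\ treating the degenerate case $\delta=L^{1/(1-\alpha)}$ and passing ball estimates to the weak limit) are left implicit.
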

In~\cite{FJ} it is actually assumed that $f$ maps $X$ to $X$
and the conclusion concerns $\dim X$ and $H_h(X)$. However, the proof yields the
above result.

Part (ii) of the above result can be applied in particular to
\qr maps by the following result~\cite[Theorem III.1.11]{Rickman93}.
\begin{lemma}\label{lemma2m}
Let $f\colon \Omega\to\R^n$ be quasiregular. Then
$f$ is locally H\"older continuous with exponent
$K_I(f)^{1/(1-n)}$.
\end{lemma}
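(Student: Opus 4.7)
My plan is to derive local Hölder continuity from the capacity transformation inequality for quasi\-regular maps (Lemma~\ref{lemma2i}), combined with the extremal property of the Grötzsch condenser (Lemma~\ref{lemma2k}) and its lower capacity bound (Lemma~\ref{lemma2l}), supplemented by a matching upper bound for $\capacity E_G(t)$ obtained from an explicit radial test function. The key point is that both sides produce the same leading order $\omega_{n-1}(\log(1/t))^{1-n}$ as $t\to 0$, which is exactly what yields the sharp exponent $K_I(f)^{1/(1-n)}$.

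Fix $x_0\in\Omega$ and choose $r>0$ with $\overline{B^n(x_0,2r)}\subset\Omega$. Let $M=\sup\{|f(z)-f(x_0)|:z\in\overline{B^n(x_0,2r)}\}$, which is finite by continuity. For $x,y\in B^n(x_0,r)$ with $0<|x-y|<r$ I would apply Lemma~\ref{lemma2i} to the condenser $(A,C)=(B^n(x,r),[x,y])$, using $m\ge 1$ since each point of $f(C)$ has at least one preimage in $C$, to obtain
$$
\capacity(f(A),f(C))\le K_I(f)\,\capacity(A,C).
$$
Since $f(C)$ is a continuum through $f(x)$ and $f(y)$ and $f(A)\subset B^n(f(x),2M)$, translating by $-f(x)$ and applying Lemma~\ref{lemma2k} with radius $2M$ yields $\capacity(f(A),f(C))\ge \capacity E_G(|f(x)-f(y)|/(2M))$, which by Lemma~\ref{lemma2l} is in turn bounded below by $\omega_{n-1}(\log(2M\lambda_n/|f(x)-f(y)|))^{1-n}$.

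For the upper bound on $\capacity(A,C)$ I would substitute the radial cut-off $u(z)=\max(0,\min(1,\log(r/|z-x|)/\log(r/|x-y|)))$, suitably smoothed to belong to $C_0^\infty(A)$, into the variational definition of capacity and integrate $|\nabla u|^n$ in spherical shells about $x$; this gives $\capacity(A,C)\le \omega_{n-1}(\log(r/|x-y|))^{1-n}$. Chaining the three bounds, dividing by $\omega_{n-1}$, and raising both sides to the negative power $1/(1-n)$ (which reverses the inequality) leads to
$$
\log\frac{2M\lambda_n}{|f(x)-f(y)|}\ge K_I(f)^{1/(1-n)}\log\frac{r}{|x-y|},
$$
and on exponentiating this becomes $|f(x)-f(y)|\le 2M\lambda_n (|x-y|/r)^{K_I(f)^{1/(1-n)}}$, which is the required local Hölder estimate with the claimed exponent.

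The main obstacle is producing the upper bound on $\capacity(A,C)$ with \emph{precisely} the constant $\omega_{n-1}$ in front of $(\log(r/|x-y|))^{1-n}$. Any extra multiplicative factor there would, after the inversion $t\mapsto t^{1/(1-n)}$, combine with $K_I(f)^{1/(1-n)}$ as a multiplicative distortion inside the exponent and ruin the sharp constant in the statement. This is what forces the choice of the logarithmic test function above, which is extremal for the radial $n$-modulus; checking the remaining technicalities (smoothing so that $u\in C_0^\infty(A)$, and that $u\ge 1$ on the entire segment $[x,y]$ and not merely at its endpoints) is routine.
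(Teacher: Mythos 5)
The paper gives no proof of this lemma at all---it simply quotes it from Rickman [Theorem~III.1.11]---and your argument is essentially the standard proof of that cited result: the $K_I$-inequality of Lemma~\ref{lemma2i} applied to a small condenser about $x$, the Gr\"otzsch bounds of Lemmas~\ref{lemma2k} and~\ref{lemma2l} on the image side, and the exact ring capacity $\omega_{n-1}\left(\log(r/|x-y|)\right)^{1-n}$ on the source side, combined by raising to the negative power $1/(1-n)$. The chain of estimates is correct, and the technical worry you flag is harmless, since the paper's definition of capacity allows Lipschitz test functions with compact support, so the truncated logarithm on a ball of radius $\rho<r$ is admissible without smoothing and one lets $\rho\to r$ (the remaining edge cases, such as $f(x)=f(y)$ or pairs with $|x-y|\ge r$, are handled trivially by adjusting the constant).
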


The following result connecting capacity and Hausdorff measure
can be found in~\cite{Wallin77}.
\begin{lemma}\label{lemma2f}
Let $X\subset\R^n$ be compact and $\eps>0$.  If
$H_h(X)>0$
for
$$
h(t) =
\left(\log\frac{1}{t}\right)^{1-n-\eps},
$$
then $\capacity X>0$.
\end{lemma}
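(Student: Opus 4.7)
The plan is to prove the contrapositive: assume $\capacity X=0$ and deduce $H_h(X)=0$. The natural route goes through a dual characterisation of $n$-capacity via an equilibrium energy. Classical (nonlinear) potential theory yields that for a compact set $X\subset\R^n$,
\[
\capacity X>0 \iff \text{there is a nonzero finite Borel measure }\mu\text{ on }X\text{ with }I_n(\mu)<\infty,
\]
where the critical-case energy uses a logarithmic kernel,
\[
I_n(\mu)=\int\!\!\int k(|x-y|)\,d\mu(x)\,d\mu(y),\qquad k(r)=\left(\log\tfrac{1}{r}\right)^{n-1}
\]
(with $k(r)$ cut off or taken to be $0$ for $r\geq 1$). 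For $n=2$ this specialises to the familiar theory of logarithmic capacity in the plane; for $n>2$ it is the critical-exponent analogue of the Riesz/Frostman duality available for $p$-capacities with $p<n$.

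Granted this equivalence, I would argue as follows. Suppose, for contradiction, that $H_h(X)>0$. Frostman's lemma applied with the gauge $h$ yields a finite Borel measure $\mu$ supported on $X$ with $\mu(X)>0$ and
\[
\mu(B(x,r))\leq C\,h(r)=C\left(\log\tfrac{1}{r}\right)^{1-n-\eps}
\]
for every $x\in\R^n$ and all sufficiently small $r$. Fix $x\in X$ and substitute $t=\log(1/r)$ in the layer-cake identity to obtain
\[
\int k(|x-y|)\,d\mu(y)=(n-1)\int_0^{\infty}t^{n-2}\,\mu(B(x,e^{-t}))\,dt.
\]
Split the integral at some $T_0$ so that the Frostman bound holds on $r\leq e^{-T_0}$. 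The portion over $[0,T_0]$ is at most $\mu(X)T_0^{n-1}<\infty$, and the tail is controlled by
\[
(n-1)\int_{T_0}^{\infty}t^{n-2}\cdot C\,t^{1-n-\eps}\,dt=C(n-1)\int_{T_0}^{\infty}t^{-1-\eps}\,dt<\infty.
\]
Hence the potential $\int k(|x-y|)\,d\mu(y)$ is bounded uniformly in $x\in X$, so $I_n(\mu)\leq \mu(X)\sup_{x\in X}\int k(|x-y|)\,d\mu(y)<\infty$. By the characterisation above, this forces $\capacity X>0$, contradicting our assumption.

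The main obstacle is the energy-capacity equivalence invoked at the outset. In the subcritical regime $p<n$ it follows from Riesz-kernel estimates together with a Marcinkiewicz-type inequality, but in the critical case $p=n$ relevant here one must work with a logarithmic Bessel-type kernel and show that a measure of finite $n$-energy produces an equilibrium-type potential realising positive $n$-capacity. Once that duality is in hand, the Frostman measure and the layer-cake estimate above are routine, and the purpose of the loss $\eps>0$ in the exponent of $h$ is precisely to furnish the decaying factor $t^{-\eps}$ that makes the tail integral convergent; at the borderline $\eps=0$ the integral diverges logarithmically, which is why the Wallin result requires a strictly sub-$(1-n)$ exponent.
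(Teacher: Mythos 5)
There is a genuine gap, and it is exactly where you flag it yourself: the ``energy--capacity equivalence'' at the outset is not a known fact you can invoke, and the rest of your argument stands or falls with it. For $n=2$ the equivalence of positive $2$-capacity with the existence of a measure of finite logarithmic energy is classical, but for $n\ge 3$ the conformal ($n$-)capacity is a genuinely nonlinear capacity: the correct characterisation (Hedberg--Wolff) is that $\capacity X>0$ if and only if $X$ carries a nonzero measure whose \emph{Wolff} energy $\int\!\int_0^1 \mu(B(x,r))^{1/(n-1)}\,\frac{dr}{r}\,d\mu(x)$ is finite, and this nonlinear energy is not comparable to the linear energy with kernel $\left(\log\frac{1}{r}\right)^{n-1}$. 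Indeed, testing on Cantor-type sets (where the uniform measure is extremal) one can arrange $\mu(B(x,r))\approx 2^{-nj}$ on scales $l_{j+1}\le r\le l_j$ with $\log(1/l_j)$ growing like $2^{nj/(n-1)}/j$, which makes your linear energy converge while the Wolff energy diverges; so the direction of the equivalence you actually need (finite linear $(\log)^{n-1}$-energy implies positive capacity) is not merely unproven but appears to be false in dimensions $n\ge 3$. Your Frostman step and the layer-cake computation are fine, but they feed into a duality that is not available, so the proof as written does not establish the lemma. For comparison, the paper does not prove this lemma at all: it is quoted from Wallin's metrical characterisation of conformal capacity zero \cite{Wallin77}.

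The good news is that your strategy is easily repaired, because the $\eps$-loss in the gauge is strong enough for the \emph{nonlinear} theory as well: with the Frostman measure satisfying $\mu(B(x,r))\le C\left(\log\frac1r\right)^{1-n-\eps}$ one gets
\[
\int_0^{1/2}\mu(B(x,r))^{1/(n-1)}\,\frac{dr}{r}\;\le\;C'\int_0^{1/2}\left(\log\frac1r\right)^{-1-\eps/(n-1)}\frac{dr}{r}<\infty
\]
uniformly in $x$, so the Wolff energy of $\mu$ is finite; by the Hedberg--Wolff theorem the Bessel capacity $C_{1,n}(X)$ is positive, and since $C_{1,n}$ and the variational $n$-capacity have the same null compact sets, $\capacity X>0$. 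If you want a self-contained argument, that is the route to take (and it makes transparent why $\eps=0$ fails, just as in your tail estimate); otherwise the honest course is simply to cite Wallin, as the paper does.
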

An immediate consequence is the following result.
\begin{lemma}\label{lemma2h}
Let $X\subset\R^n$ be compact.
If $\dim X>0$, then $\capacity X>0$.
\end{lemma}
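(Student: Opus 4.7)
The plan is to deduce this directly from Lemma~\ref{lemma2f}, the more refined capacity--Hausdorff comparison already in hand. The strategy has only two steps: translate the hypothesis $\dim X>0$ into a lower bound for some power-function Hausdorff measure, and then compare that measure to the logarithmic gauge appearing in Lemma~\ref{lemma2f}.

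For the first step, since $\dim X>0$, the definition of Hausdorff dimension gives some $s$ with $0<s<\dim X$, and for such $s$ we have $H_{t^s}(X)>0$ (in fact $H_{t^s}(X)=\infty$, but positivity is all we need).

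For the second step, fix $\eps>0$ and set $h(t)=(\log(1/t))^{1-n-\eps}$. Since $n\geq 2$, the exponent $1-n-\eps$ is negative, so $h(t)\to 0$ as $t\to 0$, but only logarithmically slowly. A polynomial $t^s$ decays much faster, so there is some $\eta>0$ such that
\[
t^s \leq h(t)\quad \text{for all } 0<t<\eta.
\]
Consequently, for every $\delta<\eta$ and every $\delta$-cover $(A_j)$ of~$X$ we have $\sum_j(\diam A_j)^s\leq\sum_j h(\diam A_j)$, whence $H_{t^s}^\delta(X)\leq H_h^\delta(X)$. Letting $\delta\to 0$ yields $H_h(X)\geq H_{t^s}(X)>0$, and Lemma~\ref{lemma2f} then gives $\capacity X>0$.

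There is no real obstacle here; the only thing to check carefully is the elementary comparison of $t^s$ and $(\log(1/t))^{1-n-\eps}$ near $t=0$, which is immediate since any positive power of $t$ beats any negative power of $\log(1/t)$.
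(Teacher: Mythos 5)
Your proof is correct and follows exactly the route the paper intends: Lemma~\ref{lemma2h} is stated there as an immediate consequence of Lemma~\ref{lemma2f}, and your argument simply fills in the standard comparison that $t^s\le\left(\log(1/t)\right)^{1-n-\eps}$ near $t=0$, so that $H_{t^s}(X)>0$ forces $H_h(X)>0$ for the logarithmic gauge. Nothing further is needed.
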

In particular, it follows that $\capacity X>0$ if $X$ contains
a non-degenerate continuum.

\section{Functions without pits effect}\label{F w/o p e}
 In this section, we prove Theorem~\ref{T1a} and Theorem~\ref{T1d} for functions which do not have the pits effect.
We also prove Theorem~\ref{T1i} which deals only with such functions.

 Throughout this section, let $f\colon \R^n\to \R^n$ be an entire \qr map of transcendental type which does not have the pits effect. We fix a large positive integer $N$ and obtain $c>1$, $\varepsilon >0$ and a sequence $(R_m)$ tending to $\infty$ such that $\{x\in \R^n:\ R_m\le|x|\le cR_m,\ |f(x)|\le 1\}$ cannot be covered by $N$ balls of radius $\varepsilon R_m$. We consider the functions $h_m\colon \R^n\to \R^n$,
 \begin{equation}\label{3a} h_m(x)=f(R_mx).
 \end{equation}
 With $P=\{x\in \R^n: 1\le|x|\le c\}$ we find that
 \[\{x\in P:|h_m(x)|\le 1\}\]
 cannot be covered by $N$ balls of radius $\varepsilon$. Thus there exist $x_1^m,\dots,x_N^m\in P$ satisfying $|x_i^m-x_j^m|\ge \varepsilon$ for $i\neq j$ such that $|h_m(x_j^m)|\le 1$ for $j\in\{1,\dots, N\}$. Passing to a subsequence if necessary, we may assume that the sequences $(x_j^m)_{m\in \N}$ converge for $j\in\{1,\dots,N\}$, say $x_j^m\to x_j$ as $m\to \infty$. Then $|x_i-x_j|\ge \varepsilon$ for $i\neq j$. We fix $r_1,\dots, r_N$ satisfying $1\le r_1<r_2<\dots <r_N\le c$ and choose $y^m_j$ such that $|y_j^m|=r_j$ and
 \[|h_m(y_j^m)|=M(r_j,h_m)=M(R_mr_j,f).\]
Again we may assume that the sequences $(y_j^m)_{m\in \N}$ converge, say $y_j^m\to y_j$ as $m\to \infty$. We may choose pairwise disjoint curves $\gamma_j$ which connect $x_j$ with $y_j$
and small neighbourhoods $U_j$ of the curves $\gamma_j$
such that their closures $\overline{U}_j$ are pairwise disjoint.
Then $x_j\in U_j$ and $y_j\in U_j$ for $j\in\{1,\dots,N\}$, and there exists $\delta>0$ with
\[\dist(U_i,U_j)=\inf\limits_{v\in U_i,w\in U_j }|v-w|\ge \delta\]
for $i\neq j$. Since $|h_m(x_j^m)|\le 1$ and $x_j^m\to x_j$ while $|h_m(y_j^m)|\to \infty$ and $y_j^m\to y_j$, we find that the sequence $(h_m)$ is not normal in any of the domains $U_j$. In fact, no subsequence of $(h_m)$ is normal.

We shall also consider the functions $g_m\colon \R^n\to \R^n$,
 \begin{equation}\label{3b} g_m(x)=\frac{f(R_mx)}{R_m}=\frac{h_m(x)}{R_m}.
 \end{equation}
Using Lemma~\ref{lemma2a} we find that $|g_m(y_j^m)|\to \infty$ as $m\to \infty$.
Since $|g_m(x_j^m)|\le 1/R_m$ this implies that no subsequence of $(g_m)$ is normal in any of the domains $U_j$.
\begin{proof}[Proof of Theorem \ref{T1a} for functions without pits effect]
With $g_m$ and $U_j$ as defined above we deduce from Lemma~\ref{lemma2b} that if $m\in\N$ is large enough, say $m\ge M$, and if $j\in\{1,\dots,N\}$, then $g_m(U_j)\supset U_i$ for at least $N-q$ values of $i\in \{1,\dots,N\}$.
This implies that if $k\in \N$, $j\in\{1,\dots,N\}$ and $m\ge M$, then,
counting multiplicities, $g_m^k(U_j)$ covers at least $(N-q)^k$ of the domains $U_i$.
This means that with $L=(N-q)^k$ there exist pairwise disjoint subsets $V_1,\dots,V_L$ of $U_j$ such that if $\ell\in \{1,\dots,L\}$, then $g_m^k(V_\ell)=U_i$ for some $i\in \{1,\dots,N\}$. Hence, for each $j\in \{1,\dots,N\}$, there exists $i\in\{1,\dots,N\}$ such that
\[n(U_j,y,g_m^k)\ge\frac{(N-q)^k}{N}\quad \text{for all}\ y \in U_i.\]
 This implies that
 \begin{equation}\label{3c} A(U_j,g_m^k)\ge C_1\frac{(N-q)^k}{N}
 \end{equation}
for some $C_1>0$ and all $j\in\{1,\dots,N\}$.

Suppose now that $J(g_m)\cap \overline{U}_j=\emptyset$ for some $j\in \{1,\dots,N\}$. Then for each $x\in \overline{U}_j$ there exists $r_x>0$ such that
$B(x,2r_x)\cap J(g_m)=\emptyset$. Hence
\[A(B(x,r_x),g_m^k)\le c_xK_I(g_m^k)\le c_xK_I(g_m)^k\]
for some $c_x>0$ by Lemma~\ref{lemma2c}, \eqref{2c} and the definition of $J(g_m)$. Since $\overline{U}_j$ can be covered by finitely many balls $B(x,r_x)$, we obtain
\begin{equation}\label{3d} A(U_j,g_m^k)\le C_2K_I(g_m)^k=C_2K_I(f)^k.
 \end{equation}
Choosing $N>K_I(f)+q$ we obtain a contradiction from~\eqref{3c} and~\eqref{3d}.
Thus $J(g_m)\cap\overline{U}_j\neq \emptyset$ for $j\in \{1,\dots,N\}$.
Since $f$ is conjugate to $g_m$ by the linear map $x\mapsto R_m x$, we deduce that $\card J(f)\ge N$. We obtain $\card J(f)=\infty$.
\end{proof}
\begin{proof}[Proof of Theorem \ref{T1i}]
Let $g_m$ and $U_j$ be as before. Let $p$ be the cardinality of the set of all $i\in \{1,\dots,N\}$ such that $g_m(U_j)\supset U_i$ for at most $N/2$ values of $j\in \{1,\dots,N\}$. Since each $U_j$ has $N-q$ subsets mapped onto distinct domains $U_i$, we find that
\[p\frac{N}{2}+(N-p)N\ge N(N-q),\]which is equivalent to $p\le 2q$. We choose $N$ divisible by 4 such that $N\ge 8q$. Then $N/4\ge p$.
Hence $3N/4$ of the domains $U_i$ are contained in $N/2$ of the domains $g_m(U_j)$. With $L=3N/4$ we may assume that $U_1,\dots,U_L$ are contained in $N/2$ of the domains $g_m(U_j)$;
that is, for each $i\in \{1,\dots,L\}$,
\[\card\left\{j\in\{1,\dots,N\}:g_m(U_j)\supset U_i\right\}\ge \frac{N}{2}.\]
Hence
$\card\{j\in\{1,\dots,L\}:g_m(U_j)\supset U_i\}\ge N/4$, which implies that
\[\card\left\{j\in\{1,\dots,L\}:g_m\!\left(\overline{U}_j\right)\supset
\overline{U}_i\right\}\ge \frac{N}{4},\]
for each $i\in \{1,\dots,L\}$.

It now follows from Lemma~\ref{lemma2d}, (ii), applied to $X=\bigcup_{j=1}^L\overline{U}_j$,
and  Lemma~\ref{lemma2m} that if $y\in X$, then
\[H_h\left(\overline{O^-_{g_m}(y)}\right)>0,\]
where
\[h(t)=\left(\log\frac{1}{t}\right)^{-\beta}
\quad\text{with}\
\beta=(n-1)\frac{\log(N/4)}{\log K_I(f)}.\]
Choosing $N>4K_I(f)=4K_I(g_m)$ and using Lemma~\ref{lemma2f} we obtain
\begin{equation}
\label{3d1}\capacity\overline{O^-_{g_m}(y)}>0
\quad\text{for}\
y\in X.
\end{equation}
Putting
\[V_j^m =R_m U_j =\{R_m x:x\in U_j\}\]
we obtain
\begin{equation}
\label{3e}\capacity\overline{O^-_{f}(v)}>0
\quad\text{for}\
v\in \bigcup_{j=1}^L\overline{V}_j^m,
\end{equation}
provided $m\geq M$.

Let now $x\in \R^n\backslash E(f)$.
Then $\card O^-_f(x)=\infty$ by the definition of $E(f)$.
Recall that no subsequence of the sequence $(h_m)$ defined
by~\eqref{3a} is normal in any of the domains $U_j$.
Lemma~\ref{lemma2b} implies that if $j\in\{1,\dots,N\}$
and if $m$ is sufficiently large, then there exists
$u\in U_j$ such that $h_m(u)\in O^-_f(x)$.
Putting $v=R_m u$ we obtain $v\in V_j^m$ and $f(v)\in O^-_f(x)$.
This implies that $v\in O^-_f(x)$ and hence that $O^-_f(v)\subset O^-_f(x)$.
Now~\eqref{3e} yields $\capacity \overline{O^-_f(x)}>0$.
\end{proof}
\begin{proof}[Proof of Theorem \ref{T1d} for functions without pits effect]
Using the terminology of the previous proof, and putting
\[U=\bigcup_{j=1}^L U_j
\quad\text{and}\quad
V=\bigcup_{j=1}^L V_j^m,\]
as well as $G=g_m|_U$ and $F=f|_V$, Lemma~\ref{lemma2d} and Lemma~\ref{lemma2f}
actually show that~\eqref{3d1} and~\eqref{3e} can be replaced by
\begin{equation}
\label{3f}\capacity\overline{O^-_G(y)}>0
\quad\text{for}\
y\in U
\end{equation}
and
\begin{equation}
\label{3g}\capacity\overline{O^-_F(v)}>0
\quad\text{for}\
v\in V.
\end{equation}
By a result of Siebert~\cite{Siebert06}, $f$ has infinitely many periodic
points of period $p$ for all $p\geq 2$. In particular, $f$ has a
periodic point $\xi$ of period $2$ with $\xi\notin E(f)$.
As before, there exists $v\in O^-_f(\xi)\cap V$, provided $m$ is
large enough. Let $l$ be such that $f^l(v)=\xi$.
Then
\[W=O^-_F(v)\cup \left\{f^k(v):0\leq k\leq l+1\right\}\]
satisfies $f(W)\subset W$ and hence $f(\overline{W})\subset \overline{W}$.
Thus $\overline{W}\subset BO(f)$. Now the conclusion follows from~\eqref{3g}.
\end{proof}

\section{Functions with pits effect}\label{F w p e}
In this section we prove Theorem~\ref{T1a} and Theorem~\ref{T1d} for entire \qr maps of transcendental type which have the pits effect. Let $f\colon \R^n\to\R^n$ be such a map. Then there exists a sequence $(x_m)$ tending to $\infty$ such that $|f(x_m)|\le 1$ while $|f(x)|>1$ for $|x_m|<|x|<3|x_m|$. We put $R_m=|x_m|$ and define $g_m$ by~\eqref{3b}; that is, $g_m(x)=f(R_mx)/R_m$.

Using Harnack's inequality one can show that $(g_m)$ is quasinormal; see
Remark~\ref{R8a}.
However, the quasinormality of $(g_m)$ is not essential and thus we briefly indicate how the argument can be completed if we assume that $(g_m)$ is not quasinormal. Given $N\in \N$, we may then assume, passing to a subsequence if necessary, that there exist distinct points $z_1,\dots z_N\in \R^n$ such that no subsequence of $(g_m)$ is normal at any of these points. Choose neighbourhoods $U_1,\dots,U_N$ of these points with pairwise disjoint closures. As in
section~\ref{F w/o p e}
we now see that if $N>K_I(f)+q$, then $J(g_m)\cap \overline{U}_j\neq \emptyset$
for all $j\in \{1,\dots,N\}$, provided $m$ is large enough.
Hence $J(f)\neq \emptyset$ and in fact $\card J(f)=\infty$,
which proves Theorem~\ref{T1a} in this case.

Moreover, putting again $L=3N/4$ and proceeding as in the proof of Theorem~\ref{T1i} we see that~\eqref{3d1} holds if $N>4K_I(f)$. The arguments used in~\cite{Siebert06} and~\cite{Bergweiler06} show that $\bigcup^L_{j=1}U_j$ contains a periodic point
$\xi$ of $g_m$. As in the proof of Theorem~\ref{T1d} for functions without pits
effect we deduce from $\eqref{3f}$ that $\capacity BO(g_m)>0$. Hence $\capacity BO(f)>0$, which proves Theorem~\ref{T1d} in this case.

We will thus assume from now on that $(g_m)$ is quasinormal. Passing to a subsequence if necessary, we may assume that $(g_m)$ converges in $\R^n\backslash F$ where $F$ is a finite set. We may assume that no subsequence of $(g_m)$ is normal at any point of $F$ since this can be achieved by deleting points from $F$ and passing to a subsequence of $(g_m)$.
Using Lemma~\ref{lemma2a} it is easy to see that no subsequence of $(g_m)$ is normal at~$0$.
Thus Lemma~\ref{lemma2g} implies that $g_m\to \infty$ locally uniformly in $\R^n\backslash F$. Since $|f(x_m)|\le 1$ we conclude that $F$ contains a point of norm~$1$. Clearly we also have $0\in F$. Thus $N=\card F\ge 2$. Let $F=\{y_1,\dots,y_N\}$ and choose
$$M>\max\limits_{k=1,\dots,N}|y_k|$$
 and $r>0$ such that the closed balls of radius $r$ around the points $y_k$ are pairwise disjoint and contained in $B(0,M)$.

For large $m$ we have
\[\inf_{x\in B(y_k,r)}|g_m(x)|<M\quad\text{for}\  1\le k\le N\]
while
\[|g_m(x)|>M\quad\text{for}\  x\in \overline{B}(0,M)\backslash \bigcup^N_{k=1}B(y_k,r).\]
Denote by $\overline{U}_{m,1},\dots,\overline{U}_{m,\ell_m}$ the components of
$g_m^{-1}(\overline{B}(0,M))\cap B(0,M)$. Then each $B(y_k,r)$ contains at least
one $\overline{U}_{m,j}$ so that $\ell_m\ge N\ge 2$. Denote by $U_{m,j}$ the interior of $\overline{U}_{m,j}$.
(We do not assume here that $U_{m,j}$ is connected.) Then $g_m\colon U_{m,j}\to B(0,M)$ is a proper map. Since
\[
\left|g_m\left(\frac{x_m}{R_m}\right)\right|=\frac{|f(x_m)|}{R_m}\le \frac{1}{R_m}<M,
\]
 there exists $j$ such that $x_m/R_m\in U_{m,j}$.
As $g_m\colon U_{m,j}\to B(0,M)$ is proper, $U_{m,j}$ contains a zero $\zeta_m$ of $g_m$. Putting $z_m=R_m\zeta_m$ we thus have $f(z_m)=0$ and $|z_m-x_m|<2rR_m=2r|x_m|$. Since $r$ can be chosen arbitrarily small, we deduce that $f$ has infinitely many zeros.

Denote by $d_{m,j}$ the degree of the proper map $g_m\colon U_{m,j}\to B(0,M)$ and put
\[d_m=\sum^{\ell_m}_{j=1}\; d_{m,j}.\]
Then $d_{m,j}$ equals the number of zeros of $g_m$ in $U_{m,j}$ and thus
\[d_m=n(M,0,g_m)=n(M R_m,0,f)\to \infty\]
as $m\to \infty$. In particular we have $d_m>K_I(f)^2$ for large~$m$. This condition will be needed in the proof of Theorem~\ref{T1d}. For Theorem~\ref{T1a} the weaker condition $d_m>K_I(f)$ would suffice.

From now on we drop the index $m$ and put
\[
U=\bigcup^{\ell_m}_{j=1} U_{m,j}, \
G=g_m, \
V_j=U_{m,j}, \
L=\ell_m, \
D_j=d_{m,j} \ \text{and} \
D=d_m.
\]

Then  $L\geq 2$ and
$V_1,\dots,V_L$ are open subsets of $B(0,M)$ with disjoint closures such that $G\colon V_j\to B(0,M)$ is a proper map of degree $D_j$ for $1\le j \le L$ and
\begin{equation}\label{4a} D=\sum^L_{j=1}D_j>K_I(G)^2.
\end{equation}
Note here that $K_I(G)=K_I(f)$.

For $i,j\in \{1,\dots,L\}$ let $\overline{W}_1^{i,j},\dots,\overline{W}_t^{i,j}$ be the components of $G^{-1}(\overline{V}_j)\cap V_i$, with $t=t_{i,j}\ge 1$.
Denote by $W_s^{i,j}$ the interior of $\overline{W}_s^{i,j}$. Then $G\colon W_s^{i,j}\to V_j$ is a proper map and
\begin{equation}\label{4a0}
\sum^L_{i=1}\sum^{t_{i,j}}_{s=1} \deg\!\left(G\colon W_s^{i,j}\to V_j\right)=D.
\end{equation}
Now $G^2\colon W_s^{i,j}\to B(0,M)$ is also a proper map and
\begin{equation*}
\sum^L_{i=1}\sum^{t_{i,j}}_{s=1} \deg\!\left(G^2\colon W_s^{i,j}\to B(0,M)\right)=D_jD.
\end{equation*}
Putting
\[L_2=\sum^L_{i=1}\sum^L_{j=1}t_{i,j}\]
and writing
\[\left\{W_s^{i,j}: 1\le i,j\le L,\ 1\le s\le t_{i,j}\right\}=\left\{V^2_\ell:1\le \ell\le L_2\right\}\]
we obtain $L_2$ open subsets $V_\ell^2$ of $B(0,M)$ with pairwise disjoint closures such
that $G^2\colon V_\ell^2\to B(0,M)$ is a proper map with
\[
\sum_{\ell=1}^{L_2} \deg \!\left(G^2\colon V_\ell^2\to B(0,M)\right)=D^2.
\]
Note that $L_2\ge L^2$.

Inductively we find that for $k\in \N$ there exist $L_k\geq L^k$
and open subsets $V_1^k,\dots,V_{L_k}^k$ of $B(0,M)$ with pairwise
disjoint closures $\overline{V}_1^k,\dots,\overline{V}_{L_k}^k$ such that
$G^k\colon V_\ell^k\to B(0,M)$ is a proper map for $1\le \ell \le L_k$, with
\begin{equation}\label{4b} \sum^{L_k}_{\ell=1}\deg\!\left(G^k\colon V^k_\ell\to B(0,M)\right)=D^k.
\end{equation}
By construction,
\[\bigcup^{L_{k+1}}_{\ell=1}\overline{V}^{k+1}_{\ell}
\subset \bigcup^{L_{k}}_{\ell=1} V^k_\ell.\]

\begin{proof}[Proof of Theorem \ref{T1a} for functions with pits effect]
Let $V_1^k,\dots,V_{L_k}^k$ be as above and put
\[V^k=\bigcup^{L_k}_{\ell=1}V^k_\ell.\]
Then $n(V^k,a,G^k)= D^k$ for all $a\in B(0,M)$ by~\eqref{4b}. Thus
\begin{equation}\label{4c} A(V^k,G^k)\ge C_1D^k
\end{equation}
for some $C_1>0$. Suppose now that $\overline{V}^k\cap J(G)=\emptyset$. Then for each $x\in \overline{V}^k$ there exists $r_x>0$ such that
$B(x,2r_x)\cap J(G)=\emptyset$.
As in section~\ref{F w/o p e}, we deduce
from Lemma~\ref{lemma2c} and the definition of $J(G)$ that
\[A(B(x,r_x),G^k)\le c_xK_I(G^k)\le c_xK_I(G)^k\]
for some $c_x>0$.
Covering $\overline{V}^k$ by finitely many balls $B(x,r_x)$ we obtain
\[A(V^k,G^k)\le C_2K_I(G)^k\]
for some $C_2>0$. Since $K_I(G)=K_I(f)$ this contradicts~\eqref{4a} and~\eqref{4c} for
large~$k$. Thus $\overline{V}^k\cap J(G)\neq \emptyset$.

Since $G^k\colon V_\ell^k\to B(0,M)$ is proper and thus in particular surjective, $\overline{V}^k\subset B(0,M)$ and $J(G)$ is completely invariant, it follows that every set $V^k_\ell$ meets $J(G)$.
Thus $\card  J(G)\ge L_k$ for all $k$ and hence $\card  J(G)=\infty$.
As $G$ is conjugate to $f$,
the conclusion follows.
\end{proof}

\begin{proof}[Proof of Theorem \ref{T1d} for functions with pits effect]
We again use the terminology introduced above. Suppose first that
\begin{equation}\label{4d} L_k>K_I(G^k)\quad  \text{for some}\ k\in \N.
\end{equation}
By \cite[Lemma~2.1.5]{Siebert06}, each $V_j$ contains a fixed point $v_j$ of~$G$.
Similarly as in section~\ref{F w/o p e}
 we can now deduce from~\eqref{4d}, together with
Lemma~\ref{lemma2d} and Lemma~\ref{lemma2f},
that $\capacity\overline{O^-_{G^k}(v_j)}>0$ and hence that $\capacity BO(G)>0$.
Thus $\capacity BO(f)>0$.

Suppose now that~\eqref{4d} does not hold. Then
\begin{equation}\label{4e} L_k\le K_I(G^k)\le K_I(G)^k
\end{equation}
for all $k\in \N$. Put
\[X=\bigcap^\infty_{k=1}\bigcup^{L_k}_{\ell=1}\overline{V}_\ell^k.\]
Clearly $X$ is compact and $X\subset BO(G)$. Thus it suffices to prove that $\capacity X>0$. Suppose that this is not the case.
Then $\dim X=0$ by Lemma~\ref{lemma2h}.

This implies that for each $\eta>0$ there exists $k_0$ such that
\begin{equation}\label{4f} \diam V^k_\ell <\eta \quad \text{for}\ k\ge k_0 \
\text{and}\ 1\le  \ell\le L_k.
\end{equation}
In fact, suppose that there exist $\eta>0$ and $x_j,y_j\in V^{k_j}_{\ell_j}$ with $k_j\to \infty$ such that $|x_j-y_j|\ge \eta$. We may assume that $(x_j)$ and $(y_j)$ converge, say $x_j\to x_0$ and $y_j\to y_0$. Then $x_0,y_0\in X$. Since $\dim X<1$, there exists a hyperplane $H$ in $\R^n\backslash X$ that separates $x_0$ and $y_0$. For large $j$ the points $x_j$ and $y_j$ are on opposite sides of the hyperplane $H$ and thus there exists $z_j\in \overline{V}_j^{k_j}\cap H$. We may assume that $(z_j)$ converges, say $z_j\to z_0$, since otherwise we may pass to a subsequence. Then $z_0\in X\cap H$ since $X\cap H$ is compact. This contradicts $H\subset \R^n\backslash X$. Thus~\eqref{4f} holds.

It follows from~\eqref{4b} and~\eqref{4e} that for all $k\in \N$ there exists $\ell_k\in \{1,\dots,L_k\}$ such that
\begin{equation}\label{4g}
\deg\!\left(G^k\colon V_{\ell_k}^k\to B(0,M)\right)\ge \frac{D^k}{L_k}\ge \left(\frac{D}{K_I(G)}\right)^k.
\end{equation}
We fix a large $k$ and put $W_k=V_{\ell_k}^k$ and $W_j=G^{k-j}(W_k)$ for $0\le j\le k-1$ so that $W_0=B(0,M)$. Then $G\colon W_j\to W_{j-1}$ is a proper map for $1\le j\le k$. Put
\[P_j=\deg\!\left(G\colon W_j\to W_{j-1}\right).\]
Then
\[\prod^k_{j=1}P_j=\deg\!\left(G^k\colon W_k\to W_0\right)\]
so that
\begin{equation}\label{4h} \prod^k_{j=1}P_j \ge  \left(\frac{D}{K_I(G)}\right)^k
\end{equation}
by~\eqref{4g}. Let $w_k\in G^{-k}(0)\cap W_k$ and put $w_j=G^{k-j}(w_k)$ for $0\le j\le k-1$. Thus $w_j\in W_j$ for $0\le j\le k$ and $w_0=0$.

Let now $\varepsilon>0$ and choose $\delta >0$ such that $|G(x)-G(y)|<\varepsilon$ for $x,y\in \bigcup^L_{j=1}V_j$ satisfying $|x-y|<\delta$. We take $\eta<\min\{\varepsilon,\delta\}$ and choose $k_0$ such that~\eqref{4f} is satisfied.

Assuming that $k>k_0$, we denote by $Y_j$, for $k_0<j\le k$, the component
of $G^{-1}(B(w_{j-1},\varepsilon))$ that contains $w_j$. Then $Y_j\supset B(w_j,\delta)\supset\overline{W}_j$.

By Lemma~\ref{lemma2i} we have
\begin{align*}\capacity\!\left(Y_j,\overline{W}_j\right)
& \ge \frac{P_j}{K_I(G)}\capacity\! \left(G(Y_j),G(\overline{W}_j)\right)\\
& =\frac{P_j}{K_I(G)}\capacity\!\left(B(w_{j-1},\varepsilon),\overline{W}_{j-1}\right).
\end{align*}

By the extremality of the Gr\"otzsch condenser (Lemma~\ref{lemma2k}) and the lower bound for its capacity given by Lemma~\ref{lemma2l} we have
\begin{align*}
\capacity\!\left(B(w_{j-1},\varepsilon),\overline{W}_{j-1}\right)
& \ge \capacity E_G\left(\frac{\diam W_{j-1}}{\varepsilon}\right)\\
& \ge \omega_{n-1}\left(\log\left(\frac{\lambda_n \varepsilon}{\diam W_{j-1}}\right)\right)^{1-n}.
\end{align*}
On the other hand, \eqref{lemma2j} yields
\begin{align*}
      \capacity\!\left(Y_j,\overline{W}_j\right)
& \le \capacity\!\left(B(w_j,\delta),\overline{W}_j\right)\\
& \le \capacity\!\left(B(w_j,\delta),\overline{B}(w_j,\diam W_j)\right)\\
& = \omega_{n-1}\left(\log\left(\frac{\delta}{\diam W_j}\right)\right)^{1-n}.
\end{align*}
Combining the last three estimates we obtain
\[\left(\log\left(\frac{\delta}{\diam W_j}\right)\right)^{1-n}\ge \frac{P_j}{K_I(G)}\left(\log\left(\frac{\lambda_n\varepsilon}{\diam W_{j-1}}\right)\right)^{1-n}\]
for $k_0<j\le k$. Equivalently,
\begin{equation}\label{4i}
\log\left(\frac{\lambda_n\varepsilon}{\diam W_{j-1}}\right)\ge
\left(\frac{P_j}{K_I(G)}\right)^{1/(n-1)}\log \left(\frac{\delta}{\diam W_j}\right).
\end{equation}
Now
\[\log\left(\frac{\lambda_n\varepsilon}{\diam W_{j-1}}\right)=\log\left(\frac{\delta}{\diam W_{j-1}}\right)+\log\frac{\lambda_n \varepsilon}{\delta}.\]
Given $\tau>1$, we may choose $\eta$ so small that $\log(\lambda_n \varepsilon /\delta)\le (\tau-1)\log (\delta/\eta)$. Then
\[\log\left(\frac{\lambda_n\varepsilon}{\diam W_{j-1}}\right)\le \tau \log\left(\frac{\delta}{\diam W_{j-1}}\right)\]
for $k_0<j\le k$. Hence~\eqref{4i} takes the form
\[\log\left(\frac{\delta}{\diam W_{j}}\right)\le \tau\left(\frac{K_I(G)}{P_j}\right)^{1/(n-1)}\log\left(\frac{\delta}{\diam W_{j-1}}\right)\]
for $k_0<j\le k$.
We conclude that
\[\log\left(\frac{\delta}{\diam W_k}\right)
\le \tau^{k-k_0}\left(\frac{K_I(G)^{k-k_0}}{\prod^k_{j=k_0+1}P_j}\right)^{1/(n-1)}
\log\left(\frac{\delta}{\diam W_{k_0}}\right).\]
Using~\eqref{4h} we obtain
\[
\log\left(\frac{\delta}{\diam W_k}\right)
\leq C_0 \left(\frac{\tau^{n-1}K_I(G)^2}{D}\right)^{k/(n-1)}
\log\left(\frac{\delta}{\diam W_{k_0}}\right)
\]
for some constant $C_0$.
With
$$\delta_0=\inf\limits_{1\le \ell\le L_{k_0}}\diam V^{k_0}_\ell
$$
 we obtain
\[
\log\left(\frac{\delta}{\eta}\right)
\le C_0\left(\frac{\tau^{n-1}K_I(G)^2}{D}\right)^{k/(n-1)}
\log\left(\frac{\delta}{\delta_0}\right).
\]
For $\tau$ close to 1 and large $k$ this contradicts~\eqref{4a}.
\end{proof}

\section{Proof of Theorems \ref{T1b}, \ref{T1c}, \ref{T1e}, \ref{T1f}, \ref{T1g}, \ref{T1h} and \ref{T1j}}\label{ProofT1bcefgh}

\begin{proof}[Proof of Theorem \ref{T1b}] Let $f\colon \C\to \C$ be an entire transcendental function and let $J_0(f)$ be the classical Julia set; that is, the set of all $z\in \C$ where the iterates of $f$ do not form a normal family. We refer to~\cite{Bergweiler93} for the basic properties of $J_0(f)$. Let $J(f)$ be as in Definition~\ref{D1a}, with $n=2$ so that $\R^n=\R^2=\C$.

Let $x\in J_0(f)$ and let $U$ be a neighbourhood of~$x$. It is a simple consequence of
Montel's theorem that
$\card\left(\C\backslash O^+_f (U)\right)\le 1$
and hence that
\begin{equation}\label{5a}
\capacity\!\left(\C\backslash O^+_f (U)\right)=0.
\end{equation}
This implies that $x\in J(f)$.

Let now $x\in J(f)$ and let $U$ be a neighbourhood of~$x$. Thus~\eqref{5a} holds. By a result of Baker~\cite{Baker75}, $J_0(f)$ contains continua and thus $\capacity J_0(f)>0$ by Lemma~\ref{lemma2h}.
Therefore $J_0(f)$ is not a subset of $\C\backslash O^+_f (U)$, which means that
${J_0(f)\cap O^+_f (U)\neq \emptyset}$.
 By the complete invariance of $J_0(f)$ we have $J_0(f)\cap U\neq\emptyset$.
As $U$ can be taken arbitrarily small and $J_0(f)$ is closed, we now deduce that $x\in J_0(f)$.
\end{proof}

In order to prove Theorem~\ref{T1c}, we formulate the following result of~\cite{Bergweiler09} already mentioned in the introduction.

\begin{lemma}\label{lemma5a} Let $f$ be an entire \qr map of transcendental type. Then $I(f)$ has at least one unbounded component.
\end{lemma}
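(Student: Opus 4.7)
The plan is to build an unbounded continuum inside $I(f)$ by a nested-intersection argument in the one-point compactification $S^n = \R^n \cup \{\infty\}$, exploiting the rapid growth from Lemma~\ref{lemma2a}. That lemma gives $M(r,f)/r \to \infty$, so I can fix $R_0$ large and inductively choose $R_0 < R_1 < R_2 < \cdots$ with $R_{k+1} < M(R_k, f)$ for every $k$. The target is a decreasing sequence of compact connected sets $\Gamma_0 \supset \Gamma_1 \supset \cdots$ in $S^n$, each containing $\infty$, with
\[
\Gamma_k \cap \R^n \;\subset\; \{x \in \R^n : |f^j(x)| \geq R_j \text{ for } 0 \leq j \leq k\}.
\]
Then $C := \bigcap_k \Gamma_k$ is non-empty, compact and connected as a nested intersection of such sets in $S^n$, contains $\infty$, and satisfies $C \cap \R^n \subset I(f)$; an unbounded component of $I(f)$ meeting $C \cap \R^n$ then provides the desired conclusion.

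The recurring construction step is as follows: given a compact connected $\Gamma \subset S^n$ with $\infty \in \Gamma$ and $S > 0$, and given an entire \qr map $g$ of transcendental type, I take $\Gamma'$ to be the connected component through $\infty$ of $(\Gamma \cap \{|g| \geq S\}) \cup \{\infty\}$. The non-triviality of $\Gamma'$ rests on a boundary-bumping argument: if $\Gamma$ contains points arbitrarily close to $\infty$ with $|g| \geq S$, then $\Gamma'$ must meet the level set $\{|g| = S\}$ and hence contain a point of $\R^n$. The existence of such points near $\infty$ follows from Rickman's Picard theorem (Lemma~\ref{rickman-thm}) combined with Lemma~\ref{lemma2a}: if every component of $\{|g| \geq S\}$ were bounded, then $\{|g| < S\}$ would contain a neighbourhood of $\infty$, forcing $g$ to remain bounded along many rays tending to infinity, which together with the transcendental type of $g$ and Rickman's theorem leads to a contradiction. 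Applied iteratively with $g = f^{k+1}$ (also \qr of transcendental type, with dilatation controlled by~\eqref{2c}), this construction produces $\Gamma_{k+1}$ from $\Gamma_k$ and $R_{k+1}$.

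The main obstacle is ensuring that $C$ is not the singleton $\{\infty\}$, i.e., that the component of $I(f)$ whose existence we seek genuinely extends into $\R^n$. A priori, a continuum through $\infty$ on which all $|f^j|$ are bounded below need not force $|f^{k+1}|$ to be unbounded near $\infty$, so the inductive step could collapse to a point. To handle this, I would work not with arbitrary continua but with closures of unbounded connected components of the open sets $\{x : |f^j(x)| > R_j,\ 0 \leq j \leq k\}$; these are open $n$-dimensional ``tracts'' extending to $\infty$, and the \qr analogue of the classical tract analysis (again using Rickman's theorem to rule out degenerate configurations and Lemma~\ref{lemma2a} to supply enough growth) guarantees that each tract contains a smaller subtract realising the next level $R_{k+1}$. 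With this quantitative form of nesting in place, $C \setminus \{\infty\}$ is a non-empty unbounded closed connected subset of $\R^n$ contained in $I(f)$, which yields the lemma.
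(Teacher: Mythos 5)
Note first that the paper does not prove Lemma~\ref{lemma5a} at all: it is quoted from \cite{Bergweiler09}, where it is essentially the main theorem. Your topological scaffolding (nested continua through $\infty$ in $S^n=\R^n\cup\{\infty\}$, boundary bumping, unboundedness of components of $\{|g|>S\}$) is the standard and correct way to organise such a proof, but the analytic core is missing. The step you delegate to ``the quasiregular analogue of the classical tract analysis'' --- that an unbounded component $T_k$ of $\{x:|f^j(x)|>R_j,\ 0\le j\le k\}$ must contain points (indeed an unbounded subtract) where $|f^{k+1}|>R_{k+1}$ --- is precisely the hard part, and none of the tools you invoke delivers it. All that is known on $T_k$ is that $f^k(T_k)$ is an open, connected, unbounded subset of $\{|y|>R_k\}$, and a quasiregular map of transcendental type can be bounded on such a set (a Zorich map $F$ satisfies $|F(x)|=e^{x_3}$ and so is bounded on the half-space $\{x_3<0\}$); hence the choice $R_{k+1}<M(R_k,f)$ gives no guarantee that $|f^{k+1}|$ ever exceeds $R_{k+1}$ on $T_k$, because the point of $S(0,R_k)$ where $|f|$ attains its maximum need not lie anywhere near $f^k(T_k)$. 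Rickman's theorem (Lemma~\ref{rickman-thm}) concerns omitted values and does not address this. Moreover your auxiliary implication ``if every component of $\{|g|\ge S\}$ were bounded, then $\{|g|<S\}$ would contain a neighbourhood of $\infty$'' is false as stated (infinitely many bounded components may accumulate at $\infty$); the true and relevant fact, that every component of $\{|g|>S\}$ is unbounded, follows from the maximum principle for open maps, not from Rickman's theorem.

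Second, even granting the subtract step, you must still rule out that the nested continua shrink to $\{\infty\}$, which requires the tracts to meet a fixed compact set; your construction does not arrange this, and asserting that $C\setminus\{\infty\}$ is non-empty does not make it so. In the classical scheme this is supplied by first producing a single point $x_0$ with $|f^k(x_0)|\ge R_k$ for all $k$ (a fast escaping point) and then taking the components through $x_0$ of the closed sets $f^{-k}(\{|y|\ge R_k\})$; these are unbounded by the open-mapping/separation argument and nest provided $R_{k+1}\ge M(R_k,f)$ (note this is the opposite inequality to yours), so their intersection is a continuum containing both $x_0$ and $\infty$. Producing such a point $x_0$ --- equivalently, the non-emptiness of a suitable fast escaping set for quasiregular maps of transcendental type --- is the substantive content of \cite{Bergweiler09} and requires considerably more than Lemma~\ref{lemma2a}. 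As it stands, your argument assumes the substance of the cited theorem rather than proving it.
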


\begin{proof}[Proof of Theorem \ref{T1c}] Let $x\in J(f)$.
Then~\eqref{1a1} holds for every neighbourhood $U$ of~$x$.
Since $\capacity BO(f)>0$ by Theorem~\ref{T1d} and $\capacity I(f)>0$ by Lemma~\ref{lemma5a} and Lemma~\ref{lemma2h}, we have
$BO(f)\cap O^+_f(U)\neq \emptyset$ and $I(f)\cap O^+_f (U)\neq \emptyset$.
Since both $BO(f)$ and $I(f)$ are completely invariant, this implies that
$BO(f)\cap U\neq \emptyset$ and
$I(f)\cap  U  \neq \emptyset$.
As $BO(f)\cap I(f) =\emptyset$,  we deduce that
$\partial BO(f)\cap U\neq \emptyset$ and $\partial I(f)\cap  U  \neq \emptyset$.
Since this holds for every neighbourhood $U$ of $x$, we conclude that
$x\in \partial BO(f)$ and $x \in \partial I(f)$.
\end{proof}

\begin{proof}[Proof of Theorem \ref{T1e}] Let $x\in A(\xi)$. Then there exists a neighbourhood $U$ of $x$ such that $f^k |_U\to\xi$ uniformly. Replacing $U$ by a smaller neighbourhood if necessary, we may achieve that
$O^+_f(U)\subset B(0,R)$
for some $R>0$. Thus
$\capacity\!\left(\R^n\backslash O^+_f(U)\right)>0$
which implies that $x\not \in J(f)$.

Let now $x\in J(f)$. Then $x\not\in A(\xi)$ by what we have proved already. Suppose that $x\not\in \partial A(\xi)$. Then there exists a neighbourhood $U$ of $x$ such that $U\cap A(\xi)=\emptyset$. Since $A(\xi)$ is completely invariant this implies that
$O^+_f(U)\cap A(\xi)=\emptyset$ and thus
$A(\xi)\subset \R^n\backslash O^+_f(U)$.
Hence $\capacity A(\xi)=0$ by~\eqref{1a1}, which is a contradiction since $A(\xi)$ is open.
\end{proof}
Theorems~\ref{T1f}, \ref{T1g} and~\ref{T1h} will follow from the next three results.
\begin{theorem}\label{T5a} Let $f$ be an entire \qr map of transcendental type. Suppose that
\begin{equation}\label{5b} \capacity\overline{O^-_f(x)}>0\quad \text{for all}\ x\in \R^n\backslash E(f).
\end{equation}
Then the conclusions {\rm(i)--(iv)} of Theorem~\ref{T1f} hold.
\end{theorem}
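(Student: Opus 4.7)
The plan is to establish (i) as the main step and then derive (ii), (iii), (iv) from it in turn. The only real technical point lies in (i): starting from a point of $\overline{O^-_f(x)}$ that happens to lie in the open set $O^+_f(U)$, one must actually extract a genuine preimage of $x$ inside $U$, rather than merely a limit of such preimages. This is where the openness of $O^+_f(U)$---and hence the fact that non-constant quasiregular maps are open---enters.

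For (i), let $y\in J(f)$, $x\in\R^n\backslash E(f)$, and fix any open neighbourhood $U$ of $y$. Then $\capacity(\R^n\backslash O^+_f(U))=0$ by Definition~\ref{D1a}, while $O^+_f(U)$ is open. Hypothesis~\eqref{5b} gives $\capacity\overline{O^-_f(x)}>0$, so $\overline{O^-_f(x)}$ cannot be contained in the capacity-zero set $\R^n\backslash O^+_f(U)$; that is, $O^+_f(U)\cap\overline{O^-_f(x)}\ne\emptyset$. Being open, $O^+_f(U)$ therefore meets $O^-_f(x)$ itself, at some $z=f^k(u)$ with $u\in U$ and $k\geq 1$, while $f^m(z)=x$ for some $m\geq 0$. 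Hence $f^{m+k}(u)=x$ with $m+k\geq 1$, so $u\in O^-_f(x)\cap U$, and as $U$ was arbitrary $y\in\overline{O^-_f(x)}$.

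Conclusion (ii) is now immediate: $J(f)$ is closed (if $y\notin J(f)$, the open neighbourhood $U$ witnessing this lies entirely in $\R^n\backslash J(f)$), and complete invariance of $J(f)$ gives $O^-_f(x)\subset J(f)$ whenever $x\in J(f)$, hence $\overline{O^-_f(x)}\subset J(f)$; combined with (i) this gives equality. For (iii), let $U$ be an open set meeting $J(f)$ and pick $y\in U\cap J(f)$ to conclude from Definition~\ref{D1a} that $\capacity(\R^n\backslash O^+_f(U))=0$. If $x\in\R^n\backslash O^+_f(U)$ and $x\notin E(f)$, the argument of (i) produces $u\in O^-_f(x)\cap U$ with $f^{m+k}(u)=x$ and $m+k\geq 1$, so $x\in f^{m+k}(U)\subset O^+_f(U)$, a contradiction.

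Finally for (iv), $J(f)$ is closed, so it suffices to rule out isolated points. Suppose for contradiction that $y\in J(f)$ is isolated, so that $U\cap J(f)=\{y\}$ for some open $U$. By (iii), $\R^n\backslash E(f)\subset O^+_f(U)$, so every $z\in J(f)\backslash E(f)$ equals $f^k(u)$ for some $u\in U$ and $k\geq 1$; complete invariance then forces $u\in J(f)\cap U=\{y\}$, giving $z\in O^+_f(y)$. Hence $J(f)\backslash E(f)\subset O^+_f(y)$ is countable. On the other hand, Theorem~\ref{T1a} together with the finiteness of $E(f)$ (Rickman's theorem) lets us pick $x_0\in J(f)\backslash E(f)$; then (ii) gives $J(f)=\overline{O^-_f(x_0)}$, which by~\eqref{5b} has positive capacity and is therefore uncountable (countable sets have $n$-capacity zero). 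Since $E(f)$ is finite, $J(f)\backslash E(f)$ is also uncountable, a contradiction.
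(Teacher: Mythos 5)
Your argument is correct, and for conclusions (i)--(iii) it is essentially the paper's own proof: both use that $\capacity\!\left(\R^n\backslash O^+_f(U)\right)=0$ together with~\eqref{5b} forces $O^+_f(U)\cap\overline{O^-_f(x)}\neq\emptyset$, then the openness of $O^+_f(U)$ (each $f^k$ is a non-constant \qr map, hence open) to hit $O^-_f(x)$ itself, which simultaneously yields $x\in O^+_f(U)$ and $U\cap O^-_f(x)\neq\emptyset$; (ii) and (iii) then follow exactly as you say. The genuine difference is in (iv). The paper stays inside the backward-orbit picture: it observes that, by (ii), non-periodic points of $J(f)$ are not isolated, that $J(f)\backslash E(f)$ contains non-periodic points (if a point there is periodic, its backward orbit supplies non-periodic points), and then uses (ii) once more to propagate non-isolatedness to all of $J(f)$. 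You instead assume an isolated point $y$, use (iii) plus complete invariance to squeeze $J(f)\backslash E(f)$ into the countable set $O^+_f(y)$, and contradict the fact that $J(f)=\overline{O^-_f(x_0)}$ has positive capacity by~\eqref{5b}. Your route is shorter and arguably cleaner, but it imports two ingredients the paper's version of this step does not need to make explicit: Theorem~\ref{T1a} (harmless, since it is proved in Sections~3--4 independently of Section~5; the paper's argument also tacitly needs $J(f)\backslash E(f)\neq\emptyset$) and the fact that countable sets have zero conformal capacity. The latter is true and standard, but it is not among the lemmas quoted in the paper, so a one-line justification would be appropriate -- e.g.\ a singleton has zero $n$-capacity since by monotonicity $\capacity(B(x,1),\{x\})\le\capacity\!\left(B(x,1),\overline{B}(x,r)\right)=\omega_{n-1}\left(\log(1/r)\right)^{1-n}\to0$ as $r\to 0$, and the variational $n$-capacity is countably subadditive. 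With that line added (or with the paper's non-periodic-point argument substituted), your proof is complete.
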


\begin{proof} Let $x\in \R^n\backslash E(f)$ and let
 $U$ be an open set intersecting $J(f)$.
It follows from~\eqref{1a1} and~\eqref{5b} that
$O^+_f(U)\cap\overline{O^-_f(x)}\neq \emptyset$.
Since $O^+_f(U)$
is open and $\overline{O^-_f(x)}$ is closed this actually yields that
$O^+_f(U)\cap O^-_f(x)\neq \emptyset$.
This in turn yields both
\begin{equation}\label{5c} x\in O^+_f(U)
\end{equation}
and
\begin{equation}\label{5d} U\cap O^-_f(x)\neq \emptyset.
\end{equation}
Since~\eqref{5d} holds for every open set $U$ intersecting $J(f)$,
we have $J(f)\subset\overline{O^-_f(x)}$ and thus~(i).
Now (ii) follows from (i) since $J(f)$ is closed and
completely invariant.
Since~\eqref{5c} holds for all $x\in \R^n\backslash E(f)$ we obtain
$\R^n\backslash E(f)\subset O^+_f(U)$ which immediately yields~(iii).

To prove (iv) we only have to show that $J(f)$ does not contain isolated
points. Now it follows immediately from (ii) that non-periodic points in $J(f)$ are
not isolated in $J(f)$. Also, $J(f)\backslash E(f)$ contains non-periodic points, since if
$y\in J(f)\backslash E(f)$ is periodic, then all points in
$O^-_f(y)\backslash O^+_f(y)$ are non-periodic.
Thus there exists a non-isolated point $x\in J(f)\backslash E(f)$.
It now follows from (ii) that no point of $J(f)$ is isolated.
\end{proof}
\begin{theorem}\label{T5a1}
Let $f$ be an entire \qr map of transcendental type and let $p\in\N$.
If $\capacity J(f^p)>0$, then $J(f^p)=J(f)$.
\end{theorem}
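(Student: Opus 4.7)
The inclusion $J(f^p)\subset J(f)$ holds without any capacity hypothesis: any neighbourhood $U$ satisfies $O^+_{f^p}(U)\subset O^+_f(U)$, so $\R^n\backslash O^+_f(U)$ is a closed subset of the zero-capacity set $\R^n\backslash O^+_{f^p}(U)$, hence itself of capacity zero. All the work is in the reverse inclusion $J(f)\subset J(f^p)$; this is where the hypothesis $\capacity J(f^p)>0$ enters.

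My plan is to fix $x\in J(f)$ and an open neighbourhood $U$ of $x$ and verify $\capacity(\R^n\backslash O^+_{f^p}(U))=0$. Since $\capacity(\R^n\backslash O^+_f(U))=0$ and $\capacity J(f^p)>0$, the set $J(f^p)$ cannot be contained in $\R^n\backslash O^+_f(U)$, so there exist $k\geq 1$ and $u\in U$ with $y:=f^k(u)\in J(f^p)$. Writing $k=pm+j$ with $0\leq j<p$, the relation $f^{pm}(f^j(u))=y$ combined with the complete $f^p$-invariance of $J(f^p)$ forces $f^j(u)\in J(f^p)$. Hence $f^j(U)$ is an open neighbourhood of a point of $J(f^p)$, so $\capacity(\R^n\backslash O^+_{f^p}(f^j(U)))=0$, and since $O^+_{f^p}(f^j(U))=f^j(O^+_{f^p}(U))$, setting $A:=O^+_{f^p}(U)$ this becomes $\capacity(\R^n\backslash f^j(A))=0$.

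The key manoeuvre is then to apply $f^{p-j}$ and exploit the forward $f^p$-invariance $f^p(A)\subset A$: since $f^{p-j}(f^j(A))=f^p(A)\subset A$, I get $\R^n\backslash A\subset\R^n\backslash f^{p-j}(f^j(A))$, so the theorem will follow once I establish the auxiliary claim that, for any open $B\subset\R^n$ with $\capacity(\R^n\backslash B)=0$, one also has $\capacity(\R^n\backslash f^{p-j}(B))=0$.

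The main obstacle is this auxiliary claim: a qr map must be shown to send full-capacity open sets to full-capacity sets. A point $y\notin f^{p-j}(B)$ either lies in the finite exceptional set $\R^n\backslash f^{p-j}(\R^n)$ (finite by Lemma~\ref{rickman-thm}) or has all of its preimages in the closed cap-zero set $\R^n\backslash B$, in which case $y\in f^{p-j}(\R^n\backslash B)$. Writing $\R^n\backslash B$ as the increasing union of the compact cap-zero sets $C_m:=(\R^n\backslash B)\cap\overline{B}(0,m)$, Lemma~\ref{lemma2i} applied to any bounded open $A'\supset C_m$ yields $\capacity(f^{p-j}(A'),f^{p-j}(C_m))\leq K_I(f^{p-j})\capacity(A',C_m)=0$, so each $f^{p-j}(C_m)$ has capacity zero; countable subadditivity of zero $n$-capacity then gives $\capacity f^{p-j}(\R^n\backslash B)=0$. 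Together with the finite exceptional contribution this yields $\capacity(\R^n\backslash f^{p-j}(B))=0$, closing the argument.
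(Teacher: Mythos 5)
Your proposal is correct and follows essentially the same route as the paper's proof: locate a point of $J(f^p)$ on the forward $f$-orbit of $U$, adjust the exponent modulo $p$, and transport the capacity-zero complement back to $O^+_{f^p}(U)$ using the fact (via Lemma~\ref{lemma2i}) that quasiregular maps send sets of capacity zero to sets of capacity zero, together with the finiteness of the omitted values/exceptional set coming from Rickman's theorem. The only cosmetic difference is that you reduce the exponent at the source, using complete invariance of $J(f^p)$ to get $f^j(u)\in J(f^p)$ with $j<p$ and then applying $f^{p-j}$ together with $f^p\!\left(O^+_{f^p}(U)\right)\subset O^+_{f^p}(U)$, whereas the paper writes $m=kp-l$ and applies $f^l$ to $O^+_{f^p}(f^m(U))$; the underlying mechanism is identical.
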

\begin{proof}
It follows immediately from the definition that  $J(f^p)\subset J(f)$.
In order to prove the reverse inclusion, let $x\in J(f)$ and
let $U$ be a neighbourhood of~$x$.
Since $\capacity\!\left(\R^n\backslash O^+_f(U)\right)=0$ but
$\capacity J(f^p)>0$, there exists $y\in  O^+_f(U)\cap J(f^p)$,
say $y=f^m(z)$ where $m\in\N$ and $z\in U$.
As $V:=f^m(U)$ is a neighbourhood of $y\in J(f^p)$, we have
\begin{equation}\label{5d0}
\capacity\!\left(\R^n\backslash O^+_{f^p}(V)\right)=0.
\end{equation}
We write $m=kp-l$ with $k\in\N$ and
$l\in\{0,1,\dots,p-1\}$.
Then $V= f^{kp}(f^{-l}(U))$ and hence
\[
f^l\!\left(O^+_{f^p}(V)\right)
\subset
f^l\!\left(  O^+_{f^p}\!\left( f^{-l}(U)\right) \right)
=O^+_{f^p}(U).
\]
Noting that $f^l(\R^n)\supset \R^n\backslash E(f^l)$ we deduce that
\begin{equation}\label{5d1}
\begin{aligned}
\R^n\backslash O^+_{f^p}(U)
&\subset \R^n\backslash f^l\!\left(O^+_{f^p}(V)\right)\\
&\subset \left(f^l(\R^n)\backslash f^l\!\left(O^+_{f^p}(V)\right)\right) \cup E(f^l)\\
&\subset f^l\!\left(\R^n\backslash O^+_{f^p}(V)\right) \cup E(f^l).
\end{aligned}
\end{equation}
Lemma~\ref{lemma2i} implies in particular that
quasiregular mappings map sets of capacity zero to sets of
capacity zero.
Thus
$\capacity f^l\!\left(\R^n\backslash O^+_{f^p}(V)\right) =0$
by~\eqref{5d0}.
Since
$E(f^l)$ is finite, we can now deduce from~\eqref{5d1} that
$\capacity\!\left(\R^n\backslash O^+_{f^p}(U)\right)=0$.
Since this holds for every neighbourhood $U$ of $x$,
we conclude that $x\in J(f^p)$.
\end{proof}
\begin{theorem}\label{T5b} Let $f$ be an entire \qr map of transcendental type
which is
locally Lipschitz continuous. Then $\dim\overline{O^-_f(x)}>0$ for all $x\in \R^n\backslash E(f)$.
\end{theorem}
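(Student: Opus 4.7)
The plan is to follow the strategy of Theorems~\ref{T1i} and~\ref{T1d}, but to exploit the Lipschitz hypothesis by invoking Lemma~\ref{lemma2d}(i) in place of Lemma~\ref{lemma2d}(ii). As in Sections~\ref{F w/o p e} and~\ref{F w p e}, one splits according to whether $f$ has the pits effect.

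Suppose first that $f$ does not have the pits effect. Choose $N>4K_I(f)$ and carry out the construction of Section~\ref{F w/o p e}, obtaining, for each sufficiently large $m$, open sets $U_1,\dots,U_L\subset P$ with $L=3N/4$, pairwise disjoint closures separated by some fixed $\delta>0$, and with the property that every $\overline{U}_i$ is covered by at least $N/4$ of the sets $g_m(\overline{U}_j)$. On the compact set $X=\bigcup_{j=1}^L \overline{U}_j$, every $y\in X$ therefore has at least $N/4\geq 2$ preimages in $X$ under $g_m$, mutually $\delta$-separated. Since $f$ is locally Lipschitz and $R_m X$ is compact, $f$ is Lipschitz on $R_m X$ with some finite constant $L^*_m$, whence the rescaled map $g_m$ is Lipschitz on $X$ with the same constant. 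Lemma~\ref{lemma2d}(i) then gives $\dim\overline{O^-_{g_m}(y)}\geq \log(N/4)/\log L^*_m >0$ for every $y\in X$. Suppose instead that $f$ does have the pits effect. After reducing away the (easier) non-quasinormal subcase exactly as in Section~\ref{F w p e}, assume that $(g_m)$ is quasinormal and obtain $L\geq 2$ open sets $V_1,\dots,V_L\subset B(0,M)$ with pairwise disjoint closures separated by some fixed $\delta>0$, such that $G=g_m\colon V_j\to B(0,M)$ is a proper, hence surjective, map for each $j$. Then on $X=\bigcup_{j=1}^L\overline{V}_j$ each $y\in X$ has at least one preimage in each $V_j$, so at least $L\geq 2$ mutually $\delta$-separated preimages in $X$. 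As above, $G$ is Lipschitz on $X$ with some finite constant, and Lemma~\ref{lemma2d}(i) yields $\dim\overline{O^-_G(y)}>0$ for every $y\in X$.

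In both cases, the affine rescaling $x\mapsto R_m x$ is bi-Lipschitz and thus preserves Hausdorff dimension, so $\dim\overline{O^-_f(v)}>0$ for every $v\in X_m:=R_m X$. To finish, fix $x\in\R^n\backslash E(f)$; then $O^-_f(x)$ is infinite, and for all sufficiently large $m$ one can locate $v\in X_m\cap O^-_f(x)$, precisely as in the source proofs: in the no-pits-effect case via Lemma~\ref{lemma2b} applied to the non-normal sequence $(h_m)$ on some $U_j$, and in the pits-effect case via the surjectivity of $f\colon V_j^m\to B(0,MR_m)$ combined with $O^-_f(x)\cap B(0,MR_m)\neq\emptyset$. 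Since $v\in O^-_f(x)$ gives $O^-_f(v)\subset O^-_f(x)$, monotonicity of Hausdorff dimension delivers $\dim\overline{O^-_f(x)}\geq\dim\overline{O^-_f(v)}>0$. The main delicate point to watch is that the Lipschitz constant $L^*_m$ of $f$ on $X_m$ may grow without bound in $m$, so the dimension estimate degenerates as $m\to\infty$; however, mere positivity for a single $m$ suffices here, so this is not a real obstacle.
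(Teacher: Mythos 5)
Your proposal is correct and takes essentially the same route as the paper: split according to the pits effect, reuse the constructions of Sections~\ref{F w/o p e} and~\ref{F w p e}, and apply Lemma~\ref{lemma2d}(i) with the Lipschitz constant of the (rescaled) map on a compact set, noting that only positivity of the resulting dimension bound is needed. The only harmless deviation is in the pits-effect case, where the paper applies Lemma~\ref{lemma2d}(i) to all of $B(0,M)$ -- every point there has $L\ge 2$ separated preimages lying in the $V_j$ -- and thus obtains $\dim\overline{O^-_f(x)}>0$ for all $x\in B(0,MR_m)$, hence for all $x\in\R^n$ as $R_m\to\infty$, without needing your extra step of locating a point of $O^-_f(x)$ inside $\bigcup_j\overline{V}_j$ (and without using $x\notin E(f)$ there).
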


\begin{proof} For functions not having the pits effect, the proof can be carried out in exactly the same way as the proof of Theorem~\ref{T1i},
using part (i) of Lemma~\ref{lemma2d} instead of part~(ii).

For functions with pits effect we proceed as in section~\ref{F w p e} to obtain subsets
$V_1,\dots,V_L$ of $B(0,M)$ with disjoint closures such that $G\colon V_j\to B(0,M)$
is proper for $1\le j\le L$, where $G=g_m$ is conjugate to $f$ by the map
$x\mapsto R_m x$. By hypothesis, there exists $\lambda>0$ such that
\[
|G(x)-G(y)|\le \lambda |x-y|\quad \text{for}\ x,y\in B(0,M).
\]
Lemma~\ref{lemma2d}, (i),  now yields
\[
\dim\overline{O^-_G(x)}\ge \frac{\log L}{\log \lambda}>0
\]
 for all $x\in B(0,M)$. It follows that $\dim\overline{O^-_f(x)}>0$ for all
$x\in B(0,MR_m)$, and as $R_m\to \infty$ this holds for all $x\in \R^n$.
\end{proof}
\begin{proof}[Proof of Theorem \ref{T1f} and Theorem \ref{T1g}] The
conclusions (i)--(iv) of Theorem~\ref{T1f}
follow immediately from
Theorem~\ref{T5b},
Lemma~\ref{lemma2h}
and
Theorem~\ref{T5a}.
To prove  conclusion (v) and Theorem~\ref{T1g}
let $p\in\N$. Since $f^p$ is also
locally Lipschitz continuous we have $J(f^p)=\overline{O^-_{f^p}(x)}$
for all $x\in J(f^p)\backslash E(f^p)$ by~(ii).
Note here that $J(f^p)\backslash E(f^p)\neq \emptyset$ since
$\card J(f^p)=\infty$ by Theorem~\ref{T1a} while $E(f^p)$ is finite.
Thus $\dim J(f^p)>0$ by Theorem~\ref{T5b}. This proves
Theorem~\ref{T1g}.
Moreover, together with Lemma~\ref{lemma2h} this yields $\capacity J(f^p)>0$.
Conclusion (v) of Theorem~\ref{T1f} now follows from Theorem~\ref{T5a1}.
\end{proof}

\begin{proof}[Proof of Theorem \ref{T1h}] The
conclusions (i)--(iv) are immediate consequences of
Theorems~\ref{T1i} and~\ref{T5a}.
In order to prove (v), let $p\in\N$
and put $C=M(1,f^{p-1})$.
Since $f$ does not have the pits effect we see that
if $N\in\N$, then there exist $c>1$ and $\varepsilon>0$
such that for arbitrarily large $R$ the set
\[\left\{x\in \R^n:R\le |x|\le cR,\ |f(x)|\le 1\right\}\]
cannot be covered by $N$ balls of radius $\varepsilon R$.
This implies that
\[\left\{x\in \R^n:R\le |x|\le cR,\ |f^p(x)|\le C\right\}\]
cannot be covered by $N$ such balls.
Thus $g(x):=f^p(Cx)/C$ does not have the pits effect.
By Corollary~\ref{C1a} we have $\capacity J(g)>0$. Since $f^p$ and $g$
are conjugate this implies that $\capacity J(f^p)>0$.
Conclusion (v) now follows from Theorem~\ref{T5a1}.
\end{proof}

\begin{remark}
In the above proof, instead of passing from $f^p$ to $g$ we could also have
used Theorem~\ref{T8a} which implies that the inequality $|f(x)|\leq 1$
in the definition of the pits effect can be replaced by $|f(x)|\leq C$
for any positive constant~$C$.
\end{remark}

The following result will be used to prove Theorem~\ref{T1j}.

\begin{theorem}\label{T5c}
Let $f$ be an entire quasiregular map of transcendental type. If $x\in\R^n\backslash E(f)$ and the local index is bounded on $\overline{O^-_f(x)}$, then $\capacity \overline{O^-_f(x)}>0$.
\end{theorem}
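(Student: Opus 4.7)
My plan is to split the argument according to whether $f$ has the pits effect. If $f$ does not, then Theorem~\ref{T1i} applies directly to give $\capacity\overline{O^-_f(x)}>0$, and the local-index hypothesis is unused. I therefore concentrate on the pits-effect case, adapting the construction of Section~\ref{F w p e}. Let $I_0$ be the given bound on $i(\cdot,f)$ on $\overline{O^-_f(x)}$, and choose $m$ so large that the setup of Section~\ref{F w p e} produces $G=g_m$ with proper maps $G\colon V_j\to B(0,M)$ of total degree $D=\sum_j D_j$ satisfying $D>I_0K_I(f)^2$, and also so that $x/R_m\in B(0,M)$.

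The crucial new observation is that, since each $G^k\colon V^k_\ell\to B(0,M)$ is proper and $x/R_m\in B(0,M)$, the point $x/R_m$ has preimages in every component $V^k_\ell$. For any such preimage $z$, we have $R_m z\in O^-_f(x)$ and each intermediate image $f^j(R_m z)$, $j=0,\dots,k-1$, also lies in $O^-_f(x)$, so $i(z,G^k)=\prod_{j=0}^{k-1}i(f^j(R_m z),f)\le I_0^k$. Consequently the number of \emph{distinct} preimages of $x/R_m$ under $G^k$ inside $\bigcup_\ell V^k_\ell$ is at least $D^k/I_0^k$, and the same bound applies to any point in the scaled backward orbit of $x$.

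I now follow the two sub-cases of the proof of Theorem~\ref{T1d} in Section~\ref{F w p e}. If $L_k>K_I(G^k)$ for some $k$, then the $L_k$ preimages of $x/R_m$ sitting in the pairwise disjoint closures $\overline{V^k_\ell}$ are separated by the fixed number $\delta=\min_{\ell\ne\ell'}\dist(\overline{V^k_\ell},\overline{V^k_{\ell'}})>0$. Applying Lemma~\ref{lemma2d}(ii) to $G^k$ with H\"older exponent $\alpha=K_I(G^k)^{1/(1-n)}$ from Lemma~\ref{lemma2m} gives $H_h(\overline{O^-_{G^k}(x/R_m)})>0$ for $h(t)=(\log(1/t))^{(\log L_k)/(\log\alpha)}$, and Lemma~\ref{lemma2f} converts this to positive capacity; rescaling via $x\mapsto R_m x$ then yields $\capacity\overline{O^-_f(x)}>0$.

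In the complementary sub-case $L_k\le K_I(G^k)$ for all $k$, I rerun the Gr\"otzsch-condenser argument of Section~\ref{F w p e} with $w_0:=x/R_m$ replacing $0$: choose $\ell_k$ via~\eqref{4b} so that $\deg(G^k\colon V^k_{\ell_k}\to B(0,M))\ge(D/K_I(f))^k$, take $w_k\in V^k_{\ell_k}\cap G^{-k}(x/R_m)$ and let $w_j=G^{k-j}(w_k)\in\overline{O^-_f(x)}/R_m$. The Gr\"otzsch chain then bounds $\log(\delta/\diam W_k)$ from above by a quantity decaying like $(K_I(f)^2/D)^{k/(n-1)}$, and so yields a contradiction provided we know $\diam W_k<\eta$ for large $k$. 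The main obstacle is this smallness step: Section~\ref{F w p e} obtains it from the inclusion $\bigcap_k\bigcup_\ell\overline{V^k_\ell}\subset BO(G)$ combined with $\dim BO(G)=0$, but here $V^k_{\ell_k}$ is not a subset of $\overline{O^-_f(x)}/R_m$. My proposed workaround is that, by the local-index count, each $V^k_{\ell_k}$ contains at least $(D/(I_0K_I(f)))^k$ distinct preimages of $x/R_m$ lying in $\overline{O^-_f(x)}/R_m$, and since $D/(I_0K_I(f))>1$ by the choice of $m$, these proliferating preimages, together with $\dim\overline{O^-_f(x)}=0$ under the contradiction hypothesis, should allow the hyperplane-separation trick of Section~\ref{F w p e} to be applied within this discrete subset to force $\diam V^k_{\ell_k}<\eta$, giving the required contradiction.
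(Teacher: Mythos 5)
Your first sub-case (some $k$ with $L_k>K_I(G^k)$) is essentially sound, and the preimage count via multiplicativity of the local index is correct, but the second sub-case contains a genuine gap, and it is exactly the step you flag as a ``workaround''. In the paper's proof of Theorem~\ref{T1d} the smallness $\diam W_j<\eta$ is obtained because the nested sets satisfy $\bigcap_k\bigcup_\ell\overline{V}^k_\ell\subset BO(G)$, and the hyperplane-separation argument works since any limit of points $z_j\in\overline{V}^{k_j}_{\ell_j}\cap H$ with $k_j\to\infty$ lies in that intersection. Under your contradiction hypothesis you only know $\dim\overline{O^-_G(x/R_m)}=0$; you have no control whatsoever on $\bigcap_k\bigcup_\ell\overline{V}^k_\ell$, so a hyperplane avoiding $\overline{O^-_G(x/R_m)}$ may perfectly well meet it, and no contradiction results. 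Restricting the separation trick to the ``proliferating'' discrete preimages of $x/R_m$ inside $V^k_{\ell_k}$ does not help either: even if the preimage set had small diameter, the Gr\"otzsch chain needs smallness of the actual domains $W_j=G^{k-j}(V^k_{\ell_k})$ (one needs $\overline{W}_j\subset B(w_j,\delta)\subset Y_j$ and the uniform-continuity step for every $k_0<j\le k$), and the diameter of $W_j$ is not bounded by the diameter of a finite subset of it. So as written the pits-effect case is not proved.

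The paper's own argument avoids this machinery entirely and uses the index bound in a different, decisive way: with $D>K_I(f)\max\{i(y,f):y\in\overline{O^-_f(x)}\}$ and $X=\overline{O^-_G(x/R_m)}$, it suffices by Lemmas~\ref{lemma2d}--\ref{lemma2f} to show that every $y\in X$ has $P$ preimages under the single map $G$ that are $\delta$-separated for one fixed $\delta>0$, where $P$ is the least integer exceeding $K_I(f)$. If no such uniform $\delta$ existed, a compactness argument produces $y_0\in X$ with $G^{-1}(y_0)\subset\{x_1,\dots,x_{P-1}\}$; since $n(U,y_0,G)=D$, some $x_j$ has $i(x_j,G)\ge D/(P-1)\ge D/K_I(f)$, contradicting the index bound because $R_mx_j\in\overline{O^-_f(x)}$. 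You may want to replace your second sub-case (indeed the whole $L_k$ dichotomy) by this uniform-separation argument, which is where the hypothesis on the local index is actually needed.
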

\begin{proof}
Let $x\in\R^n\backslash E(f)$ and suppose that the local index $i(y,f)$ is bounded on $\overline{O^-_f(x)}$. By Theorem~\ref{T1i} we may assume that $f$ has the pits effect. Proceeding as in section~\ref{F w p e}, we find a subset $U\subset B(0,M)$ such that $G\colon U\to B(0,M)$ is a proper map of degree $D$, where $G(y)=g_m(y)=f(R_my)/R_m$ for $y\in U$. By choosing $R_m$ sufficiently large, we may assume that $x\in B(0,MR_m)$ and that
\begin{equation}\label{5e}
D>K_I(f)\max \{i(y,f):y\in\overline{O^-_f(x)}\}.
\end{equation}
Putting $X=\overline{O^-_G(x/R_m)}$, it will suffice to show that $\capacity X>0$.

As in the proof of Theorem~\ref{T1i}, we can use Lemmas~\ref{lemma2d}--\ref{lemma2f} to deduce that $\capacity X>0$ if each $y\in X$ has $P$ pre-images $x_1,\ldots,x_P$ under $G$ satisfying ${|x_i-x_j|\ge\delta}$ for $i\ne j$, where $\delta>0$ and $P$ is the least integer greater than $K_I(f)$. We suppose that this is not the case. Then there exist $\delta_k\to0$, $y_k\in X$ and $x^k_{1},\ldots,x^k_{P-1}\in G^{-1}(y_k)\subset X$ such that
\[ G^{-1}(y_k)\subset \bigcup_{j=1}^{P-1}B(x^k_{j},\delta_k). \]
Passing to a subsequence, we can assume that $y_k\to y_0\in X$ and $x^k_{j}\to x_j\in X$ as $k\to\infty$. It can then be shown that $G^{-1}(y_0)\subset\{x_1,\ldots,x_{P-1}\}$. However, $n(U,y_0,G)=D$ and so there must be some $j\in\{1,\ldots,P-1\}$ for which
\[ i(x_j,G)\ge D/(P-1) \ge D/K_I(f). \]
This leads to a contradiction with \eqref{5e} since $i(x_j,G)=i(R_mx_j,f)$ and $R_mx_j\in \overline{O^-_f(x)}$.
\end{proof}

\begin{proof}[Proof of Theorem \ref{T1j}]
If the local index is bounded on $J(f)$, then applying Theorem~\ref{T5c} to some $x\in J(f)\backslash E(f)$ yields $\capacity \overline{O^-_f(x)}>0$. Thus $\capacity J(f)>0$ by complete invariance. Moreover, arguing as in the proof of Theorem~\ref{T5a} shows that conclusions~(ii) and~(iv) of Theorem~\ref{T1f} hold.

By the complete invariance of $J(f)$ the local index of $f^p$ is
also bounded on $J(f)$. Thus, since always $J(f^p)\subset J(f)$, the local
index of $f^p$ is bounded on $J(f^p)$. Hence $\capacity J(f^p)>0$ by what
we have proved already. Now (v) follows from Theorem~\ref{T5a1}.

If the local index is bounded on $\R^n$, then the conclusion of Theorem~\ref{T1i} holds by Theorem~\ref{T5c}. The conclusions (i)--(iv) of Theorem~\ref{T1f} then follow from Theorem~\ref{T5a}.
\end{proof}

\section{The 2-dimensional case}\label{2dcase}
Let $f\colon \Omega\to \R^n$ be quasiregular.
For $n=2$, the branch set $B_f$ is a discrete subset of $\Omega$.
This is in contrast to the situation for $n\geq 3$, where
the $(n-2)$-dimensional Hausdorff measure of
$f(B_f)$ is positive unless $B_f=\emptyset$; see~\cite[Proposition III.5.3]{Rickman93}.

If $n=2$, the elements of $B_f$ are called \emph{critical points}.
For $x\in B_f$ we call $i(x,f)-1$ the \emph{multiplicity} of the critical point~$x$.
The following result~\cite[\S 1.3]{Steinmetz93} is known as the Riemann-Hurwitz Formula.

\begin{lemma}\label{lemma6a} Let $\Omega_1$ and $\Omega_2$ be domains in $\C$ of connectivity $c_1$ and $c_2$, respectively. Let $f\colon \Omega_1\to \Omega_2$ be a proper \qr map of degree $d$ and denote by $r$ the number of critical points of~$f$, counting multiplicity; that is,
\[r=\sum_{x\in B_f}(i(x,f)-1).\]
Then
\begin{equation}\label{6a} c_1-2=d(c_2-2)+r.
\end{equation}
\end{lemma}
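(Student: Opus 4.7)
The plan is to reduce to the classical Riemann--Hurwitz formula for holomorphic branched covers via Stoilow's factorization theorem, which is available since $n=2$. Every non-constant quasiregular map $f\colon \Omega_1\to\C$ can be written as $f=h\circ\phi$, where $\phi\colon \Omega_1\to\Omega_1'$ is a quasiconformal homeomorphism and $h\colon \Omega_1'\to\C$ is holomorphic. The homeomorphism $\phi$ preserves connectivity, so $\Omega_1'$ also has connectivity $c_1$; it preserves properness, so $h\colon \Omega_1'\to\Omega_2$ is proper of degree $d$; and it maps $B_f$ bijectively to $B_h$ with matching local indices, so the integer $r$ is the same. It therefore suffices to prove the formula for the holomorphic proper map $h$.

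For the holomorphic case, I would argue via Euler characteristic, using the fact that a plane domain of finite connectivity $c$ has $\chi=2-c$ (visible by realising it as a sphere with $c$ disjoint closed disks removed). Exhaust $\Omega_2$ by compact subsurfaces $\Omega_2^{(k)}$ with smooth boundary of the same connectivity $c_2$, chosen so that the finitely many critical values of $h$ lie in the interior of $\Omega_2^{(1)}$. Properness of $h$ makes $\Omega_1^{(k)}:=h^{-1}(\Omega_2^{(k)})$ a compact exhaustion of $\Omega_1$; and, for $k$ sufficiently large, the connectivity of $\Omega_1^{(k)}$ stabilises to $c_1$. Now triangulate $\Omega_2^{(k)}$ so that the critical values appear as vertices, and lift via $h^{-1}$ to a CW decomposition of $\Omega_1^{(k)}$. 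Over a regular vertex, edge, or face in $\Omega_2^{(k)}$ the preimage consists of exactly $d$ copies, while over a vertex $h(x)$ with $x$ a critical point the local normal form $z\mapsto z^{i(x,h)}$ collapses $i(x,h)$ expected preimages into a single vertex, losing $i(x,h)-1$ vertices. Summing the face, edge, and vertex counts gives
\[
\chi\!\left(\Omega_1^{(k)}\right)=d\,\chi\!\left(\Omega_2^{(k)}\right)-r,
\]
and substituting $\chi=2-c$ for both sides yields $c_1-2=d(c_2-2)+r$.

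The main obstacle is the non-compactness of $\Omega_1$ and $\Omega_2$ together with the need to verify that the connectivity of the exhausting surfaces $\Omega_1^{(k)}$ really equals $c_1$. This is the technical heart of the argument and relies on the properness of $h$: the preimages of the $c_2$ boundary curves of $\Omega_2^{(k)}$ partition into finitely many closed curves forming the boundary of $\Omega_1^{(k)}$, and the number of these curves (once $k$ is large enough that $\Omega_1^{(k)}$ captures all of the topology of $\Omega_1$) is precisely $c_1$. Once this is checked, the Euler characteristic bookkeeping is routine and the formula drops out.
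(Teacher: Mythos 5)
Your proposal is correct in outline, but it takes a genuinely different route from the paper, which in fact offers no proof at all: the lemma is quoted from Steinmetz's book for holomorphic maps, and the paper merely remarks (citing Milnor) that the formula holds for arbitrary ramified coverings, which covers planar quasiregular maps. You instead reduce to the holomorphic case via Stoilow factorization $f=h\circ\phi$ — a clean reduction, since the quasiconformal homeomorphism $\phi$ preserves connectivity, properness, degree, the branch set and the local indices, and it is essentially the reason planar quasiregular maps behave as ramified coverings — and then you reprove the holomorphic Riemann--Hurwitz formula by an Euler characteristic count over a compact exhaustion. What your route buys is a self-contained argument; what the paper's buys is brevity by outsourcing both the holomorphic case and the topological extension to the literature. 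Two points in your sketch deserve tightening. First, you assume from the start that $h$ has only finitely many critical values, which is not given a priori on a non-compact $\Omega_2$; but your own scheme repairs this, since the count on each compact piece gives $\chi(\Omega_1^{(k)})=d\,\chi(\Omega_2^{(k)})-r_k$ with $r_k$ the number of critical points inside $\Omega_1^{(k)}$, and once the connectivities (hence the Euler characteristics) stabilize, $r_k$ must stabilize, which forces finiteness of $r$ and yields the formula in the limit. Second, the stabilization of the connectivity of $\Omega_1^{(k)}=h^{-1}(\Omega_2^{(k)})$ at $c_1$ is indeed the technical heart: you should choose the exhaustion so that $\Omega_2\setminus\Omega_2^{(k)}$ has no compact components; then properness and openness of $h$ show that no boundary curve of $\Omega_1^{(k)}$ can bound a disk in $\Omega_1\setminus\Omega_1^{(k)}$ (such a disk would map onto an unbounded complementary piece, contradicting compactness of its image), which is what guarantees that for large $k$ the boundary curves of $\Omega_1^{(k)}$ correspond exactly to the $c_1$ ends of $\Omega_1$. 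With those two points made explicit, the argument is complete.
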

In~\cite{Steinmetz93} it is assumed that $f$ is holomorphic, but the result also holds for \qr maps and in fact for ramified coverings \cite[p.~68]{Milnor06}.
We shall only need the case that $c_1=c_2=1$. Then~\eqref{6a} simplifies to $r=d-1$.
\begin{proof}[Proof of Theorem \ref{T1k}]
By Theorem~\ref{T1h} it suffices to consider functions with pits effect.
Let $f$ be such a function and let $V_1,\dots,V_L$ and $G$ be as in section~\ref{F w p e}.
Thus $G\colon V_j\to B(0,M)$ is a proper map of degree $D_j$ and~\eqref{4a} holds.
Moreover, let also $W^{i,j}_s$ be as in section~\ref{F w p e}. Thus $W^{i,j}_s\subset V_i$
and $G\colon W^{i,j}_s\to V_j$ is a proper map for $1\le i,j \le L$ and
$1\le s\le t_{i,j}$.
We may assume that the $V_j$ and $W^{i,j}_s$ are connected, as this
is the case if $|G(c)|\neq M$ and $|G^2(c)|\neq M$ for all critical
points $c$ of~$G$, and hence can be achieved by perturbing $M$ slightly.
By the maximum principle, the $V_j$ and $W^{i,j}_s$ are in
fact simply connected. By the Riemann-Hurwitz Formula
(Lemma~\ref{lemma6a}), $W^{i,j}_s$ contains
$\deg(G\colon W_s^{i,j}\to V_j)-1$ critical points, counting multiplicities.

Let
\[Y_j=G^{-1}(V_j)\cap B(0,M)=\bigcup^L_{i=1}\bigcup^{t_{i,j}}_{s=1}W_s^{i,j}\]
and denote by $\mu_j$ the number of critical points of $G$ in $Y_j$. Then
\begin{align*}
\mu_j &=\sum^L_{i=1}\sum^{t_{i,j}}_{s=1}
\left(\deg\!\left(G\colon W^{i,j}_s\to V_j\right)-1\right)\\
  &=\sum^L_{i=1}\sum^{t_{i,j}}_{s=1}
\deg\!\left(G\colon W^{i,j}_s\to V_j\right)-\sum^L_{i=1}t_{i,j}
\\ &
   =D-\sum^L_{i=1}t_{i,j}
\end{align*}
by \eqref{4a0}. Hence
\[LD-\sum^L_{j=1}\sum^{L}_{i=1}t_{i,j}=\sum^L_{j=1}\mu_j\le D.\]
Defining $L_2$ and $V_1^2,\dots,V^2_{L_2}$ as in section~\ref{F w p e} we obtain
\[L_2=\sum^L_{j=1}\sum^{L}_{i=1}t_{i,j}\ge (L-1)D\ge D\]
and thus
\begin{equation}\label{6b} L_2>K_I(G)^2\ge K_I(G^2)
\end{equation}
by~\eqref{4a}. Recall that $G^2\colon V^2_j\to B(0,M)$ is a proper map and
$\overline{V}^2_j\subset B(0,M)$ for $1\le j\le L_2$. Similarly as before
we can now deduce from~\eqref{6b}, Lemma~\ref{lemma2d} and Lemma~\ref{lemma2f}
that $\capacity\overline{O^-_{G^2}(x)}>0$ for all $x\in B(0,M)$.
This implies that $\capacity\overline{O^-_f(x)}>0$ for all $x\in \R^2$; that is,
the conclusion of Theorem~\ref{T1i} holds.
Theorem~\ref{T5a} now implies that the conclusions (i)--(iv) of Theorem~\ref{T1f} holds.
Using part~(ii) of this theorem we conclude that $\capacity J(f)>0$.

We may apply this reasoning also to the iterates of $f$. Thus
$\capacity J(f^p)>0$ for all $p\in\N$.
Conclusion (v) of Theorem~\ref{T1f} now follows from Theorem~\ref{T5a1}.
\end{proof}

\section{Examples}\label{Examples}
\begin{example}\label{Ex7e}
In the iteration theory of transcendental entire functions, much attention
has been paid to the exponential functions $E_\lambda(z)=\lambda e^z$.
Here we only mention a result of Devaney and Krych~\cite[p.~50]{Devaney84}
saying that if $0<\lambda<1/e$, then
$J(E_\lambda)$ is
the complement
of the attracting basin of the attracting fixed point of~$E_\lambda$
and has the structure of a
{\em Cantor bouquet}.
By definition,
this is a union of uncountably many pairwise disjoint
simple curves connecting finite points in $\C$ (or $\R^n$)
with infinity.
For further results on exponential dynamics
we refer to a detailed survey by Devaney~\cite{Devaney}, as well as papers
by Rempe~\cite{Rempe06} and Schleicher~\cite{Schleicher03a}.

Zorich \cite{Zorich67} introduced
transcendental type quasiregular mappings that are \mbox{$3$-dimensional} analogues of the exponential function.
It was shown in~\cite{Bergweiler10a} that, for a suitable choice of
parameters, Zorich maps also have attracting fixed points whose attracting
basins are complements of Cantor bouquets.
Moreover, results of Karpi\'nska~\cite{Karpinska99} and McMullen~\cite{McMullen87}
concerning the Cantor bouquets of exponential functions
were extended to Zorich maps.
Here we show that -- in analogy to the result of Devaney and Krych --
 these $3$-dimensional  Cantor bouquets coincide
with the Julia set as defined in Definition~\ref{D1a}.

To define a Zorich map, we follow \cite[\S6.5.4]{Iwaniec01} and consider the square $Q = [-1,1]^2$ and the upper hemisphere
\[ U=\{x=(x_1,x_2,x_3)\in\R^3: |x|=1,\, x_3\ge0\}. \]
Let $h\colon Q\to U$ be a bi-Lipschitz map, and define
\[ F\colon Q\times\R\to\R^3, \qquad F(x_1,x_2,x_3)=e^{x_3}h(x_1,x_2). \]
Then $F$ maps the infinite square beam $Q\times\R$ to the upper half-space. Repeated reflection along sides of square beams and the $(x_1,x_2)$-plane yields a map ${F\colon \R^3\to\R^3}$. This map $F$ is quasiregular, omits the value $0$ and is doubly-periodic in the $x_1$- and $x_2$-directions. We call a function $F$ defined this way a {\em Zorich map} and we apply this term also to functions $f_a$ given by
\[ f_a\colon \R^3\to\R^3, \qquad f_a(x)=F(x)-(0,0,a). \]

For a Zorich map $f_a$ as above with parameter $a$ chosen sufficiently large,
it was proved in \cite{Bergweiler10a} that there exists a unique
attracting fixed point $\xi$ of $f_a$ such that the set
\[ J_0:= \R^3\backslash A(\xi) = \{ x\in\R^3:f_a^k\not\to\xi\} \]
is a Cantor bouquet.
As mentioned, we want to show that
\begin{equation}\label{7a}
J(f_a)=J_0.
\end{equation}
By Theorem~\ref{T1e} we have $J(f_a)\subset J_0$ so that we only have
to prove the reverse inclusion.

We will use the following notation. For $r=(r_1,r_2)\in\Z^2$, we put
\[ P(r)= P(r_1,r_2)=\{(x_1,x_2)\in\R^2 : |x_1-2r_1|<1, |x_2-2r_2|<1\} \]
so that $P(0,0)$ is the interior of $Q$. For $c\in\R$, we denote the half-space $\{(x_1,x_2,x_3)\in\R^3:x_3>c\}$ by $H_{>c}$, and we define $H_{\ge c}$, $H_{<c}$ similarly. Observe that $f_a$ maps $P(r_1,r_2)\times\R$ bijectively onto $H_{>-a}$ if $r_1+r_2$ is even and bijectively onto $H_{<-a}$ if $r_1+r_2$ is odd. Let $S=\{(r_1,r_2)\in\Z^2:r_1+r_2 \mbox{ is even}\}$.

In \cite{Bergweiler10a}, constants $M\in\R$ and $\alpha\in(0,1)$ are found such that, for any $r\in S$, there exists a branch of the inverse function of $f_a$,
\[ \Lambda^r\colon H_{\ge M}\to P(r)\times(M,\infty) =: T(r), \]
that satisfies \cite[(2.3)]{Bergweiler10a}
\begin{equation}\label{7b}
|\Lambda^r(x)-\Lambda^r(y)| \le \alpha|x-y| \quad \mbox{for } x,y\in H_{\ge M}.
\end{equation}
This estimate leads to the following uniform expansion property of $f_a$ on $\Lambda^r(H_{\ge M})$.
\begin{lemma}\label{lemma7a}
If $x\in \Lambda^r(H_{\ge M})$ for some $r\in S$ and if $R>0$, then
\[ f_a\left(B(x,R)\cap H_{\ge M}\right) \supset B\left(f_a(x),\alpha^{-1}R\right)\cap H_{\ge M}. \]
\end{lemma}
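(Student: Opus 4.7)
The plan is to exploit directly the contraction estimate \eqref{7b}: since $\Lambda^r$ is a contraction with constant $\alpha<1$, its inverse branch $f_a$ is automatically expanding with factor at least $\alpha^{-1}$ on the image $\Lambda^r(H_{\ge M})$. Concretely, to prove the inclusion I would fix an arbitrary point $z$ in the right-hand side $B(f_a(x),\alpha^{-1}R)\cap H_{\ge M}$ and produce a preimage of $z$ inside $B(x,R)\cap H_{\ge M}$; this is all that is needed.

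First I would set $y=f_a(x)$ and note that, because $x\in \Lambda^r(H_{\ge M})$ and $\Lambda^r$ is a branch of $f_a^{-1}$, we actually have $x=\Lambda^r(y)$ with $y\in H_{\ge M}$. Given $z\in B(y,\alpha^{-1}R)\cap H_{\ge M}$, the preimage candidate is $w:=\Lambda^r(z)$, which is well-defined since $z\in H_{\ge M}$ lies in the domain of $\Lambda^r$, and which automatically satisfies $f_a(w)=z$.

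Next I would check the two required conclusions for $w$. The containment $w\in H_{\ge M}$ is free because $\Lambda^r$ maps $H_{\ge M}$ into $T(r)\subset H_{>M}$. The distance bound then follows from \eqref{7b}:
\[
|w-x|=|\Lambda^r(z)-\Lambda^r(y)|\le \alpha |z-y|<\alpha\cdot\alpha^{-1}R=R,
\]
so $w\in B(x,R)\cap H_{\ge M}$ as desired. This shows $z\in f_a(B(x,R)\cap H_{\ge M})$, and since $z$ was arbitrary the claimed inclusion follows.

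There is no real obstacle here: the whole argument is a textbook contraction-mapping implies expansion-of-inverse observation, with the only subtlety being the bookkeeping that $x$ equals $\Lambda^r(f_a(x))$ (which uses that $\Lambda^r$ was chosen as a specific inverse branch rather than just some quasiregular inverse) and that $\Lambda^r(z)$ remains in $H_{\ge M}$ automatically, so no further verification that the preimage lies on the correct side of the hyperplane $\{x_3=M\}$ is required.
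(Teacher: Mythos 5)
Your proof is correct and is essentially the paper's own argument: both fix a point in $B(f_a(x),\alpha^{-1}R)\cap H_{\ge M}$, apply the inverse branch $\Lambda^r$, and use the contraction estimate \eqref{7b} together with $x=\Lambda^r(f_a(x))$ to place the preimage in $B(x,R)\cap H_{\ge M}$. The only cosmetic difference is that you spell out explicitly that $\Lambda^r(z)\in T(r)\subset H_{\ge M}$, which the paper notes in passing.
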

\begin{proof}
Take $y\in B\left(f_a(x),\alpha^{-1}R\right)\cap H_{\ge M}$. Then by \eqref{7b}
\[ |x-\Lambda^r(y)| = |\Lambda^r(f_a(x))-\Lambda^r(y)|
\le \alpha|f_a(x)-y|<R. \]
Hence $\Lambda^r(y)\in B(x,R)\cap T(r)$ and so $y=f_a(\Lambda^r(y))\in f_a(B(x,R)\cap H_{\ge M})$.
\end{proof}
We shall also require the fact that \cite[p. 608]{Bergweiler10a}
\begin{equation}\label{7c}
J_0 \subset\bigcup_{r\in S}T(r).
\end{equation}

We are now ready to establish \eqref{7a} by showing that $J_0\subset J(f_a)$. To this end, take $x_0\in J_0$ and let $U$ be a neighbourhood of $x_0$. Write $x_k=(x_{k,1},x_{k,2},x_{k,3}):=f_a^k(x_0)$ and note that $x_k\in\bigcup_{r\in S}T(r)\subset H_{\ge M}$, for all $k\ge0$, due to \eqref{7c} and the complete invariance of $J_0$. It follows that we may repeatedly apply Lemma~\ref{lemma7a} to obtain a sequence $R_k\to\infty$ such that
\[ f_a^k(U)\supset B(x_k,R_k)\cap H_{\ge M}. \]
Provided that $k$ is large, we can find $(p_{k,1},p_{k,2})\in\Z^2$ such that the set
\[ V_k=\{(y_1,y_2)\in\R^2:|y_1-2p_{k,1}|\le2, |y_2-2p_{k,2}|\le2\} \times [x_{k,3}, x_{k,3}+R_k/2] \]
is contained in $B(x_k,R_k)\cap H_{\ge M}$.
Note that $f_a$ maps $V_k$ onto the shell
\[ A_k=\{y\in\R^3: e^{x_{k,3}}\le|y+(0,0,a)|\le e^{x_{k,3}+R_k/2} \} \]
and therefore we have that $A_k\subset f_a^{k+1}(U)$. Since $x_{k,3}\ge M$ and $R_k\to\infty$, it is not difficult to see that for large $k$ we can find $(q_{k,1},q_{k,2})\in\Z^2$ and $t_k>0$ such that $t_k\to\infty$ and
\[ \{(y_1,y_2,y_3)\in\R^3:|y_1-2q_{k,1}|\le2, |y_2-2q_{k,2}|\le2, |y_3|\le t_k\} \subset A_k. \]
By considering the image of this set, we now deduce that
\[ \{y\in\R^3: e^{-t_k}\le|y+(0,0,a)|\le e^{t_k} \} \subset f_a(A_k) \subset f_a^{k+2}(U). \]
This implies that $O^+_{f_a}(U)=\R^3\backslash\{(0,0,-a)\}$ and thus $x_0\in J(f_a)$, completing the proof of \eqref{7a}.
\end{example}
\begin{example}\label{Ex7d}
Quasiregular mappings of $\R^n$ that can be seen as analogues of the trigonometric functions were constructed in \cite{Bergweiler11} as follows. Write ${x=(x_1,\ldots,x_n)}$ for points in $\R^n$. Let $F$ be a bi-Lipschitz map from the half-cube $[-1,1]^{n-1}\times[0,1]$ to the upper half-ball
\[ \{x\in\R^n : |x|\le1, x_n\ge0\} \]
which maps the face $[-1,1]^{n-1}\times\{1\}$ onto the hemisphere
\[ \{x\in\R^n:|x|=1,x_n\ge0\}.\]
Extend $F$ to a mapping $F\colon [-1,1]^{n-1}\times[0,\infty)\to\{x\in\R^n:x_n\ge0\}$ by
\[ F(x) = e^{x_n-1}F(x_1,\ldots,x_{n-1},1), \quad \mbox{for } x_n>1. \]
Then $F$ bijectively maps a half-infinite square beam onto the upper half-space. Using repeated reflections in hyperplanes, $F$ is extended to give a quasiregular self-map of $\R^n$; see \cite{Bergweiler11} for more details. This construction quickly leads to the fact that, for large enough $\lambda>0$, the map $f:=\lambda F$ is locally uniformly expanding. Choosing $F$ so that it fixes the origin, and taking $\lambda$ sufficiently large, this expansion property was used in \cite{FletcherSine} to prove that $O^+_f(U)=\R^n$ for all non-empty open subsets $U\subset\R^n$. Thus $J(f)=\R^n$. Furthermore, the periodic points of $f$ were shown to form a dense subset of $\R^n$.
\end{example}

\begin{example}\label{Ex7a}
Define $g\colon \C\to\C$ by $g(z)=z+1+e^{-z}$ and, for a large positive constant $M$, let
\[ f(z) = \begin{cases} g(z) &\mbox{if } \Real z\le M \mbox{ or } \Real z\ge 2M, \\
g(z)+(1+e^{-z})\sin\left(\dfrac{\pi\Real z}{M}\right) &\mbox{if } M<\Real z <2M. \end{cases} \]
It is easy to see that $f$ is quasiregular of transcendental type if $M$ is large. In fact, $K(f)\to1$ as $M\to\infty$.

The function $g$ is a classical example considered by Fatou \cite[Exemple I]{Fatou26} who showed that with the right half-plane $H=\{z:\Real z>0\}$ we have $g(H)\subset H$ and $g^k|_H\to\infty$ as $k\to\infty$. We also have $f(H)\subset H$ which implies that $J(f)\cap H=\emptyset$. With $\xi=3M/2$, we have $f(\xi)=\xi$ and $f(x)>x$ for $x>\xi$. Thus $f^k(x)\to\infty$ as $k\to\infty$ for $x>\xi$. We conclude that $(\xi,\infty)\subset I(f)$, while $\xi\in BO(f)$. Hence $\xi\in(\partial BO(f) \cap \partial I(f))\backslash J(f)$.
\end{example}

\begin{example}\label{Ex7b}
The quasiregular map $\tilde{f}\colon \C\to\C$ constructed in \cite[\S 4]{Nicks} is of transcendental type and has the following properties.

The upper half-plane $H_+=\{z:\Imag z>0\}$ is mapped into itself by $\tilde{f}$ and hence $J(\tilde{f})\cap H_+ = \emptyset$.
There is a sequence of domains $(W_k)_{k\in\Z}$ with closures in $H_+$ such that $\tilde{f}(W_k)=W_{k+1}$ and $\tilde{f}(z)=z/2$ on each ${W_k}$. Therefore, the iterates of $\tilde{f}$ converge to $0$ on ${W_k}$ and so ${W_k}\subset BO(\tilde{f})$ for $k\in\Z$. In contrast, all points of $H_+$ that are not contained in some $\overline{W_k}$ escape to infinity under iteration; that is,
\[ H_+\backslash\bigcup_{k\in\Z}\overline{W_k}\subset I(\tilde{f}). \]
 It can then be shown that $\partial W_k\subset(\partial BO(\tilde{f}) \cap \partial I(\tilde{f}))\backslash J(\tilde{f})$ for each $k\in\Z$.
\end{example}

\begin{example}\label{Ex7c}
Let $\delta>0$, define $g\colon [0,3]\to\R$ by
\[ g(r) = \begin{cases} (1-\delta) &\mbox{if }0\le r\le1, \\
1+\delta(r-2) &\mbox{if }1<r\le2,\\
1 &\mbox{if }2<r\le3, \end{cases} \]
and define $f\colon \C\to\C$ by
\[ f(z) = \begin{cases} zg(|z|) &\mbox{if }|z|\le3, \\
z+\delta(|z|-3)e^z &\mbox{if }3<|z|\le4,\\
z+\delta e^z &\mbox{if }4<|z|, \end{cases} \]
(cf.~\cite[Example~5.3]{Bergweiler10}). If $\delta$ is small enough then $f$ is quasiregular of transcendental type, and in fact $K(f)\to1$ as $\delta\to0$.

The function $f$ has an attracting fixed point at $0$ and it is easily seen that $B(0,2)\subset A(0)$ and $S(0,2)\subset \partial A(0)$. On the other hand, $f(B(0,3))=B(0,3)$ and thus $S(0,2)\cap J(f)=\emptyset$. We conclude that $\partial A(0)\not\subset J(f)$.
\end{example}

\section{Some consequences of Harnack's inequality}\label{Harnack}
We show that in the condition $|f(x)|\leq 1$ appearing in the
definition of the pits effect (Definition~\ref{D1a}) the constant $1$
can be replaced by any other positive constant.
In fact, we have the following result.
\begin{theorem}\label{T8a}
Let $f\colon \R^n\to \R^n$ be a \qr map of transcendental type
having the pits effect and let $\alpha>1$.
Then there exists $N\in \N$ such that for all $\alpha>1$,
all $c>1$ and
all $\varepsilon>0$ there exists $R_0$ such that if $R>R_0$, then
\[
\left\{x\in \R^n:R\le |x|\le cR,\ |f(x)|\le R^\alpha\right\}
\]
can be covered by $N$ balls of radius $\varepsilon R$.
\end{theorem}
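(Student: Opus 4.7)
The plan is to argue by contradiction using Harnack's inequality for the $\mathcal{A}$-harmonic function $\log|f|$, which applies on subdomains where $f$ does not vanish. Let $N$ be the constant from the pits effect of $f$. Given $\alpha>1$, $c>1$ and $\varepsilon>0$, I would first apply the pits effect with parameters $c'=2c$ and $\varepsilon'$ much smaller than $\varepsilon$, obtaining for all large $R$ a covering of the genuine pit set $\{R/2\le|x|\le 2cR,\ |f(x)|\le 1\}$ by $N$ balls $B_i=B(z_i,\varepsilon' R)$. The enlarged balls $\tilde B_i=B(z_i,\varepsilon R)$ will be the candidates for covering the $R^\alpha$-set.

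Assume for contradiction that some $x_0\in\{R\le|x|\le cR\}$ satisfies $|f(x_0)|\le R^\alpha$ and $x_0\notin\bigcup\tilde B_i$. Since $\varepsilon'\le\varepsilon/2$, the ball $B(x_0,\varepsilon R/2)$ is disjoint from $\bigcup B_i$, so $|f|>1$ there. More globally, the domain $D_R=\{R/2<|x|<2cR\}\setminus\bigcup_i\overline{B_i}$ satisfies $|f|>1$ everywhere, hence $f$ is non-vanishing on $D_R$ and $u=\log|f|$ is a positive $\mathcal{A}$-harmonic function with structure constants depending only on $K(f)$ and $n$. Because $D_R$ is an annulus with $N$ balls of radius $\varepsilon' R$ removed, any two points of $D_R$ can be joined by a chain of Harnack balls of length $L=L(\varepsilon,c,N,n)$ independent of $R$; therefore $u(p)\le C^L u(q)$ for all $p,q\in D_R$, with $C$ the Harnack constant.

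To derive the contradiction I would locate $y_0\in D_R$ with $u(y_0)>\alpha C^L\log R$. By Lemma~\ref{lemma2a} we have $\log M(r,f)/\log r\to\infty$, so a maximum modulus point of $f$ on some sphere $S^{n-1}(r)$ with $r\in[R,cR]$ satisfies $u(y_0)\ge\log M(R,f)$, which exceeds $\alpha C^L\log R$ when $R$ is large. Combined with $u(x_0)\le\alpha\log R$, the Harnack chain estimate $u(y_0)\le C^L u(x_0)$ then fails, giving the required contradiction and showing that the $\tilde B_i$ cover $\{R\le|x|\le cR,\ |f(x)|\le R^\alpha\}$.

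The main obstacle is to guarantee that the maximum modulus point $y_0$ can be taken inside $D_R$ and not inside a pit ball. For this I would exploit that the $N$ pit balls of radius $\varepsilon' R$ meet each sphere $S^{n-1}(r)$ in at most $N$ caps of angular measure $O(N\varepsilon')$, so most of each sphere lies in $D_R$; by varying $r\in[R,cR]$ and, in the case where the sphere-maximum falls in a pit ball, applying Harnack's inequality on a ball just outside the pit, the sphere-maximum can be transferred to a point of $D_R$ at the cost of a bounded multiplicative loss in $\log|f|$. Absorbing this loss into the factor $C^L$ does not affect the contradiction, so the pits effect with enlarged threshold $R^\alpha$ holds with the same constant $N$.
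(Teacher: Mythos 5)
Your overall strategy is the one the paper uses: assume the covering fails at threshold $R^\alpha$, cover the genuine pit set $\{x: R/2\le|x|\le 2cR,\ |f(x)|\le 1\}$ by $N$ balls of radius $\varepsilon'R$ with $\varepsilon'$ small, observe that $\log|f|>0$ on the complementary region $D_R$, compare values there by Harnack's inequality with constants independent of $R$ (the paper does this after rescaling to $h_m(x)=f(R_mx)$, applying Lemma~\ref{lemma8b} to a fixed compact subset of a fixed domain, which is your chain argument in normalized coordinates), and then contradict Lemma~\ref{lemma2a}. However, the step you yourself single out as the main obstacle is not correctly resolved. The remark that the pit balls cut each sphere of radius $r$ only in small caps does not prevent the maximum of $|f|$ over that sphere from being attained inside such a cap; and your fallback, ``applying Harnack's inequality on a ball just outside the pit'', cannot transfer the value $\log M(r,f)$ from a point $y^*$ inside a pit ball to a point of $D_R$: Lemma~\ref{lemma8a} compares values of $\log|f|$ only within balls on whose doubles $|f|>1$, which fails inside the pit ball, and an application of Harnack on a ball disjoint from the pit never involves the value at $y^*$ at all. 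So, as written, the key lower bound $u(y_0)\ge\log M(R,f)$ (even up to a bounded factor) for some admissible $y_0\in D_R$ is not established.

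The repair is exactly what the paper does, in rescaled form: since there are only $N$ pit balls of radius $\varepsilon'R$, their radial shadows occupy a set of radii of measure at most $2N\varepsilon'R<(c-1)R$ once $\varepsilon'$ is small, so one can choose $r\in[R,cR]$ with $\bigl||z_i|-r\bigr|\ge 2\varepsilon'R$ for every $i$; then the whole sphere of radius $r$ lies in $D_R$ at distance at least $\varepsilon'R$ from the pit balls, and a maximum point of $|f|$ on it, where $\log|f|\ge\log M(R,f)$, is automatically a valid $y_0$. (If one insists on rescuing a maximum that sits inside a pit, the correct tool is the maximum principle applied to $\overline{B}(z_i,2\varepsilon'R)$, not Harnack.) In the paper this is the choice of $t\in[1,c]$ with the sphere $S(0,t)$ contained in the compact set $C$ avoiding the balls $B(x_j,5N\delta)$, to which Lemma~\ref{lemma8b} is then applied. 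A smaller point: your claim that \emph{any} two points of $D_R$ can be joined by Harnack chains of length bounded independently of $R$ is false for points close to the removed balls or to the boundary of the annulus; it is true, and suffices, for the specific points you use, since $x_0$ lies at distance at least $\varepsilon R/2$ from the pit balls and $y_0$ can be chosen with the margin above --- this is what the paper's connectivity discussion (restriction to the component $\Omega'$ and the compact set $C$) takes care of.
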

The following result is known as Harnack's inequality; see \cite[Theorem VI.7.4, Corollary VI.2.8]{Rickman93}.
\begin{lemma}\label{lemma8a}
For $K\geq 1$ and $n\geq 2$ there exists a constant $\theta>1$ such that
if $ f\colon B^n(a,2r)\to \R^n$ is $K$-quasiregular and $|f(x)|>1$ for all
$x\in B^n(a,2r)$, then
\[
\sup_{x\in B^n(a,r)}\log |f(x)|\le \theta\inf_{x\in B^n(a,r)}\log |f(x)|.
\]
\end{lemma}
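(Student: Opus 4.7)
The plan is to bootstrap the pits-effect covering from the bound $|f|\le 1$ to the weaker hypothesis $|f|\le R^\alpha$ by combining Harnack's inequality (Lemma~\ref{lemma8a}) with the transcendental growth provided by Lemma~\ref{lemma2a}. Let $N$ denote the pits-effect constant of $f$ and let $\theta=\theta(n,K(f))$ be the Harnack constant. Given $\alpha>1$, $c>1$ and $\varepsilon>0$, I first apply the pits effect on a slightly enlarged annulus $\{R/2\le|y|\le (c+1)R\}$ with a much smaller radius parameter $\varepsilon'=\varepsilon/200$, producing $N$ pit balls $B(c_1,\varepsilon' R),\dots,B(c_N,\varepsilon' R)$ that cover the pit set $\{R/2\le|y|\le(c+1)R:\,|f(y)|\le 1\}$ for all sufficiently large~$R$. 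My claim is that these same $N$ centers, with the enlarged radius $\varepsilon R$, cover the target set.

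I argue by contradiction. Suppose $x$ satisfies $R\le|x|\le cR$, $|f(x)|\le R^\alpha$, but $|x-c_i|>\varepsilon R$ for every $i$. Since $\varepsilon'\ll\varepsilon$, the ball $B(x,\varepsilon R/10)$ lies in the enlarged annulus and is disjoint from every pit ball, so $|f|>1$ there. The next step is to produce a reference point $y^*$ where $|f|$ is enormously large and then transport this bound back to $x$ along a Harnack chain. By Lemma~\ref{lemma2a}, for any constant $\beta>0$ one has $M(c''R,f)>R^\beta$ for $R$ large, uniformly in $c''\in[c,c+1]$. A measure-theoretic argument then picks $c''\in[c,c+1]$ whose sphere $S(0,c''R)$ lies at distance at least $\varepsilon R/5$ from every pit center (each of the $N$ centers eliminates only an interval of length $O(\varepsilon)$ of allowable $c''$-values, so such a $c''$ exists once $\varepsilon$ is small relative to $1/N$). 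I then choose $y^*\in S(0,c''R)$ with $|f(y^*)|=M(c''R,f)>R^\beta$, using the maximum principle for quasiregular maps.

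The crux is the Harnack chain: a sequence of overlapping balls $B(a_1,2r_0),\dots,B(a_m,2r_0)$ of radius $2r_0=\varepsilon R/10$ with $a_1=y^*$, $a_m=x$, consecutive centers within distance $r_0$, and each $B(a_i,2r_0)$ disjoint from every pit ball and contained in the enlarged annulus. Both $y^*$ and $x$ lie at distance at least $2r_0+\varepsilon' R$ from every pit center, and removing $N$ balls of radius $O(\varepsilon R)$ from an annulus of diameter comparable to $R$ leaves a path-connected region of diameter $O(R)$, so such a chain exists with $m\le 2(c+1)R/r_0=40(c+1)/\varepsilon$, a constant independent of $R$. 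Applying Lemma~\ref{lemma8a} on each ball yields $\log|f(p)|\le\theta\log|f(q)|$ for any $p,q\in B(a_i,r_0)$, and iterating along the chain gives $\log|f(y^*)|\le\theta^m\log|f(x)|$. Choosing $\beta=\theta^m\alpha+1$, a constant depending only on $\alpha,c,\varepsilon,N,\theta$, we get $\log|f(x)|\ge\theta^{-m}\beta\log R>\alpha\log R$, contradicting $|f(x)|\le R^\alpha$.

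The main obstacle I expect is justifying path-connectedness of the safe region together with a chain length independent of $R$: this forces the auxiliary constants (the enlargement factor of the annulus, the parameter $\varepsilon'$, the chain radius $r_0$) to be chosen carefully in terms of $\varepsilon$ and $N$, and requires an initial reduction to the case of small $\varepsilon$ (for larger $\varepsilon$ the conclusion follows trivially from the small-$\varepsilon$ case by enlarging the covering radius). A secondary technical point is verifying that the maximum of $|f|$ on $\overline{B}(c''R)$ is attained on the sphere $S(0,c''R)$, which holds for quasiregular maps via subharmonicity of $|f|^p$.
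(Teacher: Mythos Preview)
Your proposal does not prove the stated lemma. Lemma~\ref{lemma8a} is Harnack's inequality for $K$-quasiregular maps on a ball; in the paper it is not proved but simply quoted from Rickman's book \cite[Theorem~VI.7.4, Corollary~VI.2.8]{Rickman93}. What you have written is instead an argument for Theorem~\ref{T8a}, the result that the bound $|f(x)|\le 1$ in the definition of the pits effect may be replaced by $|f(x)|\le R^\alpha$. Your text even says so in its first sentence and explicitly \emph{uses} Lemma~\ref{lemma8a} as an input. So as a proof of the statement in question there is nothing to assess: the proposal addresses a different theorem.

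If one reads your proposal as an attempt at Theorem~\ref{T8a}, then your approach and the paper's are close in spirit but organized differently. The paper rescales via $h_m(x)=f(R_m x)$, passes to a subsequence so that the pit centers converge, and then applies the domain form of Harnack (Lemma~\ref{lemma8b}) once to a fixed compact set $C$ inside the annulus with the limiting pit balls removed; the constant $\beta$ is obtained in one stroke and the growth estimate of Lemma~\ref{lemma2a} finishes the argument. You instead work at a fixed large $R$, build an explicit Harnack chain of balls of radius $\varepsilon R/20$ from a maximum point $y^*$ to the offending point $x$, and iterate Lemma~\ref{lemma8a} along the chain; this is essentially an inline proof of Lemma~\ref{lemma8b} for the particular domain at hand. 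The paper's route is cleaner because the subsequence trick freezes the geometry and lets a single compactness argument replace your careful bookkeeping of chain length, path-connectedness of the ``safe'' region, and the choice of~$c''$. Your version is workable but, as you yourself note, the connectivity and uniform chain-length bounds need care (for instance, the inequality $m\le 40(c+1)/\varepsilon$ tacitly assumes a path of length $O(R)$ exists in the safe region, which requires the kind of sphere-selection argument the paper handles via its choice of~$t$).
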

A simple consequence is the following result.
\begin{lemma}\label{lemma8b}
Let $\Omega\subset \R^n$ be a domain, $C$ a compact subset
of $\Omega$ and $K\geq 1$. Then there exists $\beta>1$ such that
if $ f\colon \Omega \to \R^n$ is $K$-\qr and $|f(x)|>1$ for all
$x\in \Omega$, then
\[
\max_{x\in C}\log |f(x)|\le \beta\min_{x\in C}\log |f(x)|.
\]
\end{lemma}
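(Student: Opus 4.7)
The plan is to reduce the statement to Harnack's inequality (Lemma~\ref{lemma8a}) by a standard chaining argument. Since $\Omega$ is a domain and $C\subset \Omega$ is compact, I would first replace $C$ by a compact connected set $\widetilde{C}$ with $C\subset \widetilde{C}\subset \Omega$; such $\widetilde{C}$ exists because we may take a connected open relatively compact set $\Omega'$ with $C\subset \Omega'\subset \overline{\Omega'}\subset \Omega$ (join finitely many components of a suitable open neighbourhood of $C$ by arcs in $\Omega$) and set $\widetilde{C}=\overline{\Omega'}$.

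Next I would choose $r>0$ small enough that $B(x,2r)\subset \Omega$ for every $x\in \widetilde{C}$; this is possible because $\dist(\widetilde{C},\R^n\backslash\Omega)>0$. By compactness, $\widetilde{C}$ can be covered by finitely many balls $B_1=B(a_1,r),\dots,B_N=B(a_N,r)$ with $a_i\in \widetilde{C}$. The number $N$ depends only on $\Omega$ and $C$, not on~$f$. Because $\widetilde{C}$ is connected, for any two points $x,y\in C\subset \widetilde{C}$ there is a chain $B_{i_0},B_{i_1},\dots,B_{i_k}$ of these balls with $x\in B_{i_0}$, $y\in B_{i_k}$ and $B_{i_{j-1}}\cap B_{i_j}\cap \widetilde{C}\neq\emptyset$ for $1\le j\le k$; moreover $k$ can be taken to be at most $N$ since we may assume the indices $i_j$ are pairwise distinct.

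Now, by Lemma~\ref{lemma8a} applied on each $B(a_i,2r)\subset\Omega$, whenever $f$ is $K$-quasiregular on $\Omega$ with $|f|>1$ everywhere, we have
\[
\sup_{B_i}\log|f|\le \theta \inf_{B_i}\log|f|
\]
for a constant $\theta=\theta(n,K)>1$. In particular, $\log|f|$ is a positive function on $\widetilde{C}$ which on each $B_i$ satisfies $\log|f(u)|\le \theta\log|f(v)|$ for all $u,v\in B_i$. Picking for each step of the chain a point $z_j\in B_{i_{j-1}}\cap B_{i_j}$, we iterate:
\[
\log|f(x)|\le \theta\log|f(z_1)|\le \theta^2\log|f(z_2)|\le\dots\le \theta^{k+1}\log|f(y)|.
\]
Since $k\le N$, the constant $\beta=\theta^{N+1}$ works.

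The only non-routine step is the reduction to a connected compact set and the uniform bound on the chain length; once that is in place, Harnack's inequality does all the work. I do not expect any real obstacle here, but one must be a little careful if $\Omega=\R^n$, where the choice of $r$ is free, and in checking that the number of balls in the cover (and hence~$N$) can indeed be chosen independently of~$f$, which is immediate from the fact that the cover is determined purely by $\Omega$ and $C$.
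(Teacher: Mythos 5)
Your chaining argument via Lemma~\ref{lemma8a} is correct and is precisely the ``simple consequence'' the paper has in mind (it gives no further details), with the constant $\beta$ depending only on $n$, $K$, $\Omega$ and $C$ as required. The only slight overstatement is the claim that consecutive balls in the chain can be made to intersect within $\widetilde{C}$; this is neither needed (you only use a point $z_j\in B_{i_{j-1}}\cap B_{i_j}\subset\Omega$, where $\log|f|>0$) nor does it affect the bound $\beta=\theta^{N+1}$.
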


\begin{proof}[Proof of Theorem \ref{T8a}]
Let $N$ be as in Definition~\ref{D1b} (i.e., the definition of the pits effect)
and let $c,\alpha>1$ and $\varepsilon>0$.
Suppose that the conclusion is false. Then
there exists a sequence $(R_m)$ tending to infinity such that
\begin{equation}\label{8a}
\{x\in \R^n:R_m\le |x|\le cR_m,\ |f(x)|\le R_m^\alpha\}
\end{equation}
cannot be covered by $N$ balls of radius $\varepsilon R_m$.
With $h_m(x)=f(R_m x)$ as in~\eqref{3a} this means that
\[
\left\{x\in \R^n:1\le |x|\le c,\ |h_m(x)|\le R_m^\alpha\right\}
\]
cannot be covered by $N$ balls of radius $\varepsilon$.
On the other hand, for any $\delta>0$ there exists $R_0$ such that if $R> R_0$, then
$\{x\in \R^n: R/2\le |x|\le 2cR, |f(x)|\le 1\}$
 can be covered by $N$ balls of radius $\delta R$.
In terms of $h_m$ this means that if $R_m> R_0$, then
$\{x\in \R^n: 1/2\le |x|\le 2c, |h_m(x)|\le 1\}$
can be covered by  $N$ balls of radius~$\delta$, say
\[
\left\{x\in \R^n: \frac12\le |x|\le 2c, |h_m(x)|\le 1\right\}
\subset\bigcup_{j=1}^N B(x_{m,j},\delta).
\]
We may assume that $(x_{m,j})_{m\in\N}$ converges for all
$j\in\{1,\dots,N\}$, say
$x_{m,j}\to x_j$ as $m\to\infty$.
Then
\[
\left\{x\in \R^n: \frac12\le |x|\le 2c, |h_m(x)|\le 1\right\}
\subset\bigcup_{j=1}^N B(x_{j},2\delta)
\]
for large~$m$.
We would like to apply Lemma~\ref{lemma8b} to
$$\Omega= \left\{x\in \R^n:\frac12<|x|<2c \right\}\backslash\bigcup_{j=1}^N
\overline{B}(x_j,2\delta)$$
and the subset
$$C= \left\{x\in \R^n:1\leq |x|\leq c \right\}\backslash
\bigcup_{j=1}^N B(x_j,5N\delta).$$
However, the set $\Omega$ defined this way need not be connected.
But choosing $\delta$ sufficiently small we can achieve that
there exists $t\in[1,c]$ such that the sphere $S(0,t)$
is contained in $C$
while all components of $\Omega$  that do not contain
$S(0,t)$ have diameter less than $4N\delta$.
Let $\Omega'$ be the component of $\Omega$ containing~$S(0,t)$.
Then
$C$
is a compact subset of $\Omega'$ and thus
Lemma~\ref{lemma8b} can be applied to $\Omega'$ and~$C$.
We obtain
\[
\beta \min_{x\in C}\log |h_m(x)|\ge
\max_{x\in C}\log |h_m(x)|\ge\log M(t,h_m)=
\log M(R_mt,f).
\]
Lemma~\ref{lemma2a} yields
\[
\min_{x\in C}\log |h_m(x)|> \alpha\log R_m
\]
for large $m$, which means that
\[
|f(x)|> R_m^\alpha \quad\text{for} \ R_m\leq |x|\leq cR_m,\  x\notin
\bigcup_{j=1}^N B(R_mx_j,5N\delta R_m).
\]
As we may assume that $5N\delta<\varepsilon$, we see that the set given
by~\eqref{8a} can be covered by $N$ balls of radius $\varepsilon R_m$.
This is a contradiction.
\end{proof}

\begin{remark}\label{R8a}
At the beginning of section~\ref{F w p e} we remarked that the functions
$g_m$ defined by~\eqref{3b} form a quasinormal family if $f$ has the
pits effect.
In order to see this, we note that by Theorem~\ref{T8a} we can  cover
\[
\left\{x\in \R^n:R_m\le |x|\le cR_m,\ |f(x)|\le R_m^\alpha\right\}
\]
by $N$ balls of radius $\varepsilon R_m$, provided $m$ is large.
Similarly, we can cover
\[
\left\{x\in \R^n:R_m/c\le |x|\le R_m,\ |f(x)|\le R_m^\alpha/c^\alpha\right\}
\]
by $N$ such balls. Choosing $c=1/\varepsilon$ we can thus cover
\[
B(0,1/\varepsilon)\backslash
\left\{x\in \R^n: |g_m(x)|\le \varepsilon^\alpha R_m^{\alpha-1}\right\}
\]
by $2N+1$ balls of radius $\varepsilon$, provided $m$ is large.
As this holds for every $\varepsilon>0$, and since $\alpha>1$,
we easily see that every subsequence of $(g_m)$ has a subsequence
which converges to $\infty$ in $\R^n\backslash E$ for some set
$E$ of cardinality at most $2N+1$.
Thus $(g_m)$ is quasinormal.
\end{remark}


\begin{thebibliography}{99}
\bibitem{Baker75}
I.\ N.\ Baker,
The domains of normality of an entire function.
{\em Ann.\ Acad.\ Sci.\ Fenn.\ (Ser.\ A, I.\ Math.)} 1 (1975), 277--283.

\bibitem{Bergweiler93}
Walter Bergweiler,
Iteration of meromorphic functions.
{\em Bull.\ Amer.\ Math.\ Soc.\ (N.\ S.)} 29 (1993), 151--188.
\bibitem{Bergweiler06}
Walter Bergweiler,
Fixed points of composite entire and quasiregular maps.
{\em Ann.\ Acad.\ Sci.\ Fenn.\ Math.}
31 (2006), 523--540.
\bibitem{Bergweiler10}
Walter Bergweiler,
Iteration of quasi\-regular mappings.
{\em Comput.\ Methods Funct.\ Theory} 10 (2010),
455--481.
\bibitem{Bergweiler10a}
Walter Bergweiler,
Karpi\'nska's paradox in dimension~$3$.
{\em Duke Math.\ J.} 154 (2010), 599--630.
\bibitem{Bergweiler12}
Walter Bergweiler,
On the set where the iterates of an entire function are bounded.
{\em Proc. Amer. Math. Soc.}
140 (2012), 847--853.
\bibitem{FJ}
Walter Bergweiler,
Fatou-Julia theory for non-uniformly quasiregular maps.
{\em Ergodic Theory Dy\-nam.\ Systems}.
Available on CJO doi:10.1017/S0143385711000915.
\bibitem{Bergweiler11}
Walter Bergweiler and Alexandre Eremenko,
Dynamics of a higher dimensional analogue of the trigonometric functions.
{\em Ann.\ Acad.\ Sci.\ Fenn.\ Math.}
36 (2011), 165--175.
\bibitem{Bergweiler09}
Walter Bergweiler, Alastair Fletcher, Jim Langley and Janis Meyer,
The escaping set of a quasi\-regular mapping.
{\em Proc.\ Amer.\ Math.\ Soc.}
137  (2009), 641--651.
\bibitem{Devaney}
Robert L.\ Devaney,
Complex exponential dynamics.
In ``{\em Handbook of Dynamical Systems, Vol.} 3'',
Elsevier, Amsterdam, 2010, 125--224.
\bibitem{Devaney84}
Robert L.\ Devaney and Michai{\l} Krych,
Dynamics of exp(z).
{\em Ergodic Theory Dynam.\ Systems} 4 (1984), 35--52.
\bibitem{Eremenko89}
A.\ \`E.\ Erem\"enko,
On the iteration of entire functions.
In ``{\em Dynamical Systems and Ergodic Theory}''.
Banach Center Publications~23.
Polish Scientific Publishers, Warsaw, 1989, 339--345.
\bibitem{Eremenko07}
Alexandre Eremenko and Iossif Ostrovskii,
On the `pits effect' of Littlewood and Offord.
{\em Bull. Lond. Math. Soc.} 39 (2007), 929--939.
\bibitem{Fatou19} P.\ Fatou,
Sur les \'equations fonctionelles.
{\em Bull.\ Soc.\ Math.\ France} 47 (1919), 161--271;
48 (1920), 33--94, 208--314.
\bibitem{Fatou26} P.\ Fatou,
Sur l'it\'eration des fonctions transcendantes enti\`eres.
{\em Acta Math.} 47 (1926), 337--360.
\bibitem{Fletcher11}
Alastair Fletcher and Daniel A.\ Nicks,
Quasiregular dynamics on the $n$-sphere.
{\em Ergodic Theory Dy\-nam.\ Systems} 31 (2011), 23--31.
\bibitem{Fletcher11a}
Alastair Fletcher and Daniel A.\ Nicks,
Julia sets of uniformly quasi\-regular mappings are uniformly perfect.
{\em Math. Proc. Cambridge Philos. Soc.} 151 (2011), 541--550.
\bibitem{FletcherSine}
Alastair Fletcher and Daniel A.\ Nicks,
Chaotic dynamics of a quasi\-regular sine mapping.
Preprint, arXiv:1208.3585.
\bibitem{Iwaniec01}
Tadeusz Iwaniec and Gaven J.\ Martin,
{\em Geometric Function Theory and Non-linear Analysis}.
Oxford Mathematical Monographs.
Oxford University Press, New York, 2001.
\bibitem{Julia18} G.\ Julia,
Sur l'it\'eration des fonctions rationelles.
{\em J.\ Math.\ Pures Appl.} (7) 4 (1918), 47--245.
\bibitem{Karpinska99}
Bogus\l awa Karpi\'nska, Hausdorff dimension of the hairs without endpoints for
$\lambda \exp z$.
{\em C.~R.\ Acad.\ Sci.\ Paris Ser.\ I Math.} 328 (1999), 1039--1044.
\bibitem{Littlewood48}
J. E. Littlewood and A. C. Offord,
On the distribution of zeros and
$a$-values of a random integral function. II.
{\em Ann. of Math.} (2) 49 (1948) 885--952; errata 50 (1949), 990--991.
\bibitem{Mattila79}
P. Mattila and S. Rickman,
Averages of the counting function of a quasi\-regular mapping.
{\em Acta Math.} 143 (1979), 273--305.
\bibitem{McMullen87}
Curt McMullen, Area and Hausdorff dimension of Julia sets of entire functions.
{\em Trans.\ Amer.\ Math.\ Soc.} 300 (1987), 329--342.
\bibitem{Milnor06} John Milnor,
{\em Dynamics in One Complex Variable}.
Third edition. Annals of Mathematics Studies 160.
Princeton University Press, Princeton, NJ, 2006.
\bibitem{Miniowitz82}
Ruth Miniowitz,
 Normal families of quasi\-meromorphic mappings.
{\em  Proc.\ Amer.\ Math.\ Soc.} 84 (1982), 35--43.
\bibitem{Nicks}
Daniel A.\ Nicks,
Wandering domains in quasiregular dynamics.
{\em  Proc.\ Amer.\ Math.\ Soc.}, 
posted on September 19, 2012, PII S 0002-9939(2012)11625-9 (to appear in print).
\bibitem{Rempe06}
Lasse Rempe,
Topological dynamics of exponential maps on their escaping sets.
{\em Ergodic Theory Dynam. Systems} 26 (2006), 1939--1975.
\bibitem{Reshetnyak89}
Yu.\ G.\ Reshetnyak,
{\em Space Mappings with Bounded Distortion}.
Translations of Mathe\-matical Monographs 73.
Amer.\ Math.\ Soc., Providence, RI, 1989.
\bibitem{Rickman80}
Seppo Rickman,
On the number of omitted values of entire quasi\-regular mappings.
{\em J.~Analyse Math.} 37 (1980), 100--117.
\bibitem{Rickman85}
Seppo Rickman,
The analogue of Picard's theorem for quasi\-regular mappings in dimension three.
{\em Acta Math.}
154 (1985), 195--242.
\bibitem{Rickman93}
Seppo Rickman,
{\em Quasiregular Mappings}.
Ergebnisse der Mathematik und ihrer Grenz\-gebiete (3) 26.
Springer-Verlag, Berlin, 1993.
\bibitem{Rippon1999}
P.~J.~Rippon and G.~M.~Stallard,
Iteration of a class of hyperbolic meromorphic functions.
{\em Proc.\ Amer.\ Math.\ Soc.}
127 (1999), 3251--3258.
\bibitem{Schleicher03a}
Dierk Schleicher,
Attracting dynamics of exponential maps.
{\em Ann.\ Acad.\ Sci.\ Fenn.\ Math.} 28 (2003), 3--34.
\bibitem{Siebert04}
Heike Siebert,
{\em  Fixpunkte und normale Familien quasi\-regul\"arer Abbildungen}.
Dissertation, University of Kiel,
 2004; http://e-diss.uni-kiel.de/diss\_1260.
\bibitem{Siebert06}
Heike Siebert,
Fixed points and normal families of quasi\-regular mappings.
{\em J.\ Anal.\ Math.}  98  (2006), 145--168.
\bibitem{Steinmetz93}
Norbert Steinmetz,
{\em Rational Iteration}.
De Gruyter Studies in Mathematics 16.
Walter de Gruyter \& Co., Berlin
1993.
\bibitem{Sun99}
Daochun Sun and Lo Yang,
Quasirational dynamical systems (Chinese).
{\em  Chinese Ann. Math. Ser. A}  20  (1999),  673--684.
\bibitem{Sun00}
Daochun Sun and Lo Yang,
Quasirational dynamic system.
{\em Chinese Science Bull.} 45 (2000), 1277--1279.
\bibitem{Sun01}
Daochun Sun and Lo Yang,
Iteration of quasi-rational mapping.
{\em Progr. Natur. Sci. (English Ed.)}  11  (2001),  16--25.
\bibitem{Wallin77} Hans Wallin,
Metrical characterization of conformal capacity zero.
{\em J.\ Math.\ Anal.\ Appl.} 58 (1977), 298--311.
\bibitem{Zorich67}
V. A. Zorich,
A theorem of M. A. Lavrent'ev on quasiconformal space maps.
{\em Math. USSR Sb.} 3 (1967), 389--403; Transl. of {\em Mat. Sb. (N.S.)} 74 (1967), 417--433 (in Russian).
\end{thebibliography}
\end{document}